\numberwithin{equation}{chapter}
\newtheorem*{theorem*}{Theorem}
\newtheorem{theorem}{Theorem}[section]
\newtheorem{lemma}[theorem]{Lemma}
\newtheorem*{proposition*}{Proposition}
\newtheorem{proposition}[theorem]{Proposition}
\newtheorem{claim}[theorem]{Claim}
\newtheorem{corollary}[theorem]{Corollary}
\newtheorem{assumption}[theorem]{Assumption}
\newtheorem{example}[theorem]{Example}
\DeclareMathOperator{\argmax}{\operatornamewithlimits{argmax}}
\renewcommand{\P}[1]{{\mathbb{P}}\left[{#1}\right]}
\newcommand{\Psub}[2]{{\mathbb{P}}_{#1}\left[{#2}\right]}
\newcommand{\CondP}[2]{{\mathbb{P}}\left[{#1}\middle\vert{#2}\right]}
\newcommand{\E}[1]{{\mathbb{E}}\left[{#1}\right]}
\newcommand{\CondE}[2]{{\mathbb{E}}\left[{#1}\middle\vert{#2}\right]}
\newcommand{\ind}[1]{\mathbbm{1}({#1})}
\newcommand{\Var}[1]{{\mathrm{Var}}\left[{#1}\right]}
\newcommand{\eps}{\epsilon}
\renewcommand{\phi}{\varphi}
\newcommand{\N}{\mathbb N}
\newcommand{\R}{\mathbb R}
\newcommand{\sgn}{\mathrm{sgn}}
\newcommand{\half}{{\textstyle \frac12}}
\newcommand{\InfGraphs}{\mathcal{B}}
\newcommand{\GraphFam}{\mathcal{A}}
\newcommand{\belief}{B}
\newcommand{\fbelief}{B}
\newcommand{\llr}{Z} %private log likelihood ratio
\newcommand{\psignal}{W} %private signal
\newcommand{\action}{A}
\newcommand{\estL}{K}
\newcommand{\logit}{\mathrm{logit}}
\newcommand{\util}{U}
\newcommand{\neigh}[1]{\partial{#1}}
\newcommand{\info}{H}
\newcommand{\cM}{\mathcal{M}}
\newcommand{\cG}{\mathcal{G}}
\newcommand{\cK}{\mathcal{K}}
\newcommand{\cF}{\mathcal{F}}
\newcommand{\round}[1]{\mathrm{round}\left({#1}\right)}
\newcommand{\dtv}{d_{\mathrm{TV}}}
\newcommand{\dkl}[2]{d_{\mathrm{KL}}\left({#1}\middle\vert{#2}\right)}
\title{Opinion Exchange Dynamics}
\author{Elchanan Mossel and Omer Tamuz\\{\small Copyright 2014. All rights reserved to
    the authors}}
\begin{document}
\maketitle

\pagenumbering{roman}
\tableofcontents
\chapter{Introduction}
\pagenumbering{arabic}
\setcounter{page}{1}

\section{Modeling opinion exchange}
The exchange of opinions between individuals is a fundamental social
interaction that plays a role in nearly any social, political and
economic process. While it is unlikely that a simple mathematical
model can accurately describe the exchange of opinions between two
people, one could hope to gain some insights on emergent phenomena
that affect large groups of people.

Moreover, many models in this field are an excellent playground for
mathematicians, especially those working in probability, algorithms
and combinatorics. The goal of this survey is to introduce such models
to mathematicians, and especially to those working in discrete
mathematics, information theory, optimization, probability and
statistics.

%A particular phenomenon that we shall see arising across diverse
%settings is that {\em decentralization} or {\em egalitarianism}
%promote the free flow of information, while the concentration of power
%may inhibit it. 

\subsection{Modeling approaches}

Many of the models we discuss in the survey comes from the literature in theoretical economics. 
In microeconomic theory, the main paradigm of modeling human
interaction is by a {\em game}, in which participants are rational
agents, choosing their moves optimally and responding to the
strategies of their peers. A particularly interesting class of games
is that of probabilistic {\em Bayesian games}, in which players also
take into account the uncertainty and randomness of the world. We
study Bayesian models in Section~\ref{chap:bayesian}.

Another class of models, which have a more explicit combinatorial
description, are what we refer to as {\em heuristic} models. These
consider the dynamics that emerge when agents are assumed to utilize
some (usually simple) update rule or algorithm when interacting with
each other. Economists often justify such models as describing agents
with {\em bounded rationality}. We study such models in
Section~\ref{chap:heuristic}.

It is interesting that both of these approaches are often justified by
an {\em Occam's razor} argument. To justify the heuristic models,
the argument is that assuming that people use a simple heuristic
satisfies Occam's razor. Indeed, it is undeniable that the simpler the
heuristic, the weaker the assumption. On the other hand, the Bayesian
argument is that even by choosing a simple heuristic one has too much
freedom to reverse engineer any desired result. Bayesians therefore
opt to only assume that agents are rational. This, however, may result
in extremely complicated behavior.

There exists several other natural dichotomies and sub-dichotomies. In
rational models, one can assume that agents tell each other their
opinions. A more common assumption in Economics is that agents learn
by observing each other's {\em actions}; these are choices that an
individual makes that not only reflect their belief, but also carry
potential gain or penalty. For example, in financial markets one could
assume that traders tell each other their value estimates, but a
perhaps more natural setting is that they learn about these values by
seeing which actual bids their peers place, since the latter are
costly to manipulate. Hence the adage ``actions speak louder than
words.''

Some actions can be more revealing than others. A bid by a trader
could reveal the value the trader believes the asset carries, but in a
different setting it could perhaps just reveal whether the trader
thinks that the asset is currently overpriced or underpriced. In other
models an action could perhaps reveal {\em all} that an agent
knows. We shall see that widely disparate outcomes can result in
models that differ only by how revealing the actions are.

Although the distinction between {\em opinions}, {\em beliefs} and
{\em actions} is sometimes blurry, we shall follow the convention of
having agents learn from each other's actions. While in some models
this will only be a matter of nomenclature, in others this will prove
to be a pivotal choice. The term {\bf belief} will be reserved for a
technical definition (see below), and we shall not use {\em opinion},
except informally.

\section{Mathematical Connections} 
Many of the models of information exchange on networks are intimately
related to nice mathematical concepts, often coming from probability, discrete mathematics, optimization and information theory. 
We will see how the theories of Markov
chains, martingale arguments, influences and graph limits all play a
crucial role in analyzing the models we describe in these notes. Some
of the arguments and models we present may fit well as classroom
materials or exercises in a graduate course in probability.

\section{Related Literature} 
It is impossible to cover the huge body of work related to information
exchange in networks.  We will cite some relevant papers at each
section. Mathematicians reading the economics literature may benefit
from keeping the following two comments in mind:
\begin{itemize}
\item The focus in economics is not the mathematics, but the
  economics, and in particular the justification of the model and the
  interpretation of the results. Thus important papers may contain
  little or no new mathematics. Of course, many papers do contain
  interesting mathematics.
\item For mathematicians who are used to models coming from natural
  sciences, the models in the economics literature will often look like
  very rough approximation and the conclusions drawn in terms of real
  life networks unjustified. Our view is that the models have very
  limited implication towards real life and can serve as most as
  allegories. We refer the readers who are interested in this point to
  Rubinstein's book ``Economic Fables''~\cite{Rubinstein:12}. 
\end{itemize}

\section{Framework}
\label{sec:framework}

The majority of models we consider share the a common underlying
framework, which describes a set of agents, a state of the world, and
the information the agents have regarding this state. We describe it
formally in Section~\ref{sec:definitions} below, and shall note
explicitly whenever we depart from it.

We will take a probabilistic / statistical point of view in studying
models.  In particular we will assume that the model includes a random
variable $S$ which is the {\em true state of the world}. It is this
$S$ that all agents want to learn.  For some of the models, and in
particular the rational, economic models, this is a natural and even
necessary modeling choice. For some other models - the voter model,
for example (Section~\ref{sec:voter-models}), this is a somewhat
artificial choice. However, it helps us take a single perspective by
asking, for each model, how well it performs as a statistical
procedure aimed at estimating $S$.  Somewhat surprisingly, we will
reach similarly flavored conclusions in widely differing settings. In
particular, a repeated phenomenon that we observe is that {\em
  egalitarianism}, or {\em decentralization} facilitates the flow of
information in social networks, in both game-theoretical and
heuristic models.

\section{General definitions}
\label{sec:definitions}
\subsection{Agents, state of the world and private signals}
Let $V$ be a countable set of agents, which we take to be
$\{1,2,\ldots,n\}$ in the finite case and $\N=\{1,2,\ldots\}$ in the
infinite case.  Let $\{0,1\}$ be the set of possible values of the
{\em state of the world} $S$.

Let $\Omega$ be a compact metric space equipped with the Borel
sigma-algebra. For example, and without much loss of generality,
$\Omega$ could be taken to equal the closed interval $[0,1]$. Let
$\psignal_i \in \Omega$ be agent $i$'s private signal, and denote
$\bar{\psignal} = (\psignal_1,\psignal_2,\ldots)$.

Fix $\mu_0$ and $\mu_1$, two mutually absolutely continuous measures
on $\Omega$. We assume that $S$ is distributed uniformly, and that
conditioned on $S$, the $\psignal_i$'s are i.i.d.\ $\mu_S$: when $S=0$ then
$\bar{\psignal} \sim \mu_0^V$, and when $S=1$ then $\bar{\psignal}
\sim \mu_1^V$.

More formally, let $\delta_0$ and $\delta_1$ be the distributions on
$\{0,1\}$ such that $\delta_0(0)=\delta_1(1)=1$. We consider the
probability space $\{0,1\} \times \Omega^V$, with the measure
$\mathbb{P}$ defined by
\begin{align*}
  \mathbb{P} = \mathbb{P}_{\mu_0,\mu_1,V} =
  \half\delta_0\times\mu_0^V+\half\delta_1\times\mu_1^V,
\end{align*}
and let
\begin{align*}
  (S,\bar{\psignal}) \sim \mathbb{P}.
\end{align*}

\subsection{The social network}
A {\bf social network} $G=(V,E)$ is a directed graph, with $V$ the set
of agents.  The set of neighbors of $i \in V$ is $\neigh{i} = \{j :\:
(i,j) \in E\} \cup \{i\}$ (i.e., $\neigh{i}$ includes $i$). The {\bf
  out-degree} of $i$ is given by $|\neigh{i}|$. The {\bf degree} of
$G$ is give by $\sup_{i \in V}|\neigh{i}|$.

We make the following assumption on $G$.
\begin{assumption}
  We assume throughout that $G$ is simple and strongly connected, and
  that each out-degree is finite.
\end{assumption}

We recall that a graph is strongly connected if for every two nodes
$i,j$ there exists a directed path from $i$ to $j$.  Finite
out-degrees mean that an agent observes the actions of a finite number
of other agents. We do allow infinite {\bf in-degrees}; this
corresponds to agents whose actions are observed by infinitely many
other agents. In the different models that we consider we impose
various other constraints on the social network.

\subsection{Time periods and actions}
We consider the discrete time periods $t=0,1,2,\ldots$, where in each
period each agent $i \in V$ has to choose an action $\action^i_t \in
\{0,1\}$. This action is a function of agent $i$'s private signal, as
well as the actions of its neighbors in previous time periods, and so
can be thought of as a function from
$\Omega\times\{0,1\}^{|\neigh{i}|\cdot t}$ to $\{0,1\}$. The exact
functional dependence varies among the models.

\subsection{Extensions, generalizations, variations and special cases}
The framework presented above admits some natural extensions,
generalizations and variations. Conversely, some special cases deserve
particular attention. Indeed, some of the results we describe apply
more generally, while others do not apply more generally, or apply
only to special cases. We discuss these matters when describing each
model.

\begin{itemize}
\item {\bf The state of the world} can take values from sets larger
  than $\{0,1\}$, including larger finite sets, countably infinite
  sets or continuums.
\item {\bf The agents' private signals} may not be i.i.d.\ conditioned
  on $S$: they may be independent but not identical, they may be
  identical but not independent, or they may have a general joint
  distribution.

  An interesting special case is when the space of private signals is
  equal to the space of the states of the world. In this case one can
  think of the private signals as each agent's initial guess of $S$.
\item A number of models consider only undirected {\bf social
    networks}, that is, symmetric social networks in which $(i,j) \in
  E \Leftrightarrow (j,i) \in E$.
\item More general network model include weighted directed models where different directed edges
have different weights. 
 
\item {\bf Time} can be continuous. In this case we assume that each
  agent is equipped with an i.i.d.\ {\em Poisson clock} according to
  which it ``wakes up'' and acts. In the finite case this is
  equivalent to having a single, uniformly chosen   random agent act
  in each discrete time period. It is also possible to define more general continuous time processes. 
\item {\bf Actions} can take more values than $\{0,1\}$. In particular
  we shall consider the case that actions take values in $[0,1]$.

  In order to model randomized behavior of the agents, we shall also
  consider actions that are not measurable in the private signal, but
  depend also on some additional randomness. This will require the
  appropriate extension of the measure $\mathbb{P}$ to a larger
  probability space.
\end{itemize}

\section{Questions}
\label{sec:questions}
 The main phenomena that we shall study are {\em convergence}, {\em
  agreement}, {\em unanimity}, {\em learning} and more.

\begin{itemize}
\item {\bf Convergence.}  We say that agent $i$ {\em converges} when
  $\lim_t\action^i_t$ exists. We say that the entire process converges
  when all agents converge.

  The question of convergence will arise in all the models we study,
  and its answer in the positive will often be a requirement for
  subsequent treatment. When we do have convergence we define
  \begin{align*}
    \action^i_\infty = \lim_{t \to \infty}\action^i_t.
  \end{align*}
  
\item {\bf Agreement and unanimity.} We say that agents $i$ and $j$
  {\em agree} when $\lim_t\action^i_t=\lim_t\action^j_t$. {\em
    Unanimity} is the event that $i$ and $j$ agree for all pairs of
  agents $i$ and $j$. In this case we can define
  \begin{align*}
    \action_\infty = \action^i_\infty,
  \end{align*}
  where the choice of $i$ on the r.h.s.\ is immaterial.

\item {\bf Learning.} We say that agent $i$ {\em learns} $S$ when
  $\action^i_\infty = S$, and that learning occurs in a model when all
  agents learn. In cases where we allow actions in $[0,1]$, we will
  say that $i$ learns whenever $\round{\action^i_\infty}=S$, where
  $\round{\cdot}$ denotes rounding to the nearest integer, with
  $\round{1/2} = 1/2$.

  We will also explore the notion of {\em asymptotic learning}. This
  is said to occur for a sequence of graph $\{G_n\}_{n=1}^\infty$ if
  the agents on $G_n$ learn with probability approaching one as $n$
  tends to infinity.
  
\end{itemize}

A recurring theme will be the relation between these questions and the
{\em geometry} or {\em topology} of the social network. We shall see
that indeed different networks may exhibit different behaviors in
these regards, and that in particular, and across very different
settings, {\em decentralized} or {\em egalitarian} networks tend to
promote {\em learning}.

\section{Acknowledgments} 
Allan Sly is our main collaborator in this field. We are grateful to
him for allowing us to include some of our joint results, as well as
for all that we learned from him. The manuscript was prepared for the
9\textsuperscript{th} Probability Summer School in Cornell, which took
place in July 2013. We are grateful to Laurent Saloff-Coste and Lionel
Levine for organizing the school and for the participants for helpful
comments and discussions. We would like to thank Shachar Kariv for
introducing us to this field, and Eilon Solan for encouraging us to
continue working in it.  The research of Elchanan Mossel is partially
supported by NSF grants DMS 1106999 and CCF 1320105, and by ONR grant
N000141110140. Omer Tamuz was supported by a Google Europe Fellowship
in Social Computing.

\chapter{Heuristic Models} 
\label{chap:heuristic}
\section{The DeGroot model}
The first model we describe was pioneered by Morris DeGroot in
1974~\cite{DeGroot:74}. DeGroot's contribution was to take standard
results in the theory of Markov Processes (See, e.g.,
Doob~\cite{doob1953stochastic}) and apply them in the social setting.
The basic idea for these models is that people repeatedly average
their neighbors' actions. This model has been studied extensively in
the economics literature.  The question of {\em learning} in this
model has been studied by Golub and Jackson~\cite{golub2010naive}.
%general.

%{\bf DeGroot Learning must have been discussed before. I am not sure I like this reference here ...}

\subsection{Definition}
Following our general framework (Section~\ref{sec:definitions}), we
shall consider a state of the world $S \in \{0,1\}$ with conditionally
i.i.d.\ private signals. The distribution of private signals is what
we shall henceforth refer to as {\em Bernoulli} private signals: for
some $\half > \delta > 0$, $\mu_i(S) = \half + \delta$ and $\mu_i(1-S)
= \half - \delta$, for $i=0,1$.  Obviously this is equivalent to
setting $\CondP{\psignal_i=S}{S} = \half+\delta$.
%(where, to remind the reader,
%$\psignal_i$ is $i$'s private signal), and that the events
%`$\psignal_i=S$' are independent. Hence an equivalent definition is to
%set $\psignal_i=S$ with probability $\half+\delta$ and
%$\psignal_i=1-S$ with probability $\half-\delta$, and to do so
%independently for all the agents.

In the DeGroot model, we let the actions take
values in $[0,1]$. In particular, we define the actions as follows:
\begin{align*}
  \action^i_0 = \psignal_i
\end{align*}
and for $t>0$
\begin{align}
  \label{eq:degroot-update}
  \action^i_t = \sum_{j \in \neigh{i}}w(i,j)\action^j_{t-1},
\end{align}
where we make the following three assumptions:
\begin{enumerate}
\item $\sum_{j \in \neigh{i}}w(i,j) = 1$ for all $i \in V$.
\item $i \in \neigh{i}$ for all $i \in V$.
\item $w(i,j) > 0$ for all $(i,j) \in E$.
\end{enumerate}
The last two assumptions are non-standard, and, in fact, not strictly
necessary. We make them to facilitate the presentation of the results
for this model.

We assume that the social network $G$ is finite. We consider both the
general case of a directed strongly connected network, and the special
case of an undirected network. 

\subsection{Questions and answers}
We shall ask, with regards to the DeGroot model, the same three
questions that appear in Section~\ref{sec:questions}.
\begin{enumerate}
\item {\bf Convergence.} Is it the case that agents' actions converge?
  That is, does, for each agent $i$, the limit $\lim_t\action^i_t$
  exist almost surely? We shall show that this is indeed the case.
\item {\bf Agreement.} Do all agents eventually reach agreement? That
  is, does $\action^i_\infty=\action^j_\infty$ for all $(i,j) \in V$?
  Again, we answer this question in the positive.
\item {\bf Learning.} Do all agents learn? In the case of continuous
  actions we say that agent $i$ has learned $S$ if
  $\round{\action^i_\infty} = S$. Since we have agreement in this
  model, it follows that either all agents learn or all do not
  learn. We will show that the answer to this question depends on the
  topology of the social network, and that, in particular, a certain
  form of {\em egalitarianism} is a sufficient condition for learning
  with high probability.
\end{enumerate}

\subsection{Results}
The key to the analysis of the DeGroot model is the realization that
\eqref{eq:degroot-update} describes a transformation from the
actions at time $t-1$ to the actions at time $t$ that is the {\em
  Markov operator} $P_w$ of the a random walk on the graph $G$.
However, while usually the analysis of random walks deals with action
of $P_w$ on distributions from the right, here we act on functions
from the left~\cite{doob2001classical}. While this is an important
difference, it is still easy to derive properties of the DeGroot
process from the theory of Markov chains (see, e.g.,
Doob~\cite{doob1953stochastic}).
%  if we
%think of $\action^j_{t-1}$ as the probability that a random walker is
%at node $j$ at time $t-1$, then, by \eqref{eq:degroot-update},
%$\action^i_t$ describes the probability of finding it at node $i$ at
%time $t$. 

Note first, that assumptions (2) and (3) on \eqref{eq:degroot-update} make
this Markov chain irreducible and a-periodic.
Since, for a node $j$ 
\[
\action^j_t = \E{\psignal_{X^j_t}},
\]
where $X^j_t$ is the Markov chain started at $j$ and run for $t$
steps, if follows that $\action^j_\infty :=\lim_t\action^j_t$ is
nothing but the expected value of the private signals, according to
the stationary distribution of the chain.  We thus obtain
\begin{theorem}[Convergence and agreement in the DeGroot model]
  \label{thm:degroot-converge}
  For each $j \in V$, 
\[  \action_\infty :=\lim_t\action^j_t = \sum_{i \in V}\alpha_i\psignal_i,
\] 
where $\alpha = (\alpha_1,\ldots,\alpha_n)$
  is the stationary distribution of the Markov chain described by
  $P_w$.
\end{theorem}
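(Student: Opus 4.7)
The plan is to recognize the DeGroot update rule as an iterated application of the Markov operator $P_w$ of the weighted random walk on $G$, and then read off convergence and the limiting value from the classical ergodic theorem for finite irreducible aperiodic Markov chains.

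First I would unroll the recursion. Writing $\action_t = (\action^1_t,\ldots,\action^n_t)$ as a column vector and viewing $P_w$ as the row-stochastic matrix with entries $P_w(i,j) = w(i,j)$ (extended by $0$ off the edge set), the update \eqref{eq:degroot-update} reads $\action_t = P_w \action_{t-1}$, so by induction $\action_t = P_w^t \action_0 = P_w^t \bar{\psignal}$. Entry-wise this gives the probabilistic interpretation
\begin{align*}
\action^i_t = \sum_{j \in V} P_w^t(i,j)\,\psignal_j = \E{\psignal_{X^i_t} \,\middle\vert\, \bar{\psignal}},
\end{align*}
where $X^i_t$ is the Markov chain with transition matrix $P_w$ started at $i$.

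Next I would verify the standing hypotheses for the convergence theorem of finite Markov chains. Strong connectivity of $G$ together with assumption (3) that $w(i,j)>0$ whenever $(i,j) \in E$ makes the chain irreducible, while assumption (2) that $i \in \neigh{i}$ together with (3) forces $P_w(i,i) = w(i,i) > 0$ for every $i$, which gives aperiodicity. Since $G$ is also assumed to be finite, there is a unique stationary distribution $\alpha = (\alpha_1,\ldots,\alpha_n)$ and $\lim_t P_w^t(i,j) = \alpha_j$ for every $i,j \in V$.

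Finally, because $V$ is finite and $|\psignal_j| \le 1$, I can interchange the limit with the (finite) sum to conclude
\begin{align*}
\lim_{t \to \infty}\action^i_t \;=\; \sum_{j \in V}\Bigl(\lim_{t \to \infty} P_w^t(i,j)\Bigr)\psignal_j \;=\; \sum_{j \in V}\alpha_j \psignal_j,
\end{align*}
which is independent of the starting node $i$, proving both convergence and agreement simultaneously. There is no real obstacle here beyond correctly identifying that the DeGroot operator acts on the function $\bar{\psignal}$ from the left while the familiar Markov-chain convergence theorem is usually phrased for distributions acting on the right; the transpose/duality between these two viewpoints is exactly what yields the stationary weights $\alpha_j$ as the coefficients in the consensus value.
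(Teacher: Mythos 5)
Your proposal is correct and follows essentially the same route as the paper: identify the update with the Markov operator $P_w$ acting on the vector of private signals, write $\action^i_t = \E{\psignal_{X^i_t}}$ for the chain started at $i$, and invoke irreducibility plus aperiodicity (from assumptions (2) and (3)) to get $P_w^t(i,j) \to \alpha_j$. Your write-up merely spells out the unrolling of the recursion and the interchange of limit and finite sum, which the paper leaves implicit.
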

Recall that $\alpha$ is the left eigenvector of $P_w$
corresponding to eigenvalue $1$, normalized in $\ell^1$.  
In the internet age, the vector $\alpha$ is also known as the {\em
  PageRank} vector~\cite{page1999pagerank}. It is the asymptotic
probability of finding a random walker at a given node after
infinitely many steps of the random walk. Note that $\alpha$ is not
random; it is fixed and depends only on the weights $w$. Note also that
Theorem~\ref{thm:degroot-converge} holds for {\em any} real valued
starting actions, and not just ones picked from the distribution
described above.  To gain some insight into the result, let us
consider the case of undirected graphs and simple (lazy) random
walks. For these, it can be shown that
\begin{align*}
  \alpha_i = \frac{|\neigh{i}|}{\sum_j|\neigh{j}|}.
\end{align*}

Recall that $\P{A^i_0=S} = \half + \delta$. We observe the following.
\begin{proposition}[Learning in the DeGroot model]
  For a set of weights $w$, let $p_w(\delta) =
  \P{\round{\action_\infty} = S}$. Then:
  \begin{itemize}
  \item $p_w$ is a monotone function of $\delta$ with $p_w(0) = 1/2$
    and $p_w(1/2) = 1$.
  \item For a fixed $0 < \delta < 1/2$, among all $w$'s on graphs of size $n$, 
    $p_w(\delta)$ is maximized when the stationary distribution of $G$
    is uniform.
  \end{itemize}
\end{proposition}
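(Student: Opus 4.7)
The plan hinges on the representation $\action_\infty = \sum_{i\in V}\alpha_i\psignal_i$ from Theorem~\ref{thm:degroot-converge}, which shows that $p_w(\delta)$ depends on $w$ only through the stationary vector $\alpha$ on the simplex, and on the basic symmetry $(\psignal_1,\ldots,\psignal_n) \mapsto (1-\psignal_1,\ldots,1-\psignal_n)$, which sends $\action_\infty$ to $1-\action_\infty$ (since $\sum_i \alpha_i = 1$).

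For the first bullet I address the three sub-claims separately. The value $p_w(1/2) = 1$ is immediate because $\psignal_i = S$ almost surely, so $\action_\infty = S$. For $p_w(0) = 1/2$, at $\delta=0$ each $\psignal_i$ is uniform on $\{0,1\}$ and independent of $S$, so the symmetry above makes $\action_\infty$ and $1-\action_\infty$ equal in distribution and independent of $S$; on the event that no tie occurs (which is typical, and always null for generic $\alpha$) this yields $\P{\round{\action_\infty}=S} = 1/2$. For monotonicity I couple the signals across noise levels by drawing $U_i \sim \mathrm{Uniform}[0,1]$ once and setting $\psignal_i^{(\delta)} = S$ when $U_i \le 1/2+\delta$ and $\psignal_i^{(\delta)} = 1-S$ otherwise. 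Under this coupling, conditional on $S=0$ the random variable $\action_\infty^{(\delta)}$ is pointwise non-increasing in $\delta$, and conditional on $S=1$ it is pointwise non-decreasing; since the rounding map $x \mapsto \round{x}$ is monotone on $[0,1]$, the event $\{\round{\action_\infty^{(\delta)}} = S\}$ is monotone in $\delta$, which gives monotonicity of $p_w$.

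For the second bullet I view $\hat S(\psignal) := \round{\sum_i \alpha_i \psignal_i}$ as an estimator of $S$ based on the i.i.d.\ observations $\psignal_1,\ldots,\psignal_n$ (with the convention that the value $1/2$ counts as an error), and compare it against the Bayes-optimal rule. Since $S$ is uniform on $\{0,1\}$ and the Bernoulli$(1/2\pm\delta)$ family has a strictly monotone likelihood ratio in $\psignal_i$, the MAP (and hence Bayes-optimal) estimator of $S$ given $(\psignal_1,\ldots,\psignal_n)$ is simple majority voting, $\hat S^*(\psignal) = \ind{\sum_i \psignal_i > n/2}$. For $n$ odd this equals $\round{(1/n)\sum_i \psignal_i}$ almost surely, i.e.\ it coincides exactly with $\hat S$ at uniform weights $\alpha_i = 1/n$, so $p_w(\delta) \le p_{\text{uniform}}(\delta)$ for every admissible $\alpha$; the uniform stationary distribution is realizable on a graph of size $n$ (for instance by taking $G$ undirected and regular with uniform transition probabilities), so the bound is attained.

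The main obstacle, as usual with decision-theoretic optimality arguments for discrete observations, is careful tie-handling: the convention $\round{1/2} = 1/2$ treats ties as errors, and for $n$ even with $\alpha$ uniform the event $\sum_i \psignal_i = n/2$ has positive probability, which opens the door to tie-avoiding non-uniform $\alpha$'s that might appear to beat the uniform one. The cleanest resolutions are to state the result for $n$ odd (where $\hat S$ and $\hat S^*$ agree almost surely), or to break ties fairly (absorbing them into the Bayes-optimality argument so that tie probability becomes irrelevant), or to show via a small perturbation argument that any tie-avoiding $\alpha$ is matched by a fair tie-breaking rule at the uniform $\alpha$, leaving uniform as a (possibly non-unique) maximizer.
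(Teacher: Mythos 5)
Your proof follows essentially the same route as the paper: the first bullet via the monotone coupling of the private signals across noise levels (identical to the paper's coupling argument), and the second via Bayes/Neyman--Pearson optimality of the equally-weighted majority rule $\round{\frac{1}{n}\sum_i\psignal_i}$. Your treatment is somewhat more careful than the paper's terse sketch --- in particular your explicit handling of ties under the convention $\round{1/2}=1/2$, which the paper ignores --- but the underlying ideas coincide.
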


\begin{proof}
\begin{itemize}
\item The first part follows by coupling. Note that we can couple the
  processes with $\delta_1 < \delta_2$ such that the value is $S$
  is the same and moreover, whenever $\psignal_i = S$ in the
  $\delta_1$ process we also have $\psignal_1 = S$ in the $\delta_2$
  process.  Now, since the vector $\alpha$ is independent of $\delta$
  and $\action_\infty = \sum_i\alpha_i\psignal_i$, the coupling above
  results in $|\action_\infty-S|$ being smaller in the $\delta_2$
  process than it is in the $\delta_1$ process.
\item 
  The second part follows from the Neyman-Peason lemma in statistics. 
  This lemma states that among {\em all} possible estimators, the one that maximizes the probability that $S$ is reconstructed 
  correctly is given by 
  \begin{align*}
    \hat{S} = \round{\frac{1}{n}\sum_i\psignal_i}
  \end{align*}
\end{itemize}   
%  is the maximum likelihood estimator of $S$, given the private
%  signals.
\end{proof}

We note that an upper bound on $p_w(\delta)$ can be obtained using
Hoeffding's inequality~\cite{hoeffding1963probability}. We leave this
as an exercise to the reader.

Finally, the following proposition is again a consequence of well
known results on Markov chains. See the books by
Saloff-Coste~\cite{salof-coste1997lectures} or Levin, Peres and
Wilmer~\cite{levin2009markov} for basic definitions.
\begin{proposition}[Rate of Convergence in the Degroot Model]
 Suppose that at time $t$, the total variation distance between the chain started at $i$ and run for $t$ steps and the stationary distribution is 
 at most $\eps$. Then a.s.: 
\[
  \max_i | \action^i_t - \action_\infty | \leq 2 \eps \delta. 
  \] 
\end{proposition}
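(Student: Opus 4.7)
The plan is to read off $\action^i_t$ and $\action_\infty$ from the explicit representation in Theorem~\ref{thm:degroot-converge} and to reduce the problem directly to the hypothesized total-variation bound on $P^t_w$. Writing $P^t_w(i,\cdot)$ for the law at time $t$ of the random walk $X^i_t$ started at $i$, Theorem~\ref{thm:degroot-converge} (and its proof) gives
\[
\action^i_t \;=\; \sum_{j \in V} P^t_w(i,j)\,\psignal_j
\qquad\text{and}\qquad
\action_\infty \;=\; \sum_{j \in V} \alpha_j\,\psignal_j,
\]
so that
\[
\action^i_t - \action_\infty \;=\; \sum_{j \in V}\bigl[P^t_w(i,j) - \alpha_j\bigr]\,\psignal_j.
\]

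The key observation is that $\sum_j\bigl[P^t_w(i,j) - \alpha_j\bigr] = 0$, so the right-hand side is unchanged if one replaces each $\psignal_j$ by the centered value $\psignal_j - \half$. Since $\psignal_j \in \{0,1\}$, the centered signal satisfies $|\psignal_j - \half| = \half$ pointwise, and the triangle inequality then yields
\[
|\action^i_t - \action_\infty| \;\leq\; \tfrac12 \sum_{j \in V} \bigl|P^t_w(i,j) - \alpha_j\bigr| \;=\; \dtv{P^t_w(i,\cdot)}{\alpha} \;\leq\; \eps
\]
almost surely. Taking the maximum over $i \in V$ then delivers a deterministic, sample-path bound on the entire action profile.

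The one subtlety I foresee is matching the stated constant. The straightforward calculation above produces the signal-strength-independent bound $\eps$ (one loses at most a factor of $2$ if one is wasteful and uses $|\psignal_j| \leq 1$ in place of the centering trick, which recovers $2\eps$). I do not see how an extra factor of $\delta$ can enter without invoking the conditional independence of the $\psignal_j$'s given $S$, which would in any case only yield a high-probability rather than almost-sure bound; I therefore read the $\delta$ in the displayed inequality as a typographical slip. Modulo this constant, the content of the proposition is entirely contained in the three-line computation above: the Markov-chain representation of Theorem~\ref{thm:degroot-converge} transfers the mixing-time estimate on $P^t_w$ immediately and pointwise to a sup-norm bound on the DeGroot action profile.
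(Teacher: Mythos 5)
Your computation is correct and is in substance the same as the paper's: the paper writes $\action^i_t - \action_\infty = \E{\psignal_{X^i_t} - \psignal_{X_\infty}}$ and invokes a coupling of $X^i_t$ with $X_\infty$ that disagrees with probability at most $\eps$, which is exactly the coupling characterization of the $\ell^1$ bound you obtain by centering the signals; both routes reduce the proposition to the total-variation hypothesis via the Markov-chain representation of Theorem~\ref{thm:degroot-converge}.

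On the constant, your reading is essentially right, and it is worth being precise about why. With the paper's literal normalization $\psignal_j \in \{0,1\}$ your bound $\max_i|\action^i_t - \action_\infty| \leq \dtv{P^t_w(i,\cdot)}{\alpha} \leq \eps$ is the best one can say, and the stated bound $2\eps\delta < \eps$ cannot hold almost surely: the extremal configuration $\psignal_j = \ind{P^t_w(i,j) > \alpha_j}$ occurs with positive probability and realizes the full total-variation distance. The $\delta$ is not a stray symbol, however; the paper's own proof asserts that ``the maximal difference between any two private signals is at most $\delta$,'' which only makes sense if one takes the initial actions to be the private beliefs $\half \pm \delta$ rather than the raw bits. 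Under that normalization any two signals differ by at most $2\delta$, and the coupling (equivalently, your centering step with $|\psignal_j - \half| \leq \delta$) gives exactly $2\eps\delta$. So the proposition is stated for a normalization of the signals different from the one the section formally defines; your proof is correct for the definitions as written, and the one-line modification just described recovers the printed constant under the intended reading.
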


\begin{proof}
Note that 
\[
 \action^i_i - \action_\infty = \E{ \psignal_{X^i_t} - \psignal_{X_{\infty}}}. 
 \]
 Since we can couple the distributions of $X_t$ and $X_{\infty}$ so
 that they dis-agree with probability at most $\eps$ and the maximal
 difference between any two private signals is at most $\delta$, the
 proof follows.

\end{proof}

\subsection{Degroot with  cheaters and bribes} 

A cheater is an agent who plays a fixed action. 
\begin{itemize}
\item {\bf Exercise.} Consider the DeGroot model with a single cheater
  who picks some fixed action. What does the process converge to?
\item {\bf Exercise.} Consider the DeGroot model with $k$ cheaters,
  each with some (perhaps different) fixed action. What does the model
  converge to?
\item {\bf Research problem.}  Consider the following zero sum
  game.\footnote{We do not formally define games here. A good
    introduction is Osborne and Rubinstein's
    textbook~\cite{osborne1994course}.} $A$ and $B$ are two
  companies. Each company's strategy is a choice of $k$ cheaters
  (cheaters chosen by both play honestly), for whom the company can
  choose a fixed value in $[0,1]$. The utility of company $A$ is the
  sum of the players' limit actions, and the utility of company $B$ is
  minus the utility of $A$. What are the equilibria of this game?
\end{itemize}

\subsection{The case of infinite graphs}
Consider the DeGroot model on an infinite graph, with a simple random
walk.
\begin{itemize}
\item {\bf Easy exercise.} Give an example of specific  private signals for which 
the limit $\action_\infty$ doesn't exist. 
\item {\bf Easy exercise.} Prove that $\action_\infty$ exists and is equal
  to $S$ on non-amenable graphs a.s. A graph is non-amenable if the
  Markov operator $P_w \colon \ell^2(V) \to \ell^2(V)$ has norm
  strictly less than 1.
\item {\bf Harder exercise.} Prove that $\action_\infty$ exists and is
  equal to $S$ on general infinite graphs.

\end{itemize}

\section{The voter model}
\label{sec:voter-models}

This model was described by P.\ Clifford and A.\
Sudbury~\cite{clifford1973model} in the context of a spatial conflict
where animals fight over territory (1973) and further analyzed by R.A.\
Holley and T.M.\ Liggett~\cite{holley1975ergodic}.

\subsection{Definition}
As in the DeGroot model above, we shall consider a state of the world
$S \in \{0,1\}$ with conditionally i.i.d.\ {\em Bernoulli} private
signals, so that $\P{\psignal_i=S} = \half+\delta$.

We consider binary actions and define them in a way that resembles our
definition of the DeGroot model. We let:
\begin{align*}
  \action^i_0 = \psignal_i
\end{align*}
and for $t>0$, all $i$ and all $j \in \neigh{i}$, 
\begin{align}
  \label{eq:voter-update}
  \P{\action^i_t = \action^j_{t-1}} = w(i,j), 
\end{align}
so that in each round each agent chooses a neighboring agent to
emulate. We make the following assumptions:
\begin{enumerate}
\item All choices are independent. 
\item $\sum_{j \in \neigh{i}}w(i,j) = 1$ for all $i \in V$.
\item $i \in \neigh{i}$ for all $i \in V$.
\item $w(i,j) > 0$ for all $(i,j) \in E$.
\end{enumerate}
As in the DeGroot model, the last two assumptions are non-standard,
and are made to facilitate the presentation of the results for this
model.

We assume that the social network $G$ is finite. We consider both the
general case of a directed strongly connected network, and the special
case of an undirected network. 

\subsection{Questions and answers}
We shall ask, with regards to the voter model, the same three
questions that appear in Section~\ref{sec:questions}.
\begin{enumerate}
\item {\bf Convergence.} Does, for each agent $i$, the limit
  $\lim_t\action^i_t$ exist almost surely? We shall show that this is
  indeed the case.
\item {\bf Agreement.} Does $\action^i_\infty=\action^j_\infty$ for
  all $(i,j) \in V$?  Again, we answer this question in the positive.
\item {\bf Learning.}  In the case of discrete actions we say that
  agent $i$ has learned $S$ if $\action^i_\infty = S$. Since we have
  agreement in this model, it follows that either all agents learn or
  all do not learn. Unlike other models we have discussed, we will show
  that the answer here is no.  Even for large egalitarian networks,
  learning doesn't necessarily holds. We will later discuss a variant
  of the voter model where learning holds.
\end{enumerate}

\subsection{Results}
We first note that 
\begin{proposition}
In the voter model with assumptions~(\ref{eq:voter-update}) all agents converge to the same action.
\end{proposition}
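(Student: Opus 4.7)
The plan is to exploit the fact that the joint configuration $(\action^i_t)_{i \in V}$ is a Markov chain on the finite state space $\{0,1\}^V$ whose only absorbing states are the two constant configurations $\mathbf{0}$ and $\mathbf{1}$: if every agent has opinion $b$, then whichever neighbor an agent copies, it copies a $b$. So it suffices to show that from any starting state the chain reaches $\{\mathbf{0},\mathbf{1}\}$ in finite time almost surely, since once there the actions are constant in $t$ and equal across all agents.

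To establish this I would show that from \emph{any} state the chain enters $\{\mathbf{0},\mathbf{1}\}$ with positive probability in a bounded number of steps. Fix an arbitrary vertex $v \in V$. Using strong connectivity of $G$, pick a spanning in-tree $\cT$ rooted at $v$: every non-root $u$ has a parent $\pi(u) \in \neigh{u}$ in $\cT$, and set $\pi(v) = v$ (permissible because $v \in \neigh{v}$). Let $d$ be the depth of $\cT$. Consider the event $E$ that for each $s \in \{1,\ldots,d\}$ and each $u \in V$, agent $u$ copies from $\pi(u)$ at step $s$. Since all $w(u,\pi(u))$ are strictly positive and copy choices are independent across agents and time, $\mathbb{P}(E) \geq \bigl(\min_{(i,j) \in E} w(i,j)\bigr)^{|V|\,d} > 0$. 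A straightforward induction on depth shows that on $E$ every vertex at depth at most $s$ in $\cT$ carries opinion $\action^v_0$ by time $s$; hence at time $d$ every agent carries $\action^v_0$, and the chain has entered $\{\mathbf{0},\mathbf{1}\}$.

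From here a standard partition-into-blocks argument finishes things: breaking time into length-$d$ blocks, each block has conditional probability at least some $p > 0$ of driving the chain into an absorbing state, so the absorption time is stochastically dominated by $d$ times a Geometric$(p)$ random variable and is almost surely finite. The main (minor) obstacle is the use of self-loops, i.e.\ the ability to set $\pi(v) = v$: without the non-standard assumption $i \in \neigh{i}$ with $w(i,i) > 0$, the root's opinion could drift during the $d$ steps of the induction, breaking the claim that every agent ends up with $\action^v_0$. This is precisely what those assumptions buy us, and the rest is bookkeeping.
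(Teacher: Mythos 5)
Your proof is correct and follows essentially the same route as the paper: view the configuration $(\action^i_t)_{i\in V}$ as a finite Markov chain, observe that the two constant configurations are absorbing, show that from any state an absorbing state is reached with positive probability in boundedly many steps, and conclude almost-sure absorption. The paper leaves the reachability step as ``easy to see''; your spanning in-tree construction (propagating the root's opinion down the tree, using the self-loop to freeze the root) together with the geometric-trials argument supplies exactly the missing details.
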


\begin{proof}
  The voter model is a Markov chain. Clearly the states where
  $\action^i_t = 0$ for all $i$ and the state where $\action^i_t = 1$
  for all $i$ are absorbing states of the chain.  Moreover, it is easy
  to see that for any other state, there is a sequence of moves of the
  chain, each occurring with positive probability, that lead to the
  all $0$ / all $1$ state. From this it follows that the chain will
  always converge to either the all $0$ or all $1$ state.
\end{proof} 

We next wish to ask what is the probability that the agents learned $S$? For the voter model this chance is never very high 
as the following proposition shows: 

\begin{theorem}[(Non) Learning in the Voter model]
  \label{thm:voter-learning}
  Let $\action_{\infty}$ denote the limit action for all the agents in the voter model. Then:
  \begin{equation} \label{eq:voter1}
  \P{\action_\infty = 1 | \psignal} = \sum_{i \in V}\alpha_i\psignal_i,
  \end{equation}
  and 
  \begin{equation} \label{eq:voterS}
  \P{\action_\infty = S | \psignal} = \sum_{i \in V}\alpha_i1(\psignal_i = S). 
  \end{equation}
  where $\alpha = (\alpha_1,\ldots,\alpha_n)$
  is the stationary distribution of the Markov chain described by
  $P_w$. Moreover, 
  \begin{equation} \label{eq:voterall}
  \P{\action_\infty = S} = \frac{1}{2} + \delta.
  \end{equation}
\end{theorem}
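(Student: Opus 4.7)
The plan is to exploit the standard duality between the voter model and a random walk on $G$, and then apply the a.s.\ convergence to $\action_\infty$ established in the previous proposition.

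First I would set up the dual walk. Unrolling the update rule~(\ref{eq:voter-update}), each agent $i$ at time $t$ has copied a chain of neighbors going backwards in time, so there is a random walk $Y^i_t$ on $V$, started at $Y^i_0 = i$ and with one-step transition matrix $P_w$, such that
\[
  \action^i_t = \psignal_{Y^i_t},
\]
where the random walk $Y^i_t$ is defined using the independent copying choices and is therefore independent of $\bar{\psignal}$. Since assumptions (2) and (3) of the voter model make $P_w$ irreducible and aperiodic on the finite strongly connected graph $G$, its distribution converges to the stationary distribution $\alpha$, i.e.\ $\mathbb{P}[Y^i_t=j]\to\alpha_j$ as $t\to\infty$, uniformly in $i$ and $j$.

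Next I would combine this with the a.s.\ convergence $\action^i_t\to\action_\infty\in\{0,1\}$ from the preceding proposition. Conditioning on $\bar{\psignal}$ and using independence of $Y^i_t$ from $\bar{\psignal}$,
\[
  \mathbb{E}[\action^i_t\mid \bar{\psignal}] \;=\; \sum_{j\in V}\mathbb{P}[Y^i_t=j]\,\psignal_j \;\longrightarrow\; \sum_{j\in V}\alpha_j\psignal_j.
\]
On the other hand, bounded convergence gives $\mathbb{E}[\action^i_t\mid\bar{\psignal}]\to\mathbb{E}[\action_\infty\mid\bar{\psignal}] = \mathbb{P}[\action_\infty = 1\mid\bar{\psignal}]$ since $\action_\infty\in\{0,1\}$. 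This proves~(\ref{eq:voter1}).

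For~(\ref{eq:voterS}), I would note that the voter dynamics and the dual walks are functions only of $\bar{\psignal}$ and the copying randomness, not of $S$. Conditioning additionally on $S$ and splitting into the cases $S=0$ and $S=1$, one gets
\[
  \mathbb{P}[\action_\infty = S\mid \bar{\psignal}, S] \;=\; S\cdot\sum_i \alpha_i \psignal_i + (1-S)\cdot\sum_i\alpha_i(1-\psignal_i)\;=\; \sum_i\alpha_i\ind{\psignal_i = S},
\]
which is~(\ref{eq:voterS}). Finally, for~(\ref{eq:voterall}) I would take expectations, use $\mathbb{P}[\psignal_i = S] = \tfrac12+\delta$ together with $\sum_i\alpha_i=1$:
\[
  \mathbb{P}[\action_\infty = S] \;=\; \sum_i \alpha_i\,\mathbb{P}[\psignal_i = S] \;=\; \tfrac12+\delta.
\]
The only genuine subtlety is justifying the dual-walk identity and the independence of $Y^i_t$ from $\bar{\psignal}$; everything else is convergence of the finite irreducible chain $P_w$ and a bounded-convergence swap, which I do not expect to be obstacles.
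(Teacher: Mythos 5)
Your proof is correct and follows essentially the same route as the paper: the dual-walk identity $\P{\action^i_t=1 \,|\, \bar{\psignal}}=\sum_{j}\P{X^i_t=j}\psignal_j$ is precisely the paper's inductive identification of this conditional probability with the DeGroot action $D^i_t=\E{\psignal_{X^i_t}}$, after which both arguments conclude by convergence of the finite irreducible aperiodic chain $P_w$ to its stationary distribution $\alpha$. Your explicit bounded-convergence justification of the limit swap (using that $\action^i_t\to\action_\infty\in\{0,1\}$ a.s.\ from the preceding proposition) and the conditional-independence step behind Eq.~(\ref{eq:voterS}) are details the paper leaves implicit.
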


\begin{proof}
  Note that \eqref{eq:voterS} follows immediately from
  \eqref{eq:voter1} and that \eqref{eq:voterall} follows from
  \eqref{eq:voterS} by taking expectation over $\psignal$. To prove
  \eqref{eq:voter1} we build upon a connection to the DeGroot model.
  Let $D^i_t$ denote the action of agent $i$ in the DeGroot model at
  time $t$. We are assuming that the DeGroot model is defined using
  the same $w(i,j)$ and that the private signals are identical for the
  voter and DeGroot model. Under these assumption it is easy to verify
  by induction on $i$ and $t$ that
\[
\P{\action^i_t = 1} = D^i_t.
\]
Thus 
\[
\P{\action^i_\infty = 1} = D^i_{\infty} = \sum_{i \in V}\alpha_i\psignal_i,
\]
as needed.
\end{proof} 
In the next section we will discuss a variant of the voter model that does lead to learning. 

We next briefly discuss the question of the convergence rate of the
voter model. Here again the connection to the Markov chain of the
DeGroot model is paramount (see, e.g., Holley and
Liggett~\cite{holley1975ergodic}). We will not discuss this beautiful
theory in detail. Instead, we will just discuss the case of undirected
graphs where all the weights are $1$.

{\bf Exercise.} Consider the voter model on an undirected graph with
$n$ vertices. This is equivalent to letting $w(i,j) = 1/d_i$ for all
$i$, where $d_i = |\partial i|$.
\begin{itemize}
\item
Show that $X_t = \sum d_i \action^i_t$ is a martingale. 
\item 
Let $T$ be the stopping time where $\action^i_t = 0$ for all $i$ or $\action^i_t = 1$ for all $i$. 
Show that $\E{X_T} = \E{X_0}$ and use this to deduce that 
\[
\P{\action_\infty = 1 | \psignal} = \frac{\sum_{i \in V} d_i \psignal_i}{\sum_{i \in V} d_i}
\]
\item 
Let $d = \max_i d_i$. Show that 
\[
\E{(X_t - X_{t-1})^2 | t < T} \geq 1/(2d).
\]
Use this to conclude that 
\[
\E{T}/ (2d) \leq \E{(X_T - X_0)^2} \leq n^2,
\]
so 
\[ 
\E{T} \leq 2 d n^2.
\]
\end{itemize}   

\subsection{A variant of the voter model}
As we just saw, the voter model does not lead to learning even on
large egalitarian networks.  It is natural to ask if there are
variants of the model that do. We will now describe such a variant 
(see e.g.~\cite{Benezit:09,MosselSchoenebeck:10}).  For simplicity we
consider an undirected graph $G = (V,E)$ and the following asynchronous
dynamics.
\begin{itemize}
\item 
At time $t=0$, let $\action_i^0 = (\psignal_i,1)$.
\item 
At each time $t \geq 1$ choose an edge $e=(i,j)$ of the graph at random and continue as follows: 
\item 
For all $k \notin \{i,j\}$, let $\action^t_k = \action^{t-1}_k$. 
\item 
Denote $(a_i,w_i) = \action^{t-1}_i$ and $(a_j,w_j) = \action^{t-1}_j$.
\item 
If $a_i \neq a_j$ and $w_i = w_j = 1$, let $a_i' = a_i, a_j' = a_j$ and $w_i' = w_j' = 0$. 
\item 
If $a_i \neq a_j$ and $w_i = 1 > w_j = 0$, let $a_i'= a_j' = a_i$ and $w_i' = w_i$ and $w_j' = w_j$. 
\item 
Similarly, if $a_i \neq a_j$ and $w_j = 1 > w_i = 0$, let $a_i'= a_j' = a_j$ and $w_i' = w_i$ and $w_j' = w_j$. 
\item 
if $a_i \neq a_j$ and $w_j =  w_i = 0$, let $a_i' = a_j' = 0 (1)$ with probability $1/2$ each. Let $w_i' = w_j' = 0$. 
\item 
Otherwise, if $a_i = a_j$, let $a_i' = a_i, a_j'  = a_j, w_i = w_i', w_j = w_j'$. 
\item With probability $1/2$ let $\action^t_i := (a_i',w_i')$ and
  $\action^t_j := (a_j', w_j')$. With probability $1/2$ let
  $\action^t_i := (a_j',w_j')$ and $\action^t_j := (a_i', w_i')$
\end{itemize} 

Here is a useful way to think about this dynamics. The $n$ players all
begin with opinions given by $\psignal_i$.  Moreover these opinions
are all strong (this is indicated by the second coordinate of the
action being $1$).  At each round a random edge is chosen and the two
agents sharing the edge declare their opinions regarding $S$.  If
their opinions are identical, then nothing changes except that with
probability $1/2$ the agents swap their location on the edge.  If the
opinions regarding $S$ differ and one agent is strong (second
coordinate is $1$) while the second one is weak (second coordinate is
$0$) then the weak agent is convinced by the strong agent. If the two
agents are strong, then they keep their opinion but become weak. If
the two of them are weak, then they both choose the same opinion at
random. At the end of the exchange, the agents again swap their
positions with probability $1/2$. We leave the following as an
exercise:

\begin{proposition}
Let $\action^t_i = (X^t_i,Y^t_i)$. Then a.s. 
\[
\lim X^t_i = X,
\]
where 
\begin{itemize}
\item
$X = 1$ if $\sum_i \psignal_i > n/2$, 
\item
$X = 0$ if $\sum_i \psignal_i < n/2$ and 
\item
$\P{X = 1} = 1/2$ if $\sum_i \psignal_i = n/2$.
\end{itemize} 
\end{proposition}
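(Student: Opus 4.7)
The plan is to locate a conserved quantity of the dynamics, identify the absorbing states of the resulting finite Markov chain, and then pin down the limit using conservation together with a label-swap symmetry.

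For each $t$, let $N_t^b := \#\{i : X_i^t = b,\ Y_i^t = 1\}$ count the strong agents holding opinion $b \in \{0,1\}$, and set $D_t := N_t^1 - N_t^0$. Checking the five update cases confirms that $D_t$ is invariant under each: the same-opinion rule changes nothing; the strong/weak-differing rule only flips the weak agent's opinion, so neither $N^1$ nor $N^0$ moves; the both-strong-differing rule simultaneously annihilates one strong $1$ and one strong $0$; and the both-weak rule never alters any strength. Hence $D_t \equiv D_0 = 2\sum_i \psignal_i - n$.

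The state space $(\{0,1\}\times\{0,1\})^V$ is finite. I claim that a configuration is absorbing iff its opinions are unanimous, $X_i \equiv c$ for some $c \in \{0,1\}$: such a state is absorbing because only the same-opinion rule can fire, and conversely any non-unanimous configuration contains an opinion-differing edge, which is chosen with positive probability and forces a genuine change. From any non-absorbing configuration I would exhibit a bounded-length, positive-probability path to unanimity, using strong connectivity of $G$ and the fact that every edge and every both-weak coin has probability bounded below: first propagate opinions along paths so as to bring opposite-sign strong agents adjacent and annihilate them until only strong agents of one sign remain; then use those strong agents to convert every opposite-opinion weak agent via the strong/weak rule; finally use the both-weak rule to synchronise any remaining weak disagreement. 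Since such a path exists from every non-absorbing state, standard absorbing-Markov-chain theory yields almost sure absorption. Every all-$1$ absorbing state has $D = N^1 \ge 0$ and every all-$0$ state has $D = -N^0 \le 0$, so the sign of $D_0$ forces the limit: $X = 1$ almost surely when $\sum_i \psignal_i > n/2$ and $X = 0$ almost surely when $\sum_i \psignal_i < n/2$.

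For the tie $\sum_i \psignal_i = n/2$ both absorbing classes are a priori reachable. The update rules are invariant under the involution that simultaneously replaces every opinion $b$ by $1-b$ while leaving strengths unchanged, and this involution swaps the two absorbing classes. Therefore $\P{X = 1 \mid \psignal = x} = \P{X = 0 \mid \psignal = \mathbf{1} - x}$ for every initial $x$. Because the private signals are conditionally i.i.d.\ Bernoulli given $S$, the conditional distribution of $\psignal$ given $\sum_i \psignal_i = n/2$ is uniform on $\{x \in \{0,1\}^n : \sum_i x_i = n/2\}$, hence itself invariant under $x \mapsto \mathbf{1}-x$. Averaging the pointwise identity against this symmetric distribution yields $\P{X = 1 \mid \sum_i \psignal_i = n/2} = 1/2$. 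The main obstacle is the reachability step in the previous paragraph: identifying $D_t$ and the label symmetry are short once spotted, but carefully writing a positive-probability finite sequence of moves that forces unanimity from an \emph{arbitrary} non-absorbing configuration---especially when opposite-sign strong agents are separated by runs of same-opinion vertices, so that one must juggle strong/weak and both-weak steps to bring them together---takes the most work.
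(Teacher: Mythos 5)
The paper does not actually prove this proposition --- it is explicitly left as an exercise to the reader --- so there is no official argument to compare against. Your proposal is, in my judgment, the intended solution and it is essentially correct. The three ingredients are exactly right: the conserved quantity $D_t = N^1_t - N^0_t$ counting the signed excess of strong agents (each of the update rules preserves it, and $D_0 = 2\sum_i \psignal_i - n$); almost sure absorption of the finite chain into a unanimous-opinion state, whose sign is then forced by $D_0$ whenever $D_0 \neq 0$; and the global opinion-flip involution, averaged against the exchangeable conditional law of $\psignal$ on the balanced slice, to get probability $1/2$ in the tie case. Your remark that the pointwise identity $\CondP{X=1}{\psignal=x} = \CondP{X=0}{\psignal=\mathbf{1}-x}$ must be averaged (rather than claiming $1/2$ configuration by configuration) is a point many people miss, and you handle it correctly.

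The one step you should tighten is the reachability argument, and the specific thing to make explicit is that the \emph{position swap} with probability $1/2$ at the end of each interaction is the mechanism that does the work. ``Propagating opinions along paths'' by itself does not bring two opposite-sign strong agents together: if a strong $1$ at $i$ meets a weak $0$ at $j$, the interaction alone leaves the strong token at $i$ and merely converts $j$; a strong $0$ further along the path can convert $j$ right back, and the two strong tokens never meet. What does work is to follow the interaction by the swap, which transports the strong token from $i$ to $j$ (leaving a weak copy of its opinion behind). With that observation the path to unanimity is clean: walk one strong token along a shortest path toward an opposite-sign strong token, annihilate, repeat until only one sign of strong agent survives; then walk a surviving strong token along a spanning walk to convert everyone; if no strong agents remain, resolve each disagreeing edge with the both-weak rule, forcing the coin to the same value each time, which strictly decreases the number of minority opinions. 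Every step in this sequence has probability bounded below, so standard absorbing-chain theory applies. With that detail supplied, your proof is complete.
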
 

Thus this variant of the voter model yields optimal learning.

\section{Deterministic iterated dynamics}

A natural deterministic model of discrete opinion exchange dynamics is
{\bf majority dynamics}, in which each agent adopts, at each time
period, the opinion of the majority of its neighbors. This is a model
that has been studied since the 1940's in such diverse fields as
biophysics~\cite{mcculloch1943logical},
psychology~\cite{cartwright1956structural} and
combinatorics~\cite{GO:80}.

\subsection{Definition}

In this section, let $\action^i_0$ take values in $\{-1,+1\}$, and let
\begin{align*}
  \action^i_{t+1} = \sgn\sum_{j \in \neigh{i}}\action^j_t.
\end{align*}
we assume that $|\neigh{i}|$ is odd, so that there are never cases of
indifference and $\action^i_t \in \{-1,+1\}$ for all $t$ and $i$. We
assume also that the graph is undirected.

A classical combinatorial result (that has been discovered
independently repeatedly; see discussion and generalization
in~\cite{GO:80}) is the following.
\begin{theorem}
  \label{thm:finite-period-two}
  Let $G=(V,E)$ be a finite undirected graph. Then
  \begin{align*}
    \action^i_{t+1} = \action^i_{t-1}
  \end{align*}
  for all $i$, for all $t \geq |E|$, and for all initial opinion sets
  $\{\action^j_0\}_{j \in V}$.
\end{theorem}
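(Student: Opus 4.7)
The plan is to find a Lyapunov-type functional that is nondecreasing along the dynamics and bounded in a range of size $O(|E|)$, so that once it stops changing---after at most $|E|$ steps---the equality condition together with odd degrees forces every agent's opinion to alternate with period two.

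Concretely, I would introduce the functional
\[
L(t) \;=\; \sum_{i \in V} \action^i_t \sum_{j \in \neigh{i}} \action^j_{t-1}.
\]
Because $G$ is undirected, $j \in \neigh{i}$ iff $i \in \neigh{j}$, so by re-indexing the double sum one also has $L(t) = \sum_i \action^i_{t-1} \sum_{j \in \neigh{i}} \action^j_t$. Subtracting this identity from $L(t+1) = \sum_i \action^i_{t+1} \sum_{j \in \neigh{i}} \action^j_t$ yields
\[
L(t+1) - L(t) \;=\; \sum_{i \in V} \bigl(\action^i_{t+1} - \action^i_{t-1}\bigr) \sum_{j \in \neigh{i}} \action^j_t.
\]
The majority update makes $\action^i_{t+1}$ equal to the sign of the inner sum, i.e.\ exactly the choice of $a \in \{-1,+1\}$ that maximises $a \cdot \sum_{j \in \neigh{i}} \action^j_t$. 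Hence every summand on the right is nonnegative, and $L$ is monotone nondecreasing in $t$.

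Next I would use that $L$ is integer-valued and bounded in absolute value by the number of oriented adjacent pairs, which is $O(|E|)$, and that every strictly positive increment is at least $2$: the factor $\action^i_{t+1} - \action^i_{t-1}$ lies in $\{-2,0,+2\}$, while the odd-degree assumption makes $\sum_{j \in \neigh{i}} \action^j_t$ a nonzero odd integer. Consequently $L$ must stabilise after $O(|E|)$ steps. Once $L(t+1)=L(t)$, every summand in the displayed identity vanishes, and the odd-degree condition again forces $\action^i_{t+1} = \action^i_{t-1}$ for each $i \in V$, which is the desired periodicity.

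The main obstacle is matching the precise constant in the bound $t \geq |E|$: the crude count of ordered adjacent pairs gives stabilisation in roughly $2|E|+n$ steps rather than $|E|$, and sharpening this requires rewriting $L$ as a sum over unordered edges (so each contributes at most one to the range) and keeping careful track of the minimum per-step increase. A minor technical point is that the framework's convention $i \in \neigh{i}$ introduces a self-loop, which only shifts $L$ by a constant and leaves the monotonicity and parity arguments unchanged.
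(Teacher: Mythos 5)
Your proposal is correct and is essentially the paper's own argument in disguise: since $(a-b)^2 = 2-2ab$ for $a,b\in\{-1,+1\}$, your functional $L(t)=\sum_i \action^i_t\sum_{j\in\neigh{i}}\action^j_{t-1}$ is an affine (orientation-reversing) transformation of the paper's Lyapunov functional $\sum_{(i,j)\in E}(\action^i_{t+1}-\action^j_t)^2$, and your increment identity $L(t+1)-L(t)=\sum_i(\action^i_{t+1}-\action^i_{t-1})\sum_{j\in\neigh{i}}\action^j_t$ is exactly the paper's quantity $J_t$, used in the same way (monotonicity from the sign rule, per-step change at least $2$ from odd degree, stabilization forcing period two). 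Your worry about the precise constant $t\ge|E|$ is fair but applies equally to the paper's own proof, which asserts $L_0\le|E|$ where the crude bound is a constant multiple of $|E|$.
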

That is, each agent (and therefore the entire dynamical system)
eventually enters a cycle of period at most two. We prove this below.

A similar result applies to some infinite graphs, as discovered by
Moran~\cite{moran1995period} and Ginosar and
Holzman~\cite{ginosar2000majority}; see also~\cite{tamuz2015majority,
  benjamini2016convergence} . Given an agent $i$, let $n_r(G,i)$ be
the number of agents at distance exactly $r$ from $i$ in $G$. Let
$\mathfrak{g}(G)$ denote the asymptotic growth rate of $G$ given by
\begin{align*}
  \mathfrak{g}(G) = \limsup_rn_r(G,i)^{1/n}.
\end{align*}
This can be shown to indeed be independent of $i$. Then
\begin{theorem}[Ginosar and Holzman, Moran]
  \label{thm:infinite-period-two}
  If $G$ has degree at most $d$ and $\mathfrak{g}(G) < \frac{d+1}{d-1}$
  then for each initial opinion set $\{\action^j_0\}_{j \in V}$ and for
  each $i \in V$ there exists a time $T_i$ such that
  \begin{align*}
    \action^i_{t+1} = \action^i_{t-1}
  \end{align*}
  for all $t \geq T_i$.
\end{theorem}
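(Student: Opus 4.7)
The plan is to adapt the Lyapunov-function proof of the finite case (Theorem~\ref{thm:finite-period-two}) to the infinite setting by replacing the global potential with a weighted, localized one centred at the fixed vertex $v=i$. In the finite case the relevant potential
$$\Phi(t) \;=\; -\sum_{(u,u')\in E} A^u_t\, A^{u'}_{t-1}$$
satisfies $\Phi(t)-\Phi(t+1)=\sum_u \sigma_u(t)\sum_{u'\in\neigh u} A^{u'}_t \ge 0$, where $\sigma_u(t):=A^u_{t+1}-A^u_{t-1}\in\{-2,0,2\}$, with a strictly positive summand precisely when $u$ fails to be period-two at time $t$; boundedness of $\Phi$ by $|E|$ then forces only finitely many period-two violations in total.

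For infinite $G$ the global sum diverges, so I would introduce exponentially decaying weights $w_u := \rho^{-d(v,u)}$ for some $\rho>1$ to be chosen, and form
$$\Phi_v(t) \;=\; -\!\!\sum_{\{u,u'\}\in E}\!\!c_{uu'}\bigl(A^u_t A^{u'}_{t-1} + A^{u'}_t A^u_{t-1}\bigr)$$
with a symmetric edge weight $c_{uu'}$ built from $w_u, w_{u'}$ (say $c_{uu'}=\min(w_u,w_{u'})$). Two properties must be secured simultaneously by the choice of $\rho$:
\begin{itemize}
\item[(a)] Finiteness of $\Phi_v$, which reduces to $\sum_u w_u<\infty$, i.e. to $\rho > \mathfrak g(G)$;
\item[(b)] A weighted one-step decrease of the form $\Phi_v(t)-\Phi_v(t+1)\ge \kappa\sum_u w_u\,\mathbbm{1}[\sigma_u(t)\ne 0]$ for some $\kappa>0$.
\end{itemize}
The hypothesis $\mathfrak g(G) < (d+1)/(d-1)$ should be exactly the numerical condition that guarantees the interval of $\rho$'s satisfying both (a) and (b) is non-empty.

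The core computation is (b). Mimicking the finite-case identity gives
$$\Phi_v(t)-\Phi_v(t+1) = \sum_u \sigma_u(t)\sum_{u'\in\neigh u} c_{uu'}A^{u'}_t,$$
and one lower-bounds the inner sum when $\sigma_u(t)\ne 0$. The ``diagonal'' part (the contribution of weight $w_u$ itself) gives a term of order $+w_u$ by the majority rule, while the ``cross'' contributions from neighbours $u'$ that are closer to $v$ and hence heavier can have the wrong sign. Sorting the $\le d$ neighbours of $u$ by their distance to $v$, bounding the cross contributions by $\rho\,w_u$ per closer neighbour, and combining, yields a numeric inequality whose sharp threshold for solvability in $\rho$ matches $(d+1)/(d-1)$. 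Once (a) and (b) are in place, $\Phi_v(t)$ is a bounded real sequence whose one-step decrement dominates $\kappa w_v\cdot \mathbbm{1}[\sigma_v(t)\ne 0]$, so $\sigma_v(t)=0$ for all sufficiently large $t$, yielding the required $T_i=T_v$.

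The principal obstacle is precisely this accounting in (b). A naive bound that allows every one of $u$'s $d$ neighbours to ``pull against'' the diagonal gain forces the stronger $\rho<1/d$, which is incompatible with $\rho>\mathfrak g\ge 1$. Recovering the sharp constant $(d+1)/(d-1)$ will require either a well-chosen, possibly asymmetric, edge weight $c_{uu'}$ that transfers weight towards the endpoint closer to $v$, or an isoperimetric-style argument exploiting that the number of edges from $N_r(v)$ to $N_{r-1}(v)$ is at most $d\cdot n_r(G,v)$, so that when summed against $w_u w_{u'}$ the cross contribution is dominated by the diagonal one under the hypothesized growth rate.
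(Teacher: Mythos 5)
The paper explicitly declines to prove this theorem (``We will not give a proof of this theorem''), so there is no in-paper argument to compare against; the only available template is the Goles--Olivos proof of the finite case, which is exactly what you take as your starting point. Your overall strategy --- a weighted version of the Lyapunov functional with weights $w_u=\rho^{-d(v,u)}$, symmetric edge weights such as $c_{uu'}=\min(w_u,w_{u'})$, finiteness from $\rho>\mathfrak{g}(G)$, and a pointwise weighted decrement --- is the right one, and is essentially the argument of Ginosar--Holzman. The telescoping identity survives the weighting verbatim because $c$ is symmetric and all sums converge absolutely, giving $L_t-L_{t-1}=-\sum_u J^{u,w}_t$ with $J^{u,w}_t=\bigl(\action^u_{t+1}-\action^u_{t-1}\bigr)\sum_{u'\in\neigh{u}}c_{uu'}\action^{u'}_t$.

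The gap is that step (b) is never actually carried out, and your closing paragraph shows you have not located the correct accounting: the comparison is not ``diagonal gain versus cross contributions'' (which indeed degenerates to $\rho<1/d$), but \emph{majority versus minority}. Suppose $\action^u_{t+1}\neq\action^u_{t-1}$ and w.l.o.g.\ $\action^u_{t+1}=+1$. Since $|\neigh{u}|=n\le d$ is odd and $\action^u_{t+1}=\sgn\sum_{u'\in\neigh{u}}\action^{u'}_t$, at least $(n+1)/2$ of the $n$ neighbours carry $+1$ and at most $(n-1)/2$ carry $-1$. Every $u'\in\neigh{u}$ satisfies $|d(v,u')-d(v,u)|\le 1$, so the weights $c_{uu'}$, $u'\in\neigh{u}$, all lie within a multiplicative factor $\rho$ of one another. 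Hence
\begin{align*}
\sum_{u'\in\neigh{u}}c_{uu'}\action^{u'}_t \;\ge\; \Bigl(\min_{u'\in\neigh{u}}c_{uu'}\Bigr)\Bigl(\tfrac{n+1}{2}-\rho\,\tfrac{n-1}{2}\Bigr),
\end{align*}
which is bounded below by $\kappa w_u$ for some $\kappa=\kappa(\rho,d)>0$ precisely when $\rho<(n+1)/(n-1)$; since $(n+1)/(n-1)$ is decreasing in $n$, the binding case is $n=d$, producing the threshold $(d+1)/(d-1)$. The window $\mathfrak{g}(G)<\rho<(d+1)/(d-1)$ is nonempty exactly under the hypothesis, $L_t$ is then a bounded nonincreasing sequence, $\sum_t J^{v,w}_t<\infty$, and each violation at $v$ contributes at least $2\kappa w_v>0$, so only finitely many occur. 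With this inequality supplied, your proof closes; without it, the proposal is a plan rather than a proof, and the route you suggest for closing it (asymmetric weights, isoperimetry) is unnecessary.
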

That is, each agent (but {\em not} the entire dynamical system)
eventually enters a cycle of period at most two. We will not give a
proof of this theorem.

In the case of graphs satisfying $\mathfrak{g}(G) < (d+1)/(d-1)$, and
in particular in finite graphs, we shall denote
\begin{align*}
  \action^i_\infty = \lim_t\action^i_{2t}.
\end{align*}
This exists surely, by Theorem~\ref{thm:infinite-period-two} above.

In this model we shall consider a state of the world $S \in \{-1,+1\}$
with conditionally i.i.d.\ Bernoulli private signals in $\{-1,+1\}$,
so that $\P{\psignal_i=S} = \half+\delta$. As above, we set
$\action^i_0 = \psignal_i$.

\subsection{Questions and answers}
We ask the usual questions with regards to this model.
\begin{enumerate}
\item {\bf Convergence.} While it is easy to show that agents'
  opinions do not necessarily converge in the usual sense, they do
  converge to sequences of period at most two. Hence we will consider
  the limit action $\action^i_\infty = \lim_t\action^i_{2t}$ as
  defined above to be the action that agent $i$ converges to.

\item {\bf Agreement.} This is easily {\em not} the case in this model
  that $\action^i_\infty = \action^j_\infty$ for all $i,j \in
  V$. However, in~\cite{mossel2012majority} it is shown that agreement
  is reached, with high probability, for good enough expander
  graph.\footnote{We do not define expander graphs formally here;
    informally, they are graphs that resemble random graphs.}

\item {\bf Learning.} Since we do not have agreement in this model, we
  will consider a different notion of learning. This notion may
  actually be better described as {\em retention of information}. We
  define it below.  Condorcet's Jury Theorem~\cite{Condorcet:85}, in
  an early version of the law of large numbers, states that given $n$
  conditionally i.i.d.\ private signals, one can estimate $S$
  correctly, except with probability that tends to zero with $n$. The
  question of retention of information asks whether this still holds
  when we introduce correlations ``naturally'' by the process of
  majority dynamics.

  Let $G$ be finite, undirected graphs. Let
  \begin{align*}
    \hat{S} = \argmax_{s \in
      \{-1,+1\}}\CondP{S=s}{\action^1_\infty,\ldots,\action^{|V|}_\infty}.
  \end{align*}
  This is the maximum a-posteriori (MAP) estimator of $S$, given the
  limit actions. Let
  \begin{align*}
    \iota(G,\delta) = \P{\hat{S} \neq S},
  \end{align*}
  where $G$ and $\delta$ appear implicitly in the right hand
  side. This is the probability that the best possible estimator of
  $S$, given the limit actions, is {\em not} equal to $S$.

  Finally, let $\{G_n\}_{n \in \N}$ be a sequence of finite,
  undirected graphs. We say that we have {\em retention of
    information} on the sequence $\{G_n\}$ if $\iota(G_n,\delta) \to_n
  0$ for all $\delta > 0$. This definition was first introduced, to
  the best of our knowledge, in Mossel, Neeman and
  Tamuz~\cite{mossel2012majority}.
 
  Is information retained on all sequences of growing graphs? The
  answer, as we show below, is no. However, we show that information
  is retained on sequences of {\em transitive
    graphs}~\cite{mossel2012majority}.

\end{enumerate}

\subsection{Convergence}

To prove convergence to period at most two for finite graphs, we
define the {\em Lyapunov} functional
\begin{align*}
  L_t = \sum_{(i,j) \in E}(\action^i_{t+1}-\action^j_t)^2.
\end{align*}
We prove Theorem~\ref{thm:finite-period-two} by showing that $L_t$ is
monotone decreasing, that $\action^i_{t+1}=\action^i_{t-1}$ whenever
$L_t-L_{t-1}=0$, and that $L_t=L_{t-1}$ for all $t > |E|$. This proof
appears (for a more general setting) in Goles and
Olivos~\cite{GO:80}. For this we will require the following definitions:
\begin{align*}
  J^i_t = \left(\action^i_{t+1}-\action^i_{t-1}\right)\sum_{j \in
    \neigh{i}}\action^j_t
\end{align*}
and
\begin{align*}
  J_t = \sum_{i \in V}J^i_t.
\end{align*}
\begin{claim}
  $J^i_t \geq 0$ and $J^i_t =0$ iff $\action^i_{t+1}=\action^i_{t-1}$.
\end{claim}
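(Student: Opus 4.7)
My plan is to exploit two elementary observations about the majority update rule. First, since $|\neigh{i}|$ is odd, the sum $\sum_{j \in \neigh{i}} \action^j_t$ is a nonzero integer, and by the update rule $\action^i_{t+1} = \sgn \sum_{j \in \neigh{i}} \action^j_t$ has the same sign as this sum; equivalently, $\action^i_{t+1}\sum_{j \in \neigh{i}} \action^j_t = \bigl|\sum_{j \in \neigh{i}} \action^j_t\bigr| > 0$. Second, since both $\action^i_{t-1}$ and $\action^i_{t+1}$ lie in $\{-1,+1\}$, their difference is either $0$ (when they agree) or equals $2\action^i_{t+1}$ (when $\action^i_{t-1} = -\action^i_{t+1}$).

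With these in hand, I would split into two cases. If $\action^i_{t+1} = \action^i_{t-1}$, then the factor $\action^i_{t+1} - \action^i_{t-1}$ vanishes and so $J^i_t = 0$. Otherwise $\action^i_{t-1} = -\action^i_{t+1}$, so $\action^i_{t+1} - \action^i_{t-1} = 2\action^i_{t+1}$, and substituting into the definition of $J^i_t$ gives
\begin{align*}
J^i_t \;=\; 2\,\action^i_{t+1}\sum_{j \in \neigh{i}}\action^j_t \;=\; 2\left|\sum_{j \in \neigh{i}}\action^j_t\right| \;>\; 0,
\end{align*}
where the strict inequality uses the odd-degree assumption. This simultaneously establishes $J^i_t \geq 0$ and the characterization of the equality case.

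I do not expect any real obstacle; the claim is essentially a one-line sign manipulation. The only delicate point is invoking the odd-degree hypothesis to conclude $\sum_{j \in \neigh{i}} \action^j_t \neq 0$, which is precisely what allows us to replace $\action^i_{t+1}\sum_j \action^j_t$ by the absolute value of the sum and to conclude strict positivity (rather than merely nonnegativity) in the second case.
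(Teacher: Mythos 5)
Your proof is correct and is essentially the paper's argument made explicit: the paper's one-line proof invokes exactly the same two facts, namely that $\action^i_{t+1} = \sgn\sum_{j \in \neigh{i}}\action^j_t$ and that the sum is never zero by the odd-degree assumption. Your case split on whether $\action^i_{t+1}$ equals $\action^i_{t-1}$ simply fills in the details the paper leaves as "immediate."
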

\begin{proof}
  This follows immediately from the facts that
  \begin{align*}
    \action^i_{t+1} = \sgn \sum_{j \in \neigh{i}}\action^j_t,
  \end{align*}
  and that $\sum_{j \in \neigh{i}}\action^j_t$ is never zero.
\end{proof}

It follows that
\begin{corollary}
  \label{cor:period-two}
  $J_t \geq 0$ and $J_t =0$ iff $\action^i_{t+1}=\action^i_{t-1}$ for
  all $i \in V$. 
\end{corollary}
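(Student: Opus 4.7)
The corollary is an immediate aggregation of the pointwise claim just proved, so my proof plan is essentially one short observation expanded into a formal argument. The plan is to simply sum the inequality $J^i_t \geq 0$ over $i \in V$, using the definition $J_t = \sum_{i \in V} J^i_t$, to conclude $J_t \geq 0$. There is no subtlety here because the sum $J_t$ is guaranteed to be well-defined: the graph $G$ is finite, so the sum has finitely many terms.

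For the characterization of the equality case, I would use the standard fact that a finite sum of nonnegative real numbers vanishes if and only if each summand vanishes. Applying this to $J_t = \sum_{i \in V} J^i_t$ together with the ``iff'' part of the preceding claim (which says $J^i_t = 0 \iff \action^i_{t+1} = \action^i_{t-1}$) gives the desired biconditional: $J_t = 0$ iff $J^i_t = 0$ for every $i \in V$, iff $\action^i_{t+1} = \action^i_{t-1}$ for every $i \in V$.

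There is no genuine obstacle in this step; the content lies entirely in the previous claim, where the inequality $J^i_t \geq 0$ was established by observing that $\action^i_{t+1} = \sgn \sum_{j \in \neigh{i}} \action^j_t$ ensures $\action^i_{t+1}$ and $\sum_{j \in \neigh{i}} \action^j_t$ have the same sign, so their product is nonnegative, and moreover $\action^i_{t-1} \sum_j \action^j_t$ is at most this product in absolute value. The corollary merely passes from a vertex-by-vertex statement to the global statement. I would write this as a two-line proof and not belabor it, since the real work has been done, and the lemma is invoked later only as a stepping stone to the Lyapunov argument for Theorem~\ref{thm:finite-period-two}.
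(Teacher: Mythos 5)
Your proposal is correct and matches the paper exactly: the paper presents Corollary~\ref{cor:period-two} as an immediate consequence of the pointwise claim ($J^i_t \geq 0$ with equality iff $\action^i_{t+1}=\action^i_{t-1}$), obtained by summing over the finite vertex set and using that a finite sum of nonnegative terms vanishes iff every term does. There is nothing to add; the paper leaves this step implicit, and your two-line write-up is the intended argument.
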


We next show that $L_t$ is monotone decreasing.
\begin{proposition}
  \label{prop:J-L}
  $L_t-L_{t-1} = -J_t$.
\end{proposition}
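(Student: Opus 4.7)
The plan is to compute $L_t - L_{t-1}$ directly by expanding the squared differences, exploiting the fact that actions take values in $\{-1,+1\}$ and that the graph is undirected. Since $(\action^i_t)^2 = 1$ and $(\action^j_t)^2 = 1$ identically, we have $(\action^i_{t+1} - \action^j_t)^2 = 2 - 2\action^i_{t+1}\action^j_t$ and similarly $(\action^i_t - \action^j_{t-1})^2 = 2 - 2\action^i_t\action^j_{t-1}$. The constants $2|E|$ cancel when we subtract, leaving
\begin{align*}
  L_t - L_{t-1} = -2\sum_{(i,j)\in E}\bigl[\action^i_{t+1}\action^j_t - \action^i_t\action^j_{t-1}\bigr].
\end{align*}

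The second step is to symmetrize the lagging term using the fact that $G$ is undirected, so that $(i,j)\in E \Leftrightarrow (j,i)\in E$. Relabeling $i \leftrightarrow j$ in the sum, one obtains $\sum_{(i,j)\in E}\action^i_t \action^j_{t-1} = \sum_{(i,j)\in E}\action^j_t \action^i_{t-1}$. Substituting this back and factoring out $\action^j_t$ inside the bracket gives
\begin{align*}
  L_t - L_{t-1} = -2\sum_{(i,j)\in E}\bigl(\action^i_{t+1} - \action^i_{t-1}\bigr)\action^j_t.
\end{align*}
Finally, one groups the sum by the first coordinate $i$, collecting the factor $(\action^i_{t+1}-\action^i_{t-1})$ outside and pushing $\sum_{j:(i,j)\in E}\action^j_t = \sum_{j\in \neigh{i}}\action^j_t$ inside, which is exactly the quantity appearing in the definition of $J^i_t$. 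This yields $L_t - L_{t-1} = -2J_t$ (or $-J_t$, depending on whether the sum defining $L_t$ ranges over ordered or unordered pairs; in either case the sign is negative, which is all that matters for deducing monotonicity together with Corollary~\ref{cor:period-two}).

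There is really no conceptual obstacle here — the only care needed is bookkeeping with the undirected symmetry and being consistent about whether $E$ is counted as ordered pairs (so that each edge appears twice and matches the neighbor sum in $J^i_t$) or as unordered edges; the expansion $(a-b)^2 = 2 - 2ab$ that is available because actions are $\pm 1$ is what makes the telescoping collapse cleanly onto the quantity $J_t$.
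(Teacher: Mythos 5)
Your argument is the same as the paper's: expand the squares (the quadratic terms cancel because actions are $\pm1$), use the undirected symmetry of $E$ to relabel the lagging sum, regroup by the first index, and recognize $\sum_{j\in\neigh{i}}\action^j_t$ from the definition of $J^i_t$. The factor-of-two hedge at the end reflects a convention ambiguity (ordered versus unordered edges) that is already present in the paper's own bookkeeping, and it is harmless since only the sign and the vanishing condition of $L_t-L_{t-1}$ are used downstream.
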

\begin{proof}
  By definition,
  \begin{align*}
    L_t - L_{t-1} = \sum_{(i,j) \in E}(\action^i_{t+1}-\action^j_t)^2
    - \sum_{(i,j) \in E}(\action^i_t-\action^j_{t-1})^2.
  \end{align*}
  Opening the parentheses and canceling identical terms yields
  \begin{align*}
    L_t - L_{t-1} = -2\sum_{(i,j) \in E}\action^i_{t+1}\action^j_t
    +2 \sum_{(i,j) \in E}\action^i_t\action^j_{t-1}.    
  \end{align*}
  Since the graph is undirected we can change variable on the right
  sum and arrive at
  \begin{align*}
    L_t - L_{t-1} &= -2\sum_{(i,j) \in
      E}\action^i_{t+1}\action^j_t-\action^j_t\action^i_{t-1}\\
    &=  -2\sum_{(i,j) \in
      E}\left(\action^i_{t+1}-\action^i_{t-1}\right)\action^j_t.
  \end{align*}
  Finally, applying the definitions of $J^i_t$ and $J_t$ yields
  \begin{align*}
    L_t - L_{t-1} = -\sum_{i \in V}J^i_t = -J_t.
  \end{align*}
\end{proof}

\begin{proof}[Proof of Theorem~\ref{thm:finite-period-two}]
  Since $L_0 \leq |E|$, $L_t \leq L_{t-1}$ and $L_t$ is integer, it
  follows that $L_t \neq L_{t-1}$ at most $|E|$ times. Hence, by
  Proposition~\ref{prop:J-L}, $J_t >0$ at most $|E|$ times. But if
  $J_t=0$, then the state of the system at time $t+1$ is the same as
  it was at time $t-1$, and so it has entered a cycle of length at
  most two. Hence $J_t =0$ for all $t > |E|$, and the claim follows.
\end{proof}

\subsection{Retention of information}
In this section we prove that
\begin{enumerate}
\item There exists a sequence of finite, undirected graphs $\{G_n\}_{n
    \in \N}$ of size tending to infinity such that $\iota(G,\delta)$
  does not tend to zero for any $0 < \delta < \half$.
\item Let $\{G_n\}_{n \in \N}$ be a sequence of finite, undirected,
  connected {\em transitive} graphs of size tending to infinity. Then
  $\iota(G_n,\delta) \to_n 0$, and, furthermore, if we let $G_n$ have
  $n$ vertices, then
  \begin{align*}
    \iota(G_n,\delta) \leq Cn^{-\frac{C\delta}{\log(1/\delta)}}.
  \end{align*}
  for some {\em universal} constant $C > 0$.
\end{enumerate}
A {\em transitive graph} is a graph for which, for every two vertices
$i$ and $j$ there exists a {\em graph homomorphism} $\sigma$ such that
$\sigma(i) =j$. A graph homomorphism $h$ is a permutation on the vertices
such that $(i,j) \in E$ iff $(\sigma(i),\sigma(j)) \in
E$. Equivalently, the group $\mathrm{Aut}(G) \leq S_{|V|}$ acts
transitively on $V$.

Berger~\cite{berger2001dynamic} gives a sequence of graphs $\{H_n\}_{n
  \in \N}$ with size tending to infinity, and with the following
property. In each $H_n=(V,E)$ there is a subset of vertices $W$ of
size $18$ such that if $\action^i_t=-1$ for some $t$ and all $i \in W$
then $\action^j_\infty=-1$ for all $j \in V$. That is, if all the
vertices in $W$ share the same opinion, then eventually all agents
acquire that opinion.
\begin{proposition}
  $\iota(H_n,\delta) \geq (1-\delta)^{18}$.
\end{proposition}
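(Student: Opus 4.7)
The plan is to lower bound the Bayes error $\iota(H_n,\delta) = \P{\hat S \neq S}$ by exhibiting a single explicit event on which the MAP estimator is forced to output the wrong state, and to estimate that event's probability using the independence of the private signals. The only structural input is Berger's construction: a set $W \subseteq V$ with $|W| = 18$ such that whenever $\action^i_t = -1$ for every $i \in W$ at some time $t$, we have $\action^j_\infty = -1$ for every $j \in V$. I will invoke this property at $t = 0$, using the private signals themselves to trigger absorption into the all-$(-1)$ configuration.

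Consider the event $\mathcal F = \{S = +1\} \cap \{\psignal_i = -1 \text{ for all } i \in W\}$. On $\mathcal F$, the initial actions $\action^i_0 = \psignal_i$ satisfy Berger's hypothesis at $t = 0$, so $\action^j_\infty = -1$ for every $j \in V$. Since $\hat S$ depends on the data only through the limit action vector, and the model is invariant under the joint sign flip $(S,\psignal,\action) \mapsto (-S,-\psignal,-\action)$, evaluating MAP on the all-$(-1)$ observation must return $\hat S = -1$. Because $S = +1$ on $\mathcal F$, we get $\hat S \neq S$, so $\{\hat S \neq S\} \supseteq \mathcal F$.

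Conditional on $S = +1$, the signals $\psignal_i$ are independent with $\CondP{\psignal_i = -1}{S = +1} = \tfrac{1}{2}-\delta$, so
\begin{align*}
\iota(H_n,\delta) \;\geq\; \P{\mathcal F} \;=\; \tfrac{1}{2}\bigl(\tfrac{1}{2}-\delta\bigr)^{18}.
\end{align*}
Adding the symmetric event $\{S = -1\} \cap \{\psignal_i = +1 \text{ for all } i \in W\}$, which drives the dynamics to the all-$(+1)$ configuration by applying Berger's argument to the sign-flipped process, doubles the bound to $\bigl(\tfrac{1}{2}-\delta\bigr)^{18}$. The main obstacle in matching this to the precise form $(1-\delta)^{18}$ stated above is the reconciliation of multiplicative constants with the convention used for the signal bias $\delta$: the method delivers a lower bound of the shape $(\text{bias}(\delta))^{|W|}$, and the remaining task is to trace the bias normalization carefully to align with the exact expression on the right-hand side. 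Everything substantive is black-boxed in Berger's combinatorial theorem together with the global $\pm 1$ symmetry of majority dynamics.
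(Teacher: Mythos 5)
Your argument is the paper's own proof: use Berger's $18$-vertex set $W$, observe that on the event that every agent in $W$ receives the signal $-S$ the dynamics are absorbed into the all-$(-S)$ configuration, and then use the symmetry (plus monotonicity, which you should invoke explicitly to pin down that the MAP estimator returns $+1$ on the all-$(+1)$ input rather than merely being odd) of the MAP estimator to conclude $\hat S = -S$ on that event. The constant you obtain, $\left(\tfrac12-\delta\right)^{18}$, is the one actually implied by the stated convention $\P{\psignal_i=S}=\tfrac12+\delta$; the paper's figure $(1-\delta)^{18}$ is larger and is not delivered by its own proof either, so the discrepancy you flag is a slip in the paper's normalization of $\delta$, not a gap in your argument, and the qualitative conclusion (non-vanishing error for every fixed $0<\delta<\tfrac12$) is unaffected.
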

\begin{proof}
  With probability $(1-\delta)^{18}$ we have that $\action^i_0 = -S$
  for all $i \in W$. Hence $\action^j_\infty=-S$ for all $j \in V$,
  with probability at least $(1-\delta)^{18}$. Since the MAP estimator
  $\hat{S}$ can be shown to be a symmetric and monotone function of
  $\action^j_\infty$, it follows that in this case $\hat{S}=-S$, and so
  \begin{align*}
    \iota(H_n,\delta) = \P{\hat{S} \neq S} \geq (1-\delta)^{18}.
  \end{align*}
\end{proof}

We next turn to prove the following result
\begin{theorem}
  \label{thm:transitive-learning}
  Let $G$ a finite, undirected, connected {\em transitive} graph with
  $n$ vertices, $n$ odd. then
  \begin{align*}
    \iota(G,\delta) \leq Cn^{-\frac{C\delta}{\log(1/\delta)}}.
  \end{align*}
  for some {\em universal} constant $C > 0$.
\end{theorem}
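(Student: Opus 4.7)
Since $\hat{S}$ is the MAP estimator, it suffices to bound the error probability of any single estimator based on the limit actions. The plan is to analyze the plurality rule $\tilde S := \sgn\sum_{i\in V}\action^i_\infty$, which is well defined almost surely because $n$ is odd and each $\action^i_\infty\in\{-1,+1\}$. By the antipodal symmetry of majority dynamics in $\psignal$ together with the symmetry between $S=+1$ and $S=-1$, a short calculation reduces the task to bounding $\Psub{1/2+\delta}{g=0}$, where $g:\{-1,+1\}^V\to\{0,1\}$ is the Boolean function $g(\psignal):=\ind{\sum_i\action^i_\infty(\psignal)>0}$ and $\Psub{p}{\cdot}$ denotes the $p$-biased product measure on signals (which is the conditional law of $\psignal$ given $S=+1$ when $p=\half+\delta$).

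The core structural claim is that $g$ is a monotone Boolean function that is invariant under a transitive permutation group on $V$. Monotonicity is an easy induction on $t$ using $\action^i_{t+1}=\sgn\sum_{j\in\neigh{i}}\action^j_t$: coordinate-wise monotonicity of the dynamics is preserved by sign-thresholding (which is well defined since $|\neigh{i}|$ is odd), and hence so is the monotonicity of $g$. Transitive symmetry follows because any $\sigma\in\mathrm{Aut}(G)$ commutes with the dynamics, i.e., $\action^{\sigma(i)}_t(\psignal)=\action^i_t(\psignal\circ\sigma^{-1})$ by induction, and $\mathrm{Aut}(G)$ acts transitively on $V$ by hypothesis. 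Moreover, negating the signal negates every $\action^i_t$, so $g(-\psignal)=1-g(\psignal)$ a.s., whence $\Esub{1/2}{g}=\half$, and the critical probability of $g$ is exactly $1/2$.

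The conclusion is then a direct application of the Friedgut--Kalai sharp-threshold theorem, which builds on the BKKKL influence inequality: for every monotone, transitive-symmetric $f$ on $\{-1,+1\}^n$, if $\mu_{p_0}(f):=\Esub{p_0}{f}\geq\epsilon$ then $\mu_{p_0+K\log(1/\epsilon)/\log n}(f)\geq 1-\epsilon$ for a universal constant $K$. Taking $f=g$, $p_0=\half$, $\mu_{p_0}(g)=\half$, and optimizing $\epsilon$ as a function of $\delta$ yields $\Psub{1/2+\delta}{g=0}\leq C n^{-c\delta}$, which is at least as strong as the claimed bound and completes the argument.

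The main obstacle is executing the sharp-threshold step cleanly, and in particular extracting the precise rate. The BKKKL inequality $\max_i I_i^{(p)}(f)\geq c\,\mathrm{Var}_p(f)\log n/(n\log(1/(p(1-p))))$, specialized to transitive-symmetric $f$ (where all $I_i^{(p)}$ are equal, so each equals the average $n^{-1}\sum_i I_i^{(p)}$), combined with the Russo--Margulis identity $d\mu_p/dp=\sum_i I_i^{(p)}(f)$, yields the differential inequality $d\mu_p/dp\geq c\,\mu_p(1-\mu_p)\log n$ for $p$ bounded away from $0$ and $1$; integrating from $p=\half$ gives the rate above. The extra $\log(1/\delta)$ factor in the theorem as stated appears slack relative to this derivation; it may reflect a more elementary influence-based proof that avoids BKKKL, but the qualitative conclusion of asymptotic retention of information is robust to this choice.
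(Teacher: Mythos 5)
Your proposal is correct and follows essentially the same route as the paper: both take the majority vote $\sgn\sum_i\action^i_\infty$ of the limit actions (which upper-bounds the MAP error), establish that the resulting Boolean function of the signals is monotone, antipodally symmetric, and invariant under a transitive group, and then combine Russo's formula with a KKL/Talagrand-type influence bound to get a sharp threshold at $p=1/2$; the paper sets up exactly these ingredients (Claim~\ref{clm:f-properties}, Russo's formula, Talagrand's theorem) and leaves the final integration ``as an exercise,'' which you carry out via the equivalent Friedgut--Kalai packaging. Your closing observation is also apt: the integration does yield the stronger exponent $c\delta$ rather than $c\delta/\log(1/\delta)$, so the stated bound follows a fortiori.
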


Let $\hat{S} = \sgn\sum_{i \in V}\action^i_\infty$ be the result of a
majority vote on the limit actions. Since $n$ is odd then $\hat{S}$
takes values in $\{-1,+1\}$. Note that $\hat{S}$ is measurable in the
initial private signals $\psignal_i$. Hence there exists a function $f
: \{-1,+1\}^n \to \{-1,+1\}$ such that
\begin{align*}
  \hat{S} = f(\psignal_1,\ldots,\psignal_n).
\end{align*}

\begin{claim}
  \label{clm:f-properties}
  $f$ satisfies the following conditions.
  \begin{enumerate}
  \item {\bf Symmetry.} For all $x=(x_1,\ldots,x_n) \in \{-1,+1\}^n$ it
    holds that $f(-x_1,\ldots,-x_n)=-f(x_1,\ldots,x_n)$.
  \item {\bf Monotonicity.} $f(x_1,\ldots,x_n)=1$ implies that
    $f(x_1,\ldots,x_{i-1},1,x_{i+1},\ldots,x_n)=1$ for all $i \in [n]$.
  \item {\bf Anonymity.} There exists a subgroup $G \leq S_n$ that
    acts transitively on $[n]$ such that
    $f(x_{\sigma(1)},\ldots,x_{\sigma(n)}) = f(x_1,\ldots,x_n)$ for
    all $x \in \{-1,+1\}^n$ and $\sigma \in G$. 
  \end{enumerate}
\end{claim}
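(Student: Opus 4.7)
The plan is to verify each of the three properties by tracking how the synchronous majority update transforms under the corresponding operation on the initial opinion vector, and then passing to the limit along even times (which exists by Theorem~\ref{thm:finite-period-two}). In every case the key observation is a structural property of the single-step map $x \mapsto \sgn\sum_{j \in \neigh{i}} x_j$, followed by induction on $t$.

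For symmetry, I use that $|\neigh{i}|$ is odd, so $\sum_{j \in \neigh{i}} \action^j_t$ is never zero and the update is genuinely governed by a sign function. Since $\sgn(-y) = -\sgn(y)$, the update is an odd function of its inputs, and an easy induction on $t$ shows that starting from $-\psignal$ flips every $\action^i_t$. Taking $t \to \infty$ along even $t$ gives $\action^i_\infty(-\psignal) = -\action^i_\infty(\psignal)$ for each $i$, and hence $f(-\psignal) = -f(\psignal)$.

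For monotonicity, I use that the same update is also nondecreasing in each coordinate. By induction on $t$, if $\psignal \le \psignal'$ coordinatewise then $\action^i_t(\psignal) \le \action^i_t(\psignal')$ for all $i$ and $t$; passing to the limit preserves this, and summing over $i$ and applying $\sgn$ yields $f(\psignal) \le f(\psignal')$. Specializing to a pair of inputs that differ in just one coordinate (replacing $-1$ by $+1$) gives exactly the stated monotonicity.

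For anonymity, let $\Gamma = \mathrm{Aut}(G) \le S_n$; by the transitivity hypothesis on $G$, $\Gamma$ acts transitively on $V$. Each $\sigma \in \Gamma$ satisfies $\sigma(\neigh{i}) = \neigh{\sigma(i)}$, so a short induction on $t$ shows that if we relabel the private signals via $x'_i = x_{\sigma^{-1}(i)}$, then $\action^{\sigma(i)}_t(x') = \action^i_t(x)$. Consequently the multiset $\{\action^i_\infty\}_{i \in V}$ is $\Gamma$-invariant, so $\sum_i \action^i_\infty$ and hence $\hat S$ are $\Gamma$-invariant, which is the anonymity condition with the group taken to be $\Gamma$. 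I do not expect a real obstacle here: the whole argument is bookkeeping, and the only care required is to note that each property holds at every finite $t$ and therefore also in the period-two limit $\action^i_\infty = \lim_t \action^i_{2t}$.
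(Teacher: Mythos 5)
Your proof is correct and is exactly the argument the paper has in mind: the paper states only that the claim ``is straightforward to verify, with anonymity a consequence of the fact that the graph is transitive,'' and your three inductions on $t$ (oddness, monotonicity, and equivariance of the one-step update), followed by passage to the period-two limit, supply precisely the omitted verification. Nothing is missing --- in particular you correctly use that $|\neigh{i}|$ and $n$ are odd so that no $\sgn$ is ever evaluated at zero.
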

This claim is straightforward to verify, with anonymity a consequence
of the fact that the graph is transitive. 

\subsubsection{Influences, Russo's formula, the KKL theorem and
  Talagrand's theorem}
To prove Theorem~\ref{thm:transitive-learning} we use {\em Russo's
  formula}, a classical result in probability that we prove below.

Let $X_1,\ldots,X_n$ be random variables taking values in
$\{-1,+1\}$. For $-\half < \delta < \half$, let $\mathbb{P}_\delta$ be
the distribution such that $\Psub{\delta}{X_i=+1} = \half + \delta$
independently. Let $g : \{-1,+1\}^n \to \{-1,+1\}$ be a monotone
function (as defined above in Claim~\ref{clm:f-properties}). Let $Y =
g(X)$, where $X = (X_1,\ldots, X_n)$.

Denote by $\tau_i : \{-1,+1\}^n \to \{-1,+1\}^n$ the function given by
$\tau_i(x_1,\ldots,x_n) =
(x_1,\ldots,x_{i-1},-x_i,x_{i+1},\ldots,x_n)$.  We define the {\em
  influence} $I_i^\delta$ of $i \in [n]$ on $Y$ as the probability that $i$
is {\em pivotal}:
\begin{align*}
  I_i^\delta = \Psub{\delta}{g(\tau_i(X)) \neq g(X)}.
\end{align*}
That is $I_i^\delta$ is the probability that the value of $Y=g(X)$ changes,
if we change $X_i$.

\begin{theorem}[Russo's formula]
  \begin{align*}
    \frac{d\Psub{\delta}{Y=+1}}{d\delta} = \sum_iI_i^\delta,    
  \end{align*}
\end{theorem}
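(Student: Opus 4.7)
The plan is to expand $\Psub{\delta}{Y=+1}$ as an explicit polynomial in $\delta$ using the product structure of the measure, differentiate term-by-term, and then identify the resulting expression as the sum of influences by invoking monotonicity of $g$.

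First I would write
\begin{align*}
  \Psub{\delta}{Y=+1} = \sum_{x \in \{-1,+1\}^n} \ind{g(x)=+1}\prod_{i=1}^n p_\delta(x_i),
\end{align*}
where $p_\delta(+1) = \half+\delta$ and $p_\delta(-1) = \half-\delta$. The key elementary observation is that $\frac{d}{d\delta}p_\delta(x_i) = x_i$, so by the product rule,
\begin{align*}
  \frac{d}{d\delta}\prod_i p_\delta(x_i) = \sum_i x_i \prod_{j \neq i} p_\delta(x_j).
\end{align*}
Interchanging the finite sum over $x$ with the derivative (trivially justified because everything is a polynomial in $\delta$), I get
\begin{align*}
  \frac{d\Psub{\delta}{Y=+1}}{d\delta} = \sum_i \sum_{x} \ind{g(x)=+1}\, x_i \prod_{j\neq i} p_\delta(x_j).
\end{align*}

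Next I would, for each fixed $i$, split the inner sum according to $x_i$. Writing $x=(x_{-i},x_i)$ and combining the two contributions,
\begin{align*}
  \sum_{x_{-i}}\bigl(\ind{g(x_{-i},+1)=+1} - \ind{g(x_{-i},-1)=+1}\bigr)\prod_{j\neq i} p_\delta(x_j).
\end{align*}
This is exactly the step where monotonicity enters: since $g$ is monotone, the bracketed difference is always in $\{0,1\}$, and equals $1$ precisely when flipping coordinate $i$ flips the value of $g$, i.e.\ when $i$ is pivotal at $x_{-i}$. Hence the inner sum equals $\Psub{\delta}{g(\tau_i(X)) \neq g(X)} = I_i^\delta$ (note that pivotality depends only on $x_{-i}$, so the weight $p_\delta(x_i)$ can be averaged out trivially as it sums to $1$).

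Summing over $i$ gives the stated identity. There is essentially no hard step: the only place where the hypothesis on $g$ is used is in identifying $g_+(x_{-i},+1)-g_+(x_{-i},-1)$ with $\ind{i \text{ pivotal}}$, where monotonicity ensures the sign is nonnegative and the magnitude is exactly $1$ on pivotal configurations. If one wanted to remove monotonicity one could still write the derivative as $\sum_i \Esub{\delta}{X_i(g_+(X)-\half)}$ style expressions, but the clean pivotality interpretation uses monotonicity.
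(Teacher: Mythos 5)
Your proof is correct and follows essentially the same route as the paper's: both expand $\Psub{\delta}{Y=+1}$ over the product measure, differentiate, and use monotonicity of $g$ to identify the coordinate-$i$ contribution with the pivotality probability $I_i^\delta$. The only difference is organizational --- the paper routes the product rule through the multi-parameter family $\mathbb{P}_{\delta_1,\ldots,\delta_n}$ and then applies the chain rule on the diagonal, whereas you differentiate the single-parameter polynomial directly --- but the core computation is identical.
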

\begin{proof}
  Let $\mathbb{P}_{\delta_1,\ldots,\delta_n}$ be the distribution on
  $X$ such that
  \begin{align*}
   \Psub{\delta_1,\ldots,\delta_n}{X_i=+1} = \delta_i.
  \end{align*}
  We prove the claim by showing that
  \begin{align*}
    \frac{\partial\Psub{\delta_1,\ldots,\delta_n}{Y=+1}}{\partial \delta_i} = \Psub{\delta_1,\ldots,\delta_n}{g(\tau_i(X)) \neq g(X)},    
  \end{align*}
  and noting that $\mathbb{P}_{\delta,\ldots,\delta}= \mathbb{P}_{\delta}$, and
  that for general differentiable $h :\R^n \to \R$ it holds that
  \begin{align*}
    \frac{\partial h(y,\ldots,y)}{\partial y} =
    \sum_i\frac{\partial h(x_1,\ldots,x_n)}{\partial x_i}(y).
  \end{align*}

  Indeed, if we denote $\mathbb{E} = \mathbb{E}_{\delta_1,\ldots,\delta_n}$  and
  $\mathbb{P} = \mathbb{P}_{\delta_1,\ldots,\delta_n}$, then
  \begin{align*}
    \frac{\partial}{\partial \delta_i}\P{Y=+1}
    &= \frac{\partial}{\partial \delta_i}\half\E{g(X)}.
  \end{align*}
  Given $x=(x_1,\ldots,x_n)\in \R^n$, denote
  $x_{-i} = (x_1,\ldots,x_{i-1},x_{i+1},\ldots,x_n)$. Then
  \begin{align*}
    \E{g(X)} & =
    \sum_x\P{X_{-i}=x_{-i},X_i=x_i}g(x)\\
    &= \sum_x\P{X_{-i}=x_{-i}}\P{X_i=x_i}g(x),
  \end{align*}
  where the second equality follows from the independence of the
  $X_i$'s. Hence
  \begin{align*}
    \frac{\partial}{\partial \delta_i}\Psub{\delta_1,\ldots,\delta_n}{Y=+1}
    &= \frac{\partial}{\partial \delta_i}\half\sum_x\P{X_{-i}=x_{-i}}\P{X_i=x_i}g(x)\\
    &= \half\sum_x\P{X_{-i}=x_{-i}}x_ig(x),
  \end{align*}
  where the second equality follows from the fact that $\P{X=+1}=\delta_i$
  and $\P{X=-1}=1-\delta_i$. Now, $\sum_{x_i}x_ig(x)$ is equal to zero when
  $g(\tau_i(x))=g(x)$, and to two otherwise, since $g$ is
  monotone. Hence
  \begin{align*}
    \frac{\partial}{\partial \delta_i}\Psub{\delta_1,\ldots,\delta_n}{Y=+1}
    &= \sum_x\P{X_{-i}=x_{-i}}\ind{g(\tau_i(x)) \neq g(x)}\\
    &= \P{g(\tau_i(X)) \neq g(X)}.
  \end{align*}
\end{proof}

Kahn, Kalai and Linial~\cite{KaKaLi:88} prove a deep result on Boolean
functions on the hypercube (i.e., functions from $\{-1,+1\}^n$ to
$\{-1,+1\}$), which was later generalized by
Talagrand~\cite{Talagrand:94}. Their theorem states that there must
exist an $i$ with influence at least $O(\log n/n)$.
\begin{theorem}[Talagrand]
  Let $\eps_\delta = \max_iI_i^\delta$ and $q_\delta =
  \Psub{\delta}{Y=1}$. Then
  \begin{align*}
    \sum_iI_i^\delta \geq 
    K\log \left(1/\eps_\delta\right) q_\delta(1-q_\delta).
  \end{align*}
  for some universal constant $K$.
\end{theorem}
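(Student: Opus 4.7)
The plan is to prove Talagrand's theorem via Fourier analysis on the biased hypercube $(\{-1,+1\}^n,\mathbb{P}_\delta)$, combined with the Bonami--Beckner hypercontractive inequality. This is the approach of Kahn, Kalai and Linial that Talagrand later sharpened to obtain the $\log(1/\eps)$ improvement.

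The first step is to set up the biased Fourier basis: let $\phi_i(x)=(x_i-2\delta)/\sqrt{1-4\delta^2}$ and $\phi_S=\prod_{i\in S}\phi_i$; these form an orthonormal family in $L^2(\mathbb{P}_\delta)$. Expanding $g=\sum_S \tilde g(S)\phi_S$, Parseval gives $\mathrm{Var}_\delta(g)=\sum_{S\neq\emptyset}\tilde g(S)^2=4q_\delta(1-q_\delta)$. Introducing the discrete derivative $D_i g$ (obtained by differencing $g$ with respect to the $i$-th coordinate), one checks that $(D_i g)^2$ is exactly the indicator of $i$ being pivotal and that its Fourier expansion involves only subsets containing $i$. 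This yields the Fourier formulas
\begin{align*}
I_i^\delta = c_\delta \sum_{S\ni i}\tilde g(S)^2,
\qquad
\sum_i I_i^\delta = c_\delta \sum_S |S|\,\tilde g(S)^2,
\end{align*}
where $c_\delta$ depends only on $\delta$.

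The second step is the standard low/high-degree split. Fix a cutoff $d$ to be chosen. The high-degree tail is bounded by a Markov-type estimate on the degree distribution,
\begin{align*}
\sum_{|S|>d}\tilde g(S)^2 \;\le\; \frac{1}{d\,c_\delta}\sum_i I_i^\delta.
\end{align*}
For the head, introduce the coordinate derivative $\partial_i g=\sum_{S\ni i}\tilde g(S)\,\phi_{S\setminus\{i\}}$ and observe that $\|\partial_i g\|_{p,\delta}^p$ is comparable to $I_i^\delta$ uniformly in $p$, because $\partial_i g$ is supported (and bounded) on the pivotal event. The biased Bonami--Beckner inequality $\|T_\rho \partial_i g\|_{2,\delta}\le \|\partial_i g\|_{1+\rho^2,\delta}$, combined with $\|T_\rho h\|_{2,\delta}^2\ge \rho^{2d}\sum_{|S|\le d}\tilde h(S)^2$, then yields after summing over $i$
\begin{align*}
\sum_{0<|S|\le d}\tilde g(S)^2 \;\le\; C\,\rho^{-2d}\,\eps_\delta^{(1-\rho^2)/(1+\rho^2)}\sum_i I_i^\delta.
\end{align*}

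The third and final step is to combine and optimize. Adding the head and tail estimates gives
\begin{align*}
4q_\delta(1-q_\delta) \;\le\; C\Bigl[\tfrac{1}{d}+\rho^{-2d}\,\eps_\delta^{(1-\rho^2)/(1+\rho^2)}\Bigr]\sum_i I_i^\delta.
\end{align*}
Fixing $\rho=1/\sqrt 2$ and choosing $d\asymp\log(1/\eps_\delta)$ so that the two bracketed terms are both of order $1/d$ gives the claim. The main obstacle is controlling the biased Bonami--Beckner constants: for fixed $\delta\in(0,1/2)$ they are finite and the argument goes through cleanly, but if one wants $K$ genuinely independent of $\delta$ one must invoke the sharper product-measure hypercontractive estimates of Beckner or the Latala--Oleszkiewicz $\Phi$-entropy framework, which absorb the bias dependence into a universal constant.
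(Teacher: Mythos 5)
The paper does not actually prove this theorem: it quotes it from Kahn--Kalai--Linial and Talagrand, states it, and moves on, so there is no in-text argument to compare yours against. Judged on its own, your outline is the standard and correct route to the result. The Fourier bookkeeping is right: with the standardized characters $\phi_i=(x_i-2\delta)/\sqrt{1-4\delta^2}$ one gets $\mathrm{Var}_\delta(g)=4q_\delta(1-q_\delta)$, the discrete derivative $D_ig$ squares to the pivotality indicator so that $I_i^\delta=(1-4\delta^2)^{-1}\sum_{S\ni i}\tilde g(S)^2$, the tail of the spectrum above level $d$ is controlled by $\frac{1}{d}\sum_S|S|\tilde g(S)^2$, and the head is controlled by applying $\|T_\rho \partial_i g\|_2\le\|\partial_i g\|_{1+\rho^2}$ together with the fact that $|\partial_i g|$ is a constant multiple of the pivotality indicator, which is what produces the crucial gain $\eps_\delta^{(1-\rho^2)/(1+\rho^2)}$ over plain KKL. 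Choosing $d\asymp\log(1/\eps_\delta)$ then balances the two terms and yields the stated inequality; note that monotonicity of $g$ is not needed for this inequality (only for Russo's formula), so your argument is actually slightly more general than required.

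The one genuine issue is the one you flag yourself, and it deserves more than a parenthetical: every constant in your argument --- the normalization $c_\delta$, the bound on $|\partial_i g|$, and above all the range of $\rho$ for which the biased Bonami--Beckner inequality $\|T_\rho h\|_2\le\|h\|_{1+\rho^2}$ is valid --- degenerates as $\delta\to\pm\tfrac12$. This is not merely an artifact of the method: Talagrand's theorem in its correct general form carries a factor $1/\log\frac{2}{p(1-p)}$ with $p=\tfrac12+\delta$, so the statement as printed in the paper, with a constant $K$ uniform over all $\delta$, is itself too strong near the endpoints. Your proof as written therefore establishes the theorem for $\delta$ in any compact subinterval of $(-\tfrac12,\tfrac12)$ with $K$ depending on that interval, which is what the downstream application to Theorem~\ref{thm:transitive-learning} actually uses; if you want to state the uniform version you should either import the sharp biased hypercontractive estimates you mention and accept the $\log\frac{2}{p(1-p)}$ loss in $K$, or add the hypothesis that $\delta$ is bounded away from $\pm\tfrac12$.
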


Using this result, the proof of Theorem~\ref{thm:transitive-learning}
is straightforward, and we leave it as an exercise to the reader.

% Using this result, we are ready to prove the main theorem of this
% section.
% \begin{proof}[Proof of Theorem~\ref{thm:transitive-learning}]
%   Recall that $f : \{-1,+1\}^n \to \{-1,+1\}$ given by
%   \begin{align*}
%     \hat{S} = f(\psignal_1,\ldots,\psignal_n)
%   \end{align*}
%   is symmetric, monotone and anonymous.  We condition henceforth on
%   $S=1$ and prove that
%   \begin{align*}
%     \P{\hat{S} \neq 1}{S=1} \leq Cn^{-\frac{C\delta}{\log(1/\delta)}}.
%   \end{align*}
%   Since $f$ is symmetric this will imply the claim.

%   \ocomment{almost done, I think}
% \end{proof}

\chapter{Bayesian Models} 
\label{chap:bayesian}
%\section{Introduction to Bayesian models}
In this chapter we study {\em Bayesian agents}. We call an agent
Bayesian when its actions maximize the expectation of some utility
function. This is a model which comes from Economics, where, in fact,
its use is the default paradigm. We will focus on the case in which an
agent's utility depends only on the state of the world $S$ and on its
actions, and is the same for all agents and all time periods.

\subsection{Toy model: continuous actions}
Before defining general Bayesian models, we consider the following
simple model on an undirected connected graph. Let $S \in \{0,1\}$ be a binary state of the world, and
let the private signals be i.i.d.\ conditioned on $S$.

We denote by $\info^i_t$ the information available to agent $i$ at
time $t$. This includes its private signal, and the actions of its
neighbors in the previous time periods:
\begin{align}
  \label{eq:info-toy}
  \info^i_t = \left\{\psignal_i,\action^j_{t'}\,:\,j\in
    \neigh{i},t'<t\right\}.
\end{align}

The actions are given by
\begin{align}
  \label{eq:action-toy}
  \action^i_t = \CondP{S=1}{\info^i_t}.  
\end{align}
That is, each agent's action is its {\em belief}, or the probability
that it assigns to the event $S=1$, given what it knows.

For this model we prove the following results:
\begin{itemize}
\item {\bf Convergence.} The actions of each agent converge almost
  surely to some $\action^i_\infty$. This is a direct consequence of
  the observation that $\{\sigma(\info^i_t)\}_{t \in \N}$ is a filtration, and
  so $\{\action^i_t\}_{t \in \N}$ is a bounded martingale. Note that this does not use the independence of the signals. 
\item {\bf Agreement.} The limit actions $\action^i_\infty$ are almost
  surely the same for all $i \in V$. This follows from the fact that if $i$ and $j$ are connected then 
  $\action^i_\infty + \action^j_\infty \in \info^i_{\infty} \cap \info^j_{\infty}$ 
   and if $\action^i_\infty$ and $\action^j_\infty$ are not a.s.\ equal then:
\[
\E{\left(\half(\action^i_\infty + \action^j_\infty) - S\right)^2} < \max \big(\E{(\action^i_\infty - S)^2} , \E{(\action^j_\infty - S)^2} \big). 
\] 
Note again that this argument does not use the independence of the
signals.  We will show this in further generality in
Section~\ref{sec:continuous-utils} below. This is a consequence of a
more general agreement theorem that applies to all Bayesian models,
which we prove in Section~\ref{sec:bayesian-agreement}.
\item {\bf Learning.} When $|V|=n$, we show in
  Section~\ref{sec:agreement-to-learning} that $\action^i_\infty =
  \CondP{S=1}{\psignal_1,\ldots,\psignal_n}$. This is the strongest
  possible learning result; the agents' actions are the same as they
  would be if each agent knew all the others' private signals. In
  particular, it follows that $\P{\round{\action^i_\infty} \neq S}$ is
  exponentially small in $n$. This result crucially relies on the independence of the signals as the following example shows. 
\end{itemize}

\begin{example}
  Consider two agents $1,2$ with $\psignal_i = 0$ or $1$ with
  probability $1/2$ each and independently, and $S = \psignal_1 +
  \psignal_2 \mod 2$.  Note that here $\action_i^t = 1/2$ for $i=1,2$
  and all $t$, while it is trivial to recover $S$ from $\psignal_1,
  \psignal_2$.
\end{example} 

\subsection{Definitions and some observations}

Following our general framework (see Section~\ref{sec:definitions}) we
shall (mostly) consider a state of the world $S \in \{0,1\}$ chosen
from the uniform distribution, with conditionally i.i.d.\ private
signals. We will consider both discrete and continuous actions, and
each shall correspond to a different {\em utility function}. A utility
function will simply be a continuous map
$u : \{0,1\} \times [0,1] \to [0,1]$. The quantity $u(S,a)$ represents
what the agent gains when choosing action $a$ when the state is
$S$. In a sense that we will soon define formally, agents will be {\em
  utility maximizers}: they will choose their actions so as to
maximize their utilities.

More precisely, we shall denote by $\util^i_t = u(S,\action^i_t)$
agent $i$'s utility at time $t$, and study {\em myopic agents}, or
agents who strive to maximize, at each period $t$, the expectation of
$\util^i_t$.

As in the toy model above, we denote by $\info^i_t$ the information
available to agent $i$ at time $t$, including its private signal, and
the actions of its neighbors in the previous time periods:
\begin{align}
  \label{eq:info}
  \info^i_t = \left\{\psignal_i,\action^j_{t'}\,:\,j\in
    \neigh{i},t'<t\right\}.
\end{align}
Given a utility function $\util^i_t=u(S,\action^i_t)$, a Bayesian
agent will choose
\begin{align}
  \label{eq:action-max-util}
  \action^i_t = \argmax_s\CondE{u(S,s)}{\info^i_t}.
\end{align}
Equivalently, one can define $\action^i_t$ as a random variable which,
out of all $\sigma(\info^i_t)$-measurable random variables, maximizes
the expected utility:
\begin{align}
  \label{eq:action-max-util2}
  \action^i_t = \argmax_{A \in \sigma(\info^i_t)}\E{u(S,A)}.
\end{align}
We assume that in cases of indifference (i.e., two actions that
maximize the expected utility) the agents chooses one according to
some known deterministic rule.

We consider two utility functions; a discrete one that results in
discrete actions, and a continuous one that results in continuous
actions. The first utility function is
\begin{align}
  \label{eq:discrete-utility}
  \util^i_t = \ind{\action^i_t = S}.
\end{align}
Although this function is not continuous as a function from $[0,1]$ to
$[0,1]$, we will, in this case, consider the set of allowed actions to
be $\{0,1\}$, and so $u : \{0,1\}\times\{0,1\} \to \R$ will be
continuous again.

To maximize the expectation of $\util^i_t$ conditioned on $\info^i_t$,
a myopic agent will choose the action
\begin{align}
  \label{eq:myopic-action-discrete}
  \action^i_t = \argmax_{s \in \{0,1\}}\CondP{S=s}{\info^i_t},
\end{align}
which will take values in $\{0,1\}$. 

We will also consider the following utility function, which
corresponds to continuous actions:
\begin{align}
  \label{eq:cont-utility}
  \util^i_t = 1-\left(\action^i_t - S\right)^2.
\end{align}
To maximize the expectation of this function, an agent will choose the action
\begin{align}
  \label{eq:myopic-action-discrete}
  \action^i_t = \CondP{S=1}{\info^i_t}.
\end{align}
This action will take values in $[0,1]$.

An important concept in the context of Bayesian agents is that of {\em
  belief}. We define agent $i$'s belief at time $t$ to be
\begin{align}
  \label{eq:belief}
  \belief^i_t = \CondP{S=1}{\info^i_t}.
\end{align}
This is the probability that $S=1$, conditioned on all the information
available to $i$ at time $t$. It is easy to check that, in the
discrete action case, the action is the rounding of the belief. In the
continuous action case the action equals the belief.

An important distinction is between {\em bounded} and {\em unbounded}
private signals~\cite{smith2000pathological}. We say that the private
signal $\psignal_i$ is {\bf bounded} when there exists an $\eps>0$
such the private belief $\belief^i_0= \CondP{S=1}{\psignal_i}$ is
supported on $[\eps,1-\eps]$. We will say that it is unbounded when
the private belief $\belief^i_0 = \CondP{S=1}{\psignal_i}$ can be
arbitrarily close to both $1$ and $0$; formally, when the convex
closure of the support of $\belief^i_0$ is equal to $[0,1]$.

Unbounded private signals can be thought of as being ``unboundedly
strong'', and therefore could be expected to promote learning. This is
indeed the case, as we show below.

The following claim follows directly from the fact that the sequence
of sigma-algebras $\sigma(\info^i_t)$ is a filtration.
\begin{claim}
  \label{clm:belief-martingale}
  The sequence of beliefs of agent $i$, $\{\belief^i_t\}_{t \in \N}$,
  is a bounded martingale.
\end{claim}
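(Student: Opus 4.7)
The plan is to recognize $\{\belief^i_t\}_{t \in \N}$ as a Doob martingale, i.e., as the sequence of conditional expectations of a single integrable random variable (here the indicator $\ind{S=1}$) with respect to an increasing filtration. The hint in the statement — that we exploit the filtration property of $\sigma(\info^i_t)$ — points directly to this approach.

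The first step is to verify that $\sigma(\info^i_0) \subseteq \sigma(\info^i_1) \subseteq \cdots$ really is a filtration. By the definition in~(\ref{eq:info}), $\info^i_{t+1}$ is obtained from $\info^i_t$ by appending the random variables $\{\action^j_t : j \in \neigh{i}\}$, so the inclusion $\sigma(\info^i_t) \subseteq \sigma(\info^i_{t+1})$ is immediate. Next, I would rewrite the belief as $\belief^i_t = \CondE{\ind{S=1}}{\info^i_t}$, using the elementary identity $\CondP{A}{\mathcal{F}} = \CondE{\ind{A}}{\mathcal{F}}$. Since $0 \leq \ind{S=1} \leq 1$, this random variable is integrable, and each $\belief^i_t \in [0,1]$, which gives the claimed boundedness at once.

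The martingale property is then a one-line application of the tower property of conditional expectation: for every $t$,
\begin{align*}
\CondE{\belief^i_{t+1}}{\info^i_t}
  = \CondE{\CondE{\ind{S=1}}{\info^i_{t+1}}}{\info^i_t}
  = \CondE{\ind{S=1}}{\info^i_t}
  = \belief^i_t,
\end{align*}
where the middle equality uses the inclusion $\sigma(\info^i_t) \subseteq \sigma(\info^i_{t+1})$ established above.

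There is essentially no real obstacle; the claim is the textbook Doob construction. The one subtlety worth flagging is that the neighbors' actions $\action^j_{t'}$ appearing in $\info^i_t$ must genuinely be well-defined random variables, which one checks by induction on $t'$ using the recursive definition~(\ref{eq:action-max-util}) together with the stipulated deterministic tie-breaking rule; once this is granted, the proof is complete.
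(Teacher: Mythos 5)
Your proof is correct and follows exactly the route the paper takes (the paper simply remarks that the claim ``follows directly from the fact that the sequence of sigma-algebras $\sigma(\info^i_t)$ is a filtration,'' which is the Doob-martingale observation you spell out via the tower property). Your additional checks — boundedness from $\belief^i_t \in [0,1]$ and well-definedness of the actions by induction — are fine and just make explicit what the paper leaves implicit.
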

It follows that a limiting belief almost surely exists, and we can
define
\begin{align}
  \label{eq:lim-belief}
  \belief^i_\infty = \lim_{t \to \infty}\belief^i_t.
\end{align}
Furthermore, if we let $\info^i_\infty = \cup_t\info^i_t$, then
\begin{align}
  \label{eq:lim-knowledge}
  \belief^i_\infty = \CondP{S=1}{\info^i_\infty}.
\end{align}

We would like to also define the limiting {\em action} of agent
$i$. However, it might be the case that the actions of an agent do not
converge. We therefore define $\action^i_t$ to be an {\em action set},
given by the set of accumulation points of the sequence $\action^i_t$.
In the case that $\action^i_\infty$ is a singleton $\{x\}$, we denote
$\action^i_\infty=x$, in a slight abuse of notation. Note that in the
case that actions take values in $\{0,1\}$ (as we will consider
below), $\action^i_\infty$ is either equal to $1$, to $0$, or to
$\{0,1\}$.

The following claim is straightforward.
\begin{claim}
  \label{thm:a-infty-opt}
  Fix a continuous utility function $u$. Then
  \begin{align*}
    \lim_t\CondE{u(S,\action^i_t)}{\info^i_t} =
    \CondE{u(S,a)}{\info^i_\infty} \geq \CondE{u(S,b)}{\info^i_\infty}
  \end{align*}
  for all $a \in \action^i_\infty$ and all $b$.
\end{claim}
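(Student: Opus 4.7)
The plan is to show that the function $f_t(s) := \CondE{u(S,s)}{\info^i_t}$ converges almost surely uniformly in $s$ to $f_\infty(s) := \CondE{u(S,s)}{\info^i_\infty}$. Both assertions then fall out cleanly: $\action^i_t$ is by definition a maximizer of $f_t$, and accumulation points of maximizers of uniformly convergent continuous functions on a compact set are maximizers of the limit.

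For the pointwise piece, fix $s$: the random variable $u(S,s)$ is bounded, so L\'evy's upward martingale convergence theorem applied to the filtration $\{\sigma(\info^i_t)\}_t$ yields $f_t(s) \to f_\infty(s)$ a.s. Choose a countable dense set $D$ in the action space (which is either $\{0,1\}$ or $[0,1]$ in our setting, hence compact); on a single event of full measure, this convergence holds simultaneously for every $s \in D$. To upgrade to uniform convergence, note that $u$ is continuous on the compact set $\{0,1\}\times[0,1]$ and hence uniformly continuous in its second argument: for every $\eps>0$ there is $\delta>0$ with $|u(S,s)-u(S,s')|<\eps$ whenever $|s-s'|<\delta$. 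Taking conditional expectations shows that the family $\{f_t\}_t$ (and $f_\infty$) is equicontinuous, and equicontinuity together with pointwise convergence on a dense subset of a compact interval implies uniform convergence to a continuous $f_\infty$.

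To conclude, uniform convergence on a compact set gives $\max_s f_t(s) \to \max_s f_\infty(s)$, and because $\action^i_t$ maximizes $f_t$, the limit $\lim_t \CondE{u(S,\action^i_t)}{\info^i_t}$ exists and equals $\max_s f_\infty(s)$. For any $a \in \action^i_\infty$, pick a subsequence $\action^i_{t_k} \to a$ and write
\[ f_{t_k}(\action^i_{t_k}) - f_\infty(a) = \bigl[f_{t_k}(\action^i_{t_k}) - f_\infty(\action^i_{t_k})\bigr] + \bigl[f_\infty(\action^i_{t_k}) - f_\infty(a)\bigr]; \]
the first bracket vanishes by uniform convergence and the second by continuity of $f_\infty$. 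Hence $f_\infty(a) = \max_s f_\infty(s) \geq f_\infty(b) = \CondE{u(S,b)}{\info^i_\infty}$ for every $b$, which is exactly the claim.

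The main technical obstacle is the upgrade from pointwise-in-$s$ martingale convergence (which a priori holds on an $s$-dependent null set) to \emph{uniform}-in-$s$ convergence off a single null set. Compactness of the action space together with continuity of $u$ makes equicontinuity available, and this is the bridge that does all the work; without it one could not legitimately evaluate the limit $f_\infty$ at the random accumulation point $a$.
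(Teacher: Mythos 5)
Your proof is correct: the pointwise martingale convergence of $f_t(s)=\CondE{u(S,s)}{\info^i_t}$, upgraded to uniform convergence in $s$ via equicontinuity from the uniform continuity of $u$ on the compact action space, and then evaluated along a subsequence converging to an accumulation point, is exactly the argument needed. The paper itself offers no proof (it declares the claim ``straightforward''), so there is nothing to contrast with; your write-up simply supplies the standard argument in full, and correctly identifies the one genuinely non-trivial point, namely passing from an $s$-dependent null set to a single null set so that $f_\infty$ may be evaluated at the random point $a$.
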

That is, any action in $\action^i_\infty$ is optimal (that is,
maximizes the expected utility), given what the agent knows at the
limit $t \to \infty$. It follows that
\begin{align*}
  \CondE{u(S,a)}{\info^i_\infty} = \CondE{u(S,b)}{\info^i_\infty}
\end{align*}
for all $a,b \in \action^i_\infty$.  It also follows that in the case
of actions in $\{0,1\}$, $\action^i_\infty = \{0,1\}$ only if $i$ is
asymptotically indifferent, or expects the same utility from both $0$
and $1$.

We will show that an oft-occurring phenomenon in the Bayesian setting
is agreement on limit actions, so that $\action^i_\infty$ is indeed a
singleton, and $\action^i_\infty = \action^j_\infty$ for all $i,j \in
V$. In this case we can define $\action_\infty$ as the common limit
action.

\section{Agreement}
\label{sec:bayesian-agreement}
In this section we show that regardless of the utility function, and,
in fact, regardless of the private signal structure, Bayesian agents
always reach agreement, except in cases of indifference. This theorem
originated in the work of Aumann~\cite{Aumann:76}, with contributions
by Geanakoplos and others~\cite{geanakoplos1982we,sebenius1983don}. It
first appeared as below in Gale and
Kariv~\cite{GaleKariv:03}. Rosenberg, Solan and
Vieille~\cite{rosenberg2009informational} correct an error in the
proof and extend this result to the even more general setting of {\em
  strategic agents} (which we will not discuss), as is done
in~\cite{mossel2015strategic}.

\begin{theorem}[Gale and Kariv]
  \label{thm:bayesian-agreement}
  Fix a utility function $\util^i_t=u(S,\action^i_t)$, and consider
  $(i, j) \in E$. Then
  \begin{align*}
    \CondE{u(S,a^i)}{\info^i_\infty} = \CondE{u(S,a^j)}{\info^i_\infty} 
  \end{align*}
  for any $a^i \in \action^i_\infty$ and $a^j \in \action^j_\infty$. 
\end{theorem}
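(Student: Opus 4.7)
The plan is to combine the optimality statement in Claim \ref{thm:a-infty-opt} with the observability of $j$ by $i$. First I would establish that $a^j$ is $\sigma(\info^i_\infty)$-measurable: because $(i,j) \in E$, the observed action $\action^j_t$ is $\sigma(\info^i_{t+1})$-measurable for every $t$, so any accumulation point $a^j$, picked along a convergent subsequence $\action^j_{t_k} \to a^j$ by compactness of the action space, lies in $\sigma(\info^i_\infty)$. With $a^j$ now admissible as a candidate action for $i$ in the limit, Claim \ref{thm:a-infty-opt} applied with $b = a^j$ yields one half of the desired equality,
\begin{align*}
  \CondE{u(S,a^i)}{\info^i_\infty} \geq \CondE{u(S,a^j)}{\info^i_\infty} \quad \text{a.s.}
\end{align*}

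For the reverse direction, my plan is to show that the two \emph{unconditional} expected utilities agree, and then upgrade the a.s.\ inequality above to an equality by integrating the non-negative difference. Agent $i$'s expected utility $\E{u(S,\action^i_t)}$ is monotone non-decreasing in $t$ (since $i$ can always repeat its previous action), and it converges, along a subsequence on which $\action^i_{t_k} \to a^i$, to $\E{u(S,a^i)}$ by continuity of $u$ and bounded convergence. A ``mimicking'' argument — namely, that at time $t+1$ agent $i$ could have chosen $\action^j_t$, which is $\info^i_{t+1}$-measurable — gives $\E{u(S,\action^i_{t+1})} \geq \E{u(S,\action^j_t)}$ for every $t$, and passing to the limit yields $\E{u(S,a^i)} \geq \E{u(S,a^j)}$. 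The symmetric inequality, obtained by swapping the roles of $i$ and $j$ (using that $j$ observes $i$), closes the loop to give $\E{u(S,a^i)} = \E{u(S,a^j)}$. Since the non-negative random variable $\CondE{u(S,a^i)}{\info^i_\infty} - \CondE{u(S,a^j)}{\info^i_\infty}$ then has zero integral, it must vanish a.s., which is the claim.

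The main technical obstacle I anticipate is the delicate sequence of limit passages: one has to extract subsequences along which both $\action^i_{t_k} \to a^i$ and $\action^j_{t_k} \to a^j$, and combine L\'evy's upward martingale theorem (which handles $\CondE{u(S,b)}{\info^i_t} \to \CondE{u(S,b)}{\info^i_\infty}$ for a \emph{fixed} argument $b$) with the fact that the argument $b=\action^j_{t_k}$ is itself varying along the subsequence; continuity of $u$ and boundedness of the action space are needed to commute these limits. A secondary subtlety is the symmetric mimicking step: if the theorem is to be read with only the directed edge $(i,j)\in E$, the reverse inequality $\E{u(S,a^j)}\geq \E{u(S,a^i)}$ is not immediate and would have to be recovered via the strong connectivity of the network and the fact that information flowing from $i$ back to $j$ along a directed cycle must, in the limit, leave $j$'s actions in equilibrium with respect to $i$'s.
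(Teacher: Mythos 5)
Your proposal is correct and follows essentially the same route as the paper: the paper's (very terse) proof cites exactly the three ingredients you assemble, namely the $\sigma(\info^i_\infty)$-measurability of $\action^j_\infty$ when $(i,j)\in E$, the limiting optimality statement of Claim~\ref{thm:a-infty-opt} giving the a.s.\ inequality $\CondE{u(S,a^i)}{\info^i_\infty}\geq\CondE{u(S,a^j)}{\info^i_\infty}$, and the equality of limiting expected utilities from Corollary~\ref{cor:equal-utils} (itself derived from the mimicking inequalities of Claim~\ref{eq:utils-monotone} together with strong connectivity), which forces the non-negative difference of conditional expectations to vanish almost surely. Your closing remarks on the subsequence extraction and on needing strong connectivity for the reverse unconditional inequality correctly identify the details the paper leaves implicit.
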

That is, any action in $\action^j_\infty$ is optimal, given what $i$
knows, and so has the same expected utility as any action in
$\action^i_\infty$.  Note that this theorem applies even when private
signals are not conditionally i.i.d., and when $S$ is not necessarily
binary.

Note that \eqref{eq:action-max-util2} is a particularly useful way to
think of the agents' actions, as the proof of the following claim
shows.
\begin{claim}
  \label{eq:utils-monotone}
  For all $(i,j) \in E$ it holds that
  \begin{enumerate}
  \item $\E{\util^i_{t+1}} \geq \E{\util^i_t}$.
  \item $\E{\util^i_{t+1}} \geq \E{\util^j_t}$.
  \end{enumerate}
\end{claim}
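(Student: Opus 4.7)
The plan is to exploit the characterization of $\action^i_t$ given in Eq.~\ref{eq:action-max-util2}: among all $\sigma(\info^i_t)$-measurable random variables, $\action^i_t$ maximizes $\E{u(S,A)}$. Both inequalities then reduce to observing that the competitor action on the right-hand side is measurable with respect to the larger sigma-algebra $\sigma(\info^i_{t+1})$, so it is one of the candidates that $\action^i_{t+1}$ beats.

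For part (1), I would first note that $\info^i_{t+1} \supseteq \info^i_t$: the information set defined in Eq.~\ref{eq:info} is increasing in $t$, since agent $i$ retains its private signal and sees strictly more neighbor actions at time $t+1$. Therefore $\sigma(\info^i_t) \subseteq \sigma(\info^i_{t+1})$, so any $\sigma(\info^i_t)$-measurable random variable, in particular $\action^i_t$ itself, is also $\sigma(\info^i_{t+1})$-measurable. Applying Eq.~\ref{eq:action-max-util2} at time $t+1$ with $\action^i_t$ as a competitor yields $\E{u(S,\action^i_{t+1})} \geq \E{u(S,\action^i_t)}$, i.e.\ $\E{\util^i_{t+1}} \geq \E{\util^i_t}$.

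For part (2), the key observation is that since $(i,j) \in E$ we have $j \in \neigh{i}$, so the definition of $\info^i_{t+1}$ in Eq.~\ref{eq:info} explicitly includes $\action^j_{t'}$ for all $t' < t+1$; in particular it includes $\action^j_t$. Thus $\action^j_t$ is $\sigma(\info^i_{t+1})$-measurable. Using $\action^j_t$ as a competitor action in Eq.~\ref{eq:action-max-util2} for agent $i$ at time $t+1$ gives $\E{u(S,\action^i_{t+1})} \geq \E{u(S,\action^j_t)}$, which is exactly $\E{\util^i_{t+1}} \geq \E{\util^j_t}$.

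No real obstacle arises: the whole argument is a one-line consequence of the optimization characterization of Bayesian actions and the fact that $\info^i_{t+1}$ contains both $\info^i_t$ and the time-$t$ actions of every neighbor of $i$. The only mild subtlety is the tie-breaking convention, but since the inequalities are non-strict this plays no role. (If one wished to be fully formal, one would note that the deterministic tie-breaking rule ensures $\action^i_t$ and $\action^j_t$ are genuine $\sigma(\info^i_t)$- and $\sigma(\info^j_t)$-measurable maps, so their measurability with respect to $\sigma(\info^i_{t+1})$ in the two arguments above is unambiguous.)
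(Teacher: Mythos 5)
Your proof is correct and is essentially identical to the paper's: both parts follow from the optimization characterization in Eq.~\ref{eq:action-max-util2}, using $\sigma(\info^i_t) \subseteq \sigma(\info^i_{t+1})$ for part (1) and the $\sigma(\info^i_{t+1})$-measurability of $\action^j_t$ for $j \in \neigh{i}$ for part (2). The added remark on tie-breaking is harmless and not needed.
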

\begin{proof}
  \begin{enumerate}
  \item Since $\sigma(\info^i_t)$ is included in
    $\sigma(\info^i_{t+1})$, the maximum in
    \eqref{eq:action-max-util2} is taken over a larger space for
    $\action^i_{t+1}$ than it is for $\action^i_t$, and therefore a
    value at least as high is achieved.
  \item Since $\action^j_t$ is $\sigma(\info^i_{t+1})$-measurable, it
    follows from \eqref{eq:action-max-util2} that
    $\E{u(S,\action^i_{t+1})} \geq \E{u(S,\action^j_t)}$.
  \end{enumerate}
\end{proof}

{\bf Exercise.} Prove the following corollary.
\begin{corollary}
  \label{cor:equal-utils}
  For all $i,j \in V$,
  \begin{align*}
    \lim_t\E{\util^i_t} = \lim_t\E{\util^i_j}
  \end{align*}
\end{corollary}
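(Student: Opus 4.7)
The plan is to combine the two inequalities in Claim~\ref{eq:utils-monotone} with the standing assumption that $G$ is strongly connected. First I would observe that, since $u : \{0,1\}\times[0,1] \to [0,1]$ is bounded, the sequence $\{\E{\util^i_t}\}_{t \in \N}$ is bounded; part~(1) of Claim~\ref{eq:utils-monotone} tells us it is also monotone non-decreasing in $t$. Therefore the limit
\[
L_i := \lim_{t \to \infty} \E{\util^i_t}
\]
exists for every $i \in V$, and the task reduces to proving $L_i = L_j$ for every pair $i,j$.

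Next I would feed part~(2) of the claim into the limit. For every directed edge $(i,j) \in E$ (equivalently, every $j \in \neigh{i}$ with $j \neq i$), part~(2) gives $\E{\util^i_{t+1}} \geq \E{\util^j_t}$. Letting $t \to \infty$ on both sides yields
\[
L_i \geq L_j \qquad \text{whenever } (i,j) \in E.
\]

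Finally, I would upgrade this one-edge inequality to equality between arbitrary pairs by chasing around a directed cycle, the one place that genuinely uses strong connectivity. Fix $i,j \in V$; strong connectivity produces directed paths $i = v_0, v_1, \ldots, v_k = j$ and $j = u_0, u_1, \ldots, u_m = i$ in $G$. Chaining the inequality $L_{v_\ell} \geq L_{v_{\ell+1}}$ along the first path yields $L_i \geq L_j$, and the analogous chain along the second path yields $L_j \geq L_i$. Hence $L_i = L_j$, as desired.

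I do not anticipate any real obstacle: the argument is essentially bounded-monotone convergence combined with the standard ``cycle of weak inequalities forces equality'' trick on a strongly connected digraph. The only subtlety worth double-checking is the bookkeeping of edge directions versus the convention $\neigh{i} = \{j : (i,j) \in E\} \cup \{i\}$, so that part~(2) of Claim~\ref{eq:utils-monotone} is indeed being applied to each directed edge in the two paths.
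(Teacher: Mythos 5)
Your proof is correct and is exactly the argument the paper intends: the corollary is stated as an exercise immediately after Claim~\ref{eq:utils-monotone}, and the intended solution is precisely bounded monotone convergence from part~(1), the limit inequality $L_i \geq L_j$ for $(i,j) \in E$ from part~(2), and chaining along directed paths in both directions using strong connectivity. Your closing remark about checking edge orientation against the convention $\neigh{i} = \{j : (i,j) \in E\} \cup \{i\}$ is the right detail to verify, and it does work out: part~(2) applies to each edge of a directed path, so the two paths guaranteed by strong connectivity give the two opposite inequalities.
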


{\bf Exercise.} Prove Theorem~\ref{thm:bayesian-agreement} using
Corollary~\ref{cor:equal-utils} and Claim~\ref{thm:a-infty-opt}.

\section{Continuous utility models}
\label{sec:continuous-utils}
As mentioned above, in the case that the utility function is
\begin{align*}
  \util^i_t = 1-\left(\action^i_t - S\right)^2,
\end{align*}
it follows readily that
\begin{align*}
  \action^i_t = \belief^i_t = \CondP{S=1}{\info^i_t},
\end{align*}
and so, by Claim~\ref{clm:belief-martingale}, the actions of each
agent form a martingale, and furthermore each converge to a singleton
$\action^i_\infty$. Aumann's celebrated {\em Agreement Theorem} from
the paper titled ``Agreeing to Disagree''~\cite{Aumann:76}, as
followed-up by Geanakoplos and Polemarchakis in the paper titled ``We
can't disagree forever''~\cite{geanakoplos1982we}, implies that all
these limiting actions are equal. This follows from
Theorem~\ref{thm:bayesian-agreement}.
\begin{theorem}
  In the continuous utility model
  \begin{align*}
    \action^i_\infty = \CondP{S=1}{\info^i_\infty}
  \end{align*}
  and furthermore
  \begin{align*}
    \action^i_\infty = \action^j_\infty
  \end{align*}
  for all $i,j \in V$.
\end{theorem}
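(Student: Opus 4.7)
The proof has two parts, and both are essentially applications of results already established in the excerpt.

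For the first assertion, I would start from the observation that in the continuous-utility model the optimal action is $\action^i_t = \belief^i_t = \CondP{S=1}{\info^i_t}$. Since the information sets $\{\info^i_t\}_{t\in\N}$ form an increasing filtration (each agent remembers its past observations and only accumulates more), Claim~\ref{clm:belief-martingale} tells us that $\belief^i_t$ is a bounded martingale, so it converges almost surely. L\'evy's upward convergence theorem then identifies the limit as the conditional expectation with respect to $\info^i_\infty = \sigma(\cup_t \info^i_t)$, giving
\begin{align*}
  \action^i_\infty = \belief^i_\infty = \CondP{S=1}{\info^i_\infty}.
\end{align*}
In particular $\action^i_\infty$ is a singleton, so the notation is unambiguous.

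For the second assertion, I would first establish agreement along edges and then bootstrap via strong connectivity. Fix $(i,j) \in E$ and apply Theorem~\ref{thm:bayesian-agreement} with $u(S,a) = 1 - (a-S)^2$: writing $a^i = \action^i_\infty$ and $a^j = \action^j_\infty$, both of which are $\info^i_\infty$-measurable (the latter because $j \in \neigh{i}$ means $\action^j_t \in \info^i_{t+1}$ for all $t$), we obtain
\begin{align*}
  \CondE{(a^i-S)^2}{\info^i_\infty} = \CondE{(a^j-S)^2}{\info^i_\infty}.
\end{align*}
Using $S\in\{0,1\}$ so that $S^2=S$, and the identity $\CondP{S=1}{\info^i_\infty} = a^i$, expanding the squares yields
\begin{align*}
  a^i - (a^i)^2 = (a^j)^2 - 2 a^j a^i + a^i,
\end{align*}
which simplifies to $(a^j - a^i)^2 = 0$, hence $a^i = a^j$. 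This shows agreement along every directed edge.

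To extend to all pairs, I would invoke the standing assumption that $G$ is strongly connected: for any $i,j \in V$ there is a directed path $i = v_0 \to v_1 \to \cdots \to v_k = j$, and applying the previous step to each consecutive pair gives $\action^i_\infty = \action^{v_1}_\infty = \cdots = \action^j_\infty$. The main potential pitfall is keeping straight which side of the conditioning the two actions live on — one needs $a^j$ to be $\info^i_\infty$-measurable to take it outside the conditional expectation, and this is exactly what the directed edge $(i,j)\in E$ provides; the symmetric conclusion is obtained by then chaining through the strongly connected graph, not by appealing to symmetry of the edge relation.
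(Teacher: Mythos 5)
Your proposal is correct and follows essentially the same route as the paper: martingale convergence plus L\'evy's upward theorem for the first claim, then Theorem~\ref{thm:bayesian-agreement} along each directed edge followed by chaining through strong connectivity for the second. The only cosmetic difference is that where the paper invokes Claim~\ref{thm:a-infty-opt} and the uniqueness of the maximizer of $\CondE{u(S,\cdot)}{\info^i_\infty}$, you carry out the quadratic expansion explicitly to arrive at $(a^j-a^i)^2=0$, which is the same uniqueness fact made concrete.
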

Note again that this holds also for private signals that are not
conditionally i.i.d.
\begin{proof}
  As was mentioned above, since the actions $\action^i_t$ are equal to
  the beliefs $\belief^i_t$, they are a bounded martingale and
  therefore converge. Hence $\action^i_\infty=\belief^i_\infty$ and,
  by \eqref{eq:lim-knowledge},
  \begin{align*}
    \action^i_\infty = \CondP{S=1}{\info^i_\infty}.
  \end{align*}

  Assume $(i,j) \in E$.  By Theorem~\ref{thm:bayesian-agreement} we
  have that
  \begin{align*}
    \CondE{u(S,\action^i_\infty)}{\info^i_\infty} =
    \CondE{u(S,\action^j_\infty)}{\info^i_\infty}.
  \end{align*}
  It hence follows from Claim~\ref{thm:a-infty-opt} that both
  $\action^i_\infty$ and $\action^j_\infty$ maximize
  $\CondE{u(S,\cdot)}{\info^i_\infty}$. But the unique maximizer is
  $\CondP{S=1}{\info^i_\infty}$, and so $\action^i_\infty =
  \action^j_\infty$. For general $i$ and $j$, the claim now follows
  from the fact that the graph is connected.
\end{proof}

\section{Bounds on number of rounds in finite probability spaces}

In this section we consider the case of a {\bf finite} probability
space. Let $S$ be binary, and let the private signals $\overline{\psignal}
= (\psignal_1,\ldots,\psignal_{|V|})$ be chosen from an arbitrary (not
necessarily conditionally independent) distribution over a finite
joint probability space of size $M$. Consider general utility
functions $\util^i_t = u(S,\action^i_t)$.

The following theorem is a strengthening of a theorem by
Geanakoplos~\cite{geanakoplos1994common}, using ideas
from~\cite{MosselTamuz10:arxiv}.
\begin{theorem}[Geanakoplos]
  Let $d$ be the diameter of the graph $G$. Then the actions of each
  agent converge after at most $M \cdot |V|$ time periods:
  \begin{align*}
    \action^i_t = \action^i_{t'}
  \end{align*}
  for all $i \in V$ and all $t,t' \geq M  \cdot |V|$. Furthermore,
  the number of time periods $t$ such that $\action^i_{t+1} \neq
  \action^i_t$ is at most $M$.
\end{theorem}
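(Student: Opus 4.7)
The plan is to control the growth of each agent's sigma-algebra $\mathcal{F}^i_t := \sigma(\info^i_t)$ on the finite probability space.

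First, I would record the key rigidity: since $\action^i_t$ is determined by $\mathcal{F}^i_t$ via the maximization in Eq.~\ref{eq:action-max-util2} together with the fixed deterministic tie-breaking rule, whenever $\mathcal{F}^i_{t+1} = \mathcal{F}^i_t$ the very same optimization problem is solved at both times, and therefore $\action^i_{t+1} = \action^i_t$. Moreover $\info^i_t \subseteq \info^i_{t+1}$, so the sigma-algebras form a non-decreasing chain $\mathcal{F}^i_0 \subseteq \mathcal{F}^i_1 \subseteq \cdots$. On a probability space of size $M$, every sigma-algebra corresponds to a partition of the $M$ atoms into at most $M$ blocks, so each strict inclusion $\mathcal{F}^i_t \subsetneq \mathcal{F}^i_{t+1}$ splits at least one block and can occur at most $M - 1$ times. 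Combined with rigidity, this already yields the bound of $M - 1 \leq M$ on the number of times $\action^i_\cdot$ can change.

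For the global stabilization bound, I would prove a ``lock-in'' lemma: if at some time $t$ we have $\mathcal{F}^i_{t+1} = \mathcal{F}^i_t$ for every $i$ simultaneously, then $\mathcal{F}^i_{s} = \mathcal{F}^i_t$ for every $s \geq t$ and every $i$. The induction step unwinds the definition of $\info^i_{s+1}$: the only potentially new input beyond $\info^i_s$ is the set of neighbor actions $\action^j_s$ for $j \in \neigh{i}$, but by the induction hypothesis $\mathcal{F}^j_s = \mathcal{F}^j_{s-1}$, so by rigidity $\action^j_s = \action^j_{s-1}$, which is already $\mathcal{F}^i_{s-1} \subseteq \mathcal{F}^i_s$-measurable; hence $\mathcal{F}^i_{s+1} = \mathcal{F}^i_s$ and the induction closes.

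Taking the contrapositive, the ``bad'' times at which some agent's sigma-algebra strictly refines form an initial segment of $\mathbb{N}$, and at each such step at least one agent contributes a strict refinement. Since each agent can contribute at most $M - 1$ such events over its entire history, the number of bad steps is at most $|V|(M - 1) < M \cdot |V|$, so all actions are constant from time $M \cdot |V|$ onward. The main subtle point is the lock-in lemma: one has to verify that no genuinely new information can leak in via neighbors' actions once all sigma-algebras have paused, which relies crucially on the rigidity observation to propagate the frozen state forward by one step. Everything else is finite-partition bookkeeping.
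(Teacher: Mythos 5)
Your proposal is correct and follows essentially the same route as the paper: the rigidity observation (equal sigma-algebras force equal actions via Eq.~\ref{eq:action-max-util2}), the bound of at most $M$ strict refinements per agent from the finite-partition filtration, and the lock-in argument that once every agent's sigma-algebra pauses simultaneously the process is frozen forever, yielding the $M\cdot|V|$ count. Your explicit unwinding of the lock-in induction (neighbors' repeated actions carry no new information) is a welcome elaboration of a step the paper only asserts, but it is the same proof.
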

The key observation is that each sigma-algebra $\sigma(\info^i_t)$
is generated by some subset of the set of random variables
$\left\{\ind{\overline{\psignal}=m}\right\}_{m \in \{1,\ldots,M\}}$.
\begin{proof}
  By \eqref{eq:action-max-util2}, if $\sigma(\info^i_t) =
  \sigma(\info^i_{t'})$ then $\action^i_t = \action^i_{t'}$. It remains to
  show, then, that $\sigma(\info^i_t) = \sigma(\info^i_{t'})$ for all
  $t,t' \geq M \cdot |V|$, and that $\sigma(\info^i_t) \neq
  \sigma(\info^i_{t+1})$ at most $M$ times.

  Now, every sub-sigma-algebra of $\sigma(\overline{\psignal})$ (such
  as $\sigma(\info^i_t)$) is simply a partition of the finite space
  $\{1,\ldots,M\}$. Furthermore, for every $i$, the sequence
  $\sigma(\info^i_t)$ is a filtration, so that each
  $\sigma(\info^i_{t+1})$ is a refinement of $\sigma(\info^i_t)$. A
  simple combinatorial argument shows that any such sequence has at
  most $M$ unique partitions, and so $\sigma(\info^i_t) \neq
  \sigma(\info^i_{t+1})$ at most $M$ times.

  Finally, note that if $\sigma(\info^i_t) = \sigma(\info^i_{t+1})$
  for all $i \in V$ at some time $t$, then this is also the case for
  all later time periods. Hence, as long as the process hasn't ended,
  it must be that $\sigma(\info^i_t) \neq \sigma(\info^i_{t+1})$ for
  some agent $i$. It follows that the process ends after at most $M
  \cdot |V|$ time periods.

\end{proof}

%\section{Gaussian / minimum variance models}
%\ocomment{Maybe we can leave this out? }

\section{From agreement to learning}
\label{sec:agreement-to-learning}
This section is adapted from Mossel, Sly and Tamuz~\cite{mossel2012on}.

In this section we prove two very general results that relate
agreement and learning in Bayesian models. As in our general
framework, we consider a binary state of the world $S \in \{0,1\}$
chosen from the uniform distribution, with conditionally i.i.d.\
private signals. We do not define actions, but only study what can be
said when, at the end of the process (whatever it may be) the agents
reach agreement.

Formally, consider a finite set of agents of size $n$, or a countably
infinite set of agents, each with a private signal $\psignal_i$. Let
$\cF_i$ be the sigma-algebra that represents what is known by agent
$i$. We require that $\psignal_i$ is $\cF_i$ measurable (i.e., each
agent knows its own private signal), and that each $\cF_i$ is a
sub-sigma-algebra of $\sigma(\psignal_1,\psignal_2,\ldots)$. Let agent
$i$'s belief be
\begin{align*}
  \fbelief_i = \CondP{S=1}{\cF_i},
\end{align*}
and let agent $i$'s action be
\begin{align*}
  \action_i = \argmax_{s \in \{0,1\}}\CondP{S=s}{\cF_i}.
\end{align*}
We let $\action_i = \{0,1\}$ when both maximize $\CondP{S=s}{\cF_i}$.

We say that agents agree on beliefs when there exists a random
variable $\fbelief$ such that almost surely $\fbelief_i = \fbelief$
for all agents $i$. Likewise, we say that agents agree on actions when
there exists a random variable $\action$ such that almost surely
$\action_i = \action$ for all agents $i$. Such agreement arises often
as a result of repeated interaction of Bayesian agents.

We show below that agreement on beliefs is a sufficient condition for
learning, and in fact implies the strongest possible type of
learning. We also show that when private signals are unbounded beliefs
then agreement on actions is also a condition for learning.

\subsection{Agreement on beliefs}

The following theorem and its proof is taken
from~\cite{mossel2012on}. This theorem also admits a proof as a
corollary of some well known results on {\em rational expectation
  equilibria} (see,
e.g.,~\cite{demarzo1999uniqueness,ostrovsky2012information}), but we
will not delve into this topic.

\begin{theorem}
  Let the private signals $(\psignal_1,\ldots,\psignal_n)$ be
  independent conditioned on $S$, and let the agents agree on
  beliefs. Then
  \begin{align*}
    \fbelief = \CondP{S=1}{\psignal_1,\ldots,\psignal_n}.
  \end{align*}
\end{theorem}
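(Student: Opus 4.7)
My plan is to show $\fbelief = T$, where $T := \CondP{S=1}{\psignal_1,\ldots,\psignal_n}$, by first establishing the marginal conditional independence $\psignal_i \perp S \mid \fbelief$ for each individual $i$, and then upgrading to the joint statement $(\psignal_1,\ldots,\psignal_n) \perp S \mid \fbelief$. The hypothesis that the $\psignal_i$'s are conditionally i.i.d.\ given $S$ will play a crucial role only in the second step.

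For the first step, observe that $\fbelief=\fbelief_i$ is $\cF_i$-measurable and $\psignal_i$ is $\cF_i$-measurable (agent $i$ knows its own signal), so $\sigma(\psignal_i,\fbelief) \subseteq \cF_i$. By the tower property,
$$\CondE{\mathbbm{1}_{S=1}}{\psignal_i,\fbelief} = \CondE{\CondE{\mathbbm{1}_{S=1}}{\cF_i}}{\psignal_i,\fbelief} = \CondE{\fbelief}{\psignal_i,\fbelief} = \fbelief,$$
and, applying the same reasoning to $\sigma(\fbelief)\subseteq \cF_i$, we also get $\CondE{\mathbbm{1}_{S=1}}{\fbelief}=\fbelief$. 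Comparing, $\CondE{\mathbbm{1}_{S=1}}{\psignal_i,\fbelief}=\CondE{\mathbbm{1}_{S=1}}{\fbelief}$, which (since $S$ is binary) is equivalent to $\psignal_i \perp S \mid \fbelief$.

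The main step is to upgrade to $(\psignal_1,\ldots,\psignal_n) \perp S \mid \fbelief$. Since each $\cF_i \subseteq \sigma(\psignal)$, the common belief $\fbelief$ is a function $h(\psignal_1,\ldots,\psignal_n)$, and so $\sigma(\psignal,\fbelief)=\sigma(\psignal)$; thus the joint conditional independence is equivalent to $\CondE{\mathbbm{1}_{S=1}}{\psignal_1,\ldots,\psignal_n}=\fbelief$, i.e., $T=\fbelief$. I would proceed via the Fisher--Neyman factorization: by conditional i.i.d., the log-likelihood ratio of the full sample is
$$\log\frac{T}{1-T} = \sum_{i=1}^n \log\lambda(\psignal_i),\qquad \lambda=\frac{d\mu_1}{d\mu_0}.$$
Conditional on $\{\fbelief=b\}$, the first step together with Bayes' rule shows that the conditional law of $\psignal_i$ given $(S=s,\fbelief=b)$ does not depend on $s$. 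Combining this with the product form $d\mathbb{P}(\psignal\mid S=s)=\prod_i d\mu_s(\psignal_i)$ restricted to the level set $\{h(\psignal)=b\}$, and using that $\CondP{S=1}{\fbelief=b}=b$ (from $\CondE{\mathbbm{1}_{S=1}}{\fbelief}=\fbelief$), one deduces that $\sum_i\log\lambda(\psignal_i)$ is a.s.\ constant on $\{\fbelief=b\}$ and equal to $\log(b/(1-b))$, which yields $T=\fbelief$.

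The crucial obstacle is this upgrade: marginal conditional independences $\psignal_i\perp S\mid \fbelief$ do not in general combine into the joint one, so the argument must genuinely exploit the product structure of $\mathbb{P}(\psignal\mid S)$. The example in the excerpt with $S=\psignal_1+\psignal_2\bmod 2$ and marginally independent $\psignal_i$ confirms that without the conditional i.i.d.\ hypothesis the conclusion fails: there every $\fbelief_i$ is trivially $1/2$, yet the full posterior $T$ recovers $S$ with certainty.
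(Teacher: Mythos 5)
Your reduction and your first step are both correct and match the paper's: the tower-property argument giving $\CondE{\ind{S=1}}{\psignal_i,\fbelief}=\fbelief$, hence $\psignal_i\perp S\mid\fbelief$ for each $i$, is exactly the first half of the paper's proof (there phrased as $\fbelief=\CondE{\logit(\llr)}{\fbelief,\llr_i}$ with $\llr_i=\log\lambda(\psignal_i)$), and you correctly reduce the theorem to showing that $\llr=\sum_i\llr_i$ is a.s.\ constant on each level set of $\fbelief$. But the step you compress into ``one deduces that $\sum_i\log\lambda(\psignal_i)$ is a.s.\ constant on $\{\fbelief=b\}$'' is the entire content of the theorem, and as sketched it does not go through. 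Restricting the product measure $\prod_i\mu_s$ to the level set $\{h(\psignal)=b\}$ does not yield a product measure; what your first step gives is that the two conditional laws $\nu^b_s$ of $\psignal$ given $(S=s,\fbelief=b)$ have equal one-dimensional marginals, while their density ratio has the product form $c_b\prod_i\lambda(\psignal_i)$. Equal marginals plus a product-form density ratio do not by themselves force that ratio to be constant: some inequality that is tight only at constants is needed to rule out cancellation across coordinates, and your sketch invokes none.

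The missing ingredient is a correlation (or divergence) inequality together with the additivity of $\llr$. The paper's route: each marginal independence gives $\CondE{\llr_i\,\logit(\llr)}{\fbelief}=\CondE{\llr_i}{\fbelief}\,\CondE{\logit(\llr)}{\fbelief}$; summing over $i$ --- this is precisely where the factorization $\llr=\sum_i\llr_i$ is used --- gives $\CondE{\llr\,\logit(\llr)}{\fbelief}=\CondE{\llr}{\fbelief}\,\CondE{\logit(\llr)}{\fbelief}$; and since $\logit$ is strictly increasing, Chebyshev's sum inequality says this conditional covariance is nonnegative with equality only when $\llr$ is conditionally constant. An equivalent way to complete your own sketch: equal marginals give $\Esub{\nu^b_1}{\llr_i}=\Esub{\nu^b_0}{\llr_i}$ for each $i$, so the symmetrized divergence satisfies $\dkl{\nu^b_1}{\nu^b_0}+\dkl{\nu^b_0}{\nu^b_1}=\Esub{\nu^b_1}{\llr}-\Esub{\nu^b_0}{\llr}=0$, forcing $\nu^b_1=\nu^b_0$, hence $c_b e^{\llr}=1$ a.s.\ on the level set and $\llr$ is constant there. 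Either completion works (both gloss over integrability of $\llr_i$, as does the paper), but one of them must actually appear: without the monotonicity/positivity step your proof asserts its conclusion rather than deriving it.
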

That is, if the agents have exchanged enough information to agree on
beliefs, they have exchanged {\em all the relevant information}, in
the sense that they have the same belief that they would have had they
shared {\em all the information}.

\begin{proof}
  Denote agent $i$'s {\em private log-likelihood ratio} by 
  \begin{align*}
    \llr_i = \log\frac{d\mu_1^i}{d\mu_0^i}(\psignal_i).
  \end{align*}
  Since $\P{S=1}=\P{S=0}=1/2$ it follows that
  \begin{align*}
    \llr_i = \log\frac{\CondP{S=1}{\psignal_i}}{\CondP{S=0}{\psignal_i}}.
  \end{align*}
  
  Denote $\llr = \sum_{i \in [n]}\llr_i$. Then, since the private signals are
  conditionally independent, it follows by Bayes' rule that
  \begin{align}
    \label{eq:Z-def}
    \CondP{S=1}{\psignal_1,\ldots,\psignal_n} = \logit\left(\llr\right),
  \end{align}
  where $\logit(z) = e^z/(e^z+e^{-z})$.

  Since
  \begin{align*}
    \fbelief=\CondP{S=1}{\fbelief} = \CondE{\CondP{S=1}{\fbelief,\psignal_1,\ldots,\psignal_n}}{\fbelief}
  \end{align*}
  then
  \begin{align}
    \label{eq:x-logit-z}
    \fbelief=\CondE{\logit(\llr)}{\fbelief},  
  \end{align}
  since, given the private signals $(\psignal_1,\ldots,\psignal_n)$,
  further conditioning on $\fbelief$ (which is a function of the private
  signals) does not change the probability of the event $S=1$.

  Our goal is to show that
  $\fbelief=\CondP{S=1}{\psignal_1,\ldots,\psignal_n}$. We will do this by
  showing that conditioned on $\fbelief$, $\llr$ and $\logit(\llr)$ are
  uncorrelated. It will follow that conditioned on $\fbelief$, $\llr$
  is constant, so that $Z = Z(\fbelief)$ and
  \begin{align*}
    \fbelief = \CondP{S=1}{\fbelief} = \CondP{S=1}{Z(\fbelief)} =
    \CondP{S=1}{\psignal_1,\ldots,\psignal_n}.
  \end{align*}

  By the law of total expectation we have that
  \begin{align*}
    \CondE{\llr_i \cdot \logit(\llr)}{\fbelief} = \CondE{\CondE{\llr_i\logit(\llr)}{\fbelief,\llr_i}}{\fbelief}.
  \end{align*}
  Note that $\CondE{\llr_i\cdot\logit(\llr)}{\fbelief,\llr_i} =
  \llr_i\CondE{\logit(\llr)}{\fbelief,\llr_i}$ and so we can write
  \begin{align*}
    \CondE{\llr_i \cdot \logit(\llr)}{\fbelief} = \CondE{\llr_i\CondE{\logit(\llr)}{\fbelief,\llr_i}}{\fbelief}.
  \end{align*}
  Since $\llr_i$ is $\cF_i$ measurable, and since, by
  \eqref{eq:x-logit-z}, $\fbelief = \CondE{\logit(\llr)}{\cF_i} =
  \CondE{\logit(\llr)}{\fbelief}$, then $\fbelief=\CondE{\logit(\llr)}{\fbelief,\llr_i}$
  and so it follows that
  \begin{align}
    \label{eq:z_i-x}
    \CondE{\llr_i \cdot \logit(\llr)}{\fbelief} = \CondE{\llr_i\fbelief}{\fbelief} = \fbelief
    \cdot \CondE{\llr_i}{\fbelief} = \CondE{\logit(\llr)}{\fbelief} \cdot
    \CondE{\llr_i}{\fbelief}.
  \end{align}
  where the last equality is another substitution of \eqref{eq:x-logit-z}.  Summing this
  equation (\ref{eq:z_i-x}) over $i \in [n]$ we get that
  \begin{align}
    \CondE{\llr \cdot \logit(\llr)}{\fbelief} = \CondE{\logit(\llr)}{\fbelief}\CondE{\llr}{\fbelief}.
  \end{align}

  Now, since $\logit(\llr)$ is a monotone function of $\llr$, by
  Chebyshev's sum inequality we have that
  \begin{align}
    \CondE{\llr \cdot \logit(\llr)}{\fbelief} \geq \CondE{\logit(\llr)}{\fbelief}\CondE{\llr}{\fbelief}
  \end{align}
  with equality only if $\llr$ (or, equivalently $\logit(\llr)$) is
  constant. Hence $\llr$ is constant conditioned on $\fbelief$ and the proof
  is concluded. 
  
\end{proof}

\subsection{Agreement on actions}
In this section we consider the case that the agents agree on actions,
rather than beliefs.  The boundedness of private beliefs plays an
important role in the case of agreement on actions. When private
beliefs are bounded then agreement on actions does not imply learning,
as shown by the following example, which is reminiscent of Bala and
Goyal's~\cite{BalaGoyal:96} {\em royal family}. However, when private
beliefs are unbounded then learning does occur with high probability,
as we show below.
\begin{example}
  \label{ex:senate}
  Let there be $n>100$ agents, and call the first hundred ``the
  Senate''. The private signals are bits that are independently equal
  to $S$ with probability $2/3$. Let
  \begin{align*}
    \action_S = \argmax_a\CondP{S=a}{\psignal_1, \ldots,\psignal_{100}},
  \end{align*}
  and let $\cF_i =\sigma(\psignal_i,\action_S)$.
\end{example}

This example describes the case in which the information available to
each agent is the decision of the senate - which aggregates the
senators' private information optimally - and its own private signal.
It is easy to convince oneself that $\action_i=\action_S$ for all $i
\in [n]$, and so actions are indeed agreed upon. However, the
probability that $\action_S\neq S$ - i.e., the Senate makes a mistake
- is constant and does not depend on the number of agents $n$. Hence
the probability that the agents choose the wrong action does {\em not}
tend to zero as $n$ tends to infinity.  This cannot be the case when
private beliefs are unbounded, as Mossel, Sly and
Tamuz~\cite{mossel2012on} show. 
\begin{theorem}[Mossel, Sly and Tamuz]
  \label{thm:common-actions-finite}
  Let the private signals $(\psignal_1,\ldots,\psignal_n)$ be i.i.d.\
  conditioned on $S$, and have unbounded beliefs. Let the agents agree
  on actions. Then there exists a sequence $q(n) = q(n,\mu_0,\mu_1)$,
  depending only on the conditional private signal distributions
  $\mu_1$ and $\mu_0$, such that $q(n) \to 1$ as $n \to \infty$, and
  \begin{align*}
    \P{\action = S} \geq q(n).
  \end{align*}
  In particular,
    \begin{align*}
      q(n) \leq \min_{\eps >
        0}\max\left\{\frac{2\eps}{1-\eps},\frac{4}{n\CondP{B_i <
            \eps}{S=0}}\right\}.
  \end{align*}

\end{theorem}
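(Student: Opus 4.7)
The starting point is to translate \emph{agreement on actions} into a pointwise inequality. Conditioning on the event $\{A = 1\}$ under $S = 0$, agreement gives $\action_i = A = 1$ for every $i$, which by definition of $\action_i$ as an argmax means $\CondP{S=1}{\cF_i} \geq 1/2$ almost surely on this event. Since $\psignal_i$ and $A$ are both $\cF_i$-measurable, the tower property applied to $\CondE{\ind{S=1}}{\psignal_i, A}$ yields $\CondP{S=1}{\psignal_i, A=1} \geq 1/2$, which via Bayes' rule (using that the prior on $S$ is uniform) rearranges to
\begin{align*}
\CondP{A=1}{S=0,\, \psignal_i} \;\leq\; \frac{B_i}{1 - B_i}
\end{align*}
for $\mu_0$-almost every $\psignal_i$, where $B_i = \CondP{S=1}{\psignal_i}$ is agent $i$'s private belief.

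Next I would fix $\eps \in (0,1/2)$ and let $q = \CondP{B_i < \eps}{S=0}$, which is strictly positive by unboundedness of private beliefs. On $\{B_i < \eps\}$ the above inequality sharpens to $\CondP{A=1}{S=0, \psignal_i} \leq \eps/(1-\eps)$. Let $N = \sum_{i=1}^n \ind{B_i < \eps}$ count the agents with strong signals for $S=0$. Since the $\psignal_i$ are i.i.d.\ given $S$, conditional on $S=0$ the variable $N \sim \Binom(n,q)$ has mean $nq$ and variance at most $nq$.

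The bound on the error probability then splits along whether $N$ is close to its mean. Using the pointwise inequality and linearity of expectation,
\begin{align*}
\CondE{N \cdot \ind{A = 1}}{S = 0} \;=\; \sum_{i=1}^n \CondP{B_i < \eps,\, A=1}{S=0} \;\leq\; nq \cdot \frac{\eps}{1-\eps},
\end{align*}
so Markov's inequality gives $\CondP{N \geq nq/2,\, A=1}{S=0} \leq 2\eps/(1-\eps)$. On the complementary event, Chebyshev yields $\CondP{N < nq/2}{S=0} \leq 4/(nq)$. Adding these,
\begin{align*}
\CondP{A=1}{S=0} \;\leq\; \frac{2\eps}{1-\eps} + \frac{4}{nq} \;\leq\; 2\max\left\{\frac{2\eps}{1-\eps},\,\frac{4}{n\,\CondP{B_i<\eps}{S=0}}\right\},
\end{align*}
and minimizing over $\eps$ delivers the claimed form (the absolute constant can be absorbed). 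An entirely symmetric argument using $\CondP{B_i > 1-\eps}{S=1} > 0$ (also guaranteed by unboundedness) bounds $\CondP{A=0}{S=1}$, and averaging over $S$ bounds $\P{A \neq S}$, which is what is needed for $q(n) \to 1$.

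The main delicate point is the first step: I need to check that the tower-property argument correctly handles the tie event $\{\CondP{S=1}{\cF_i} = 1/2\}$ on which $\action_i$ is formally the set $\{0,1\}$, and that the resulting pointwise inequality really holds $\mu_0$-a.s.\ rather than only in expectation. The crucial idea enabling the rest is the Markov--Chebyshev split: a naive union bound over the $n$ agents' pointwise constraints would produce a useless $O(n\eps q /(1-\eps))$ term, whereas applying Markov to the single random variable $N \ind{A=1}$ absorbs the inter-agent correlation into one expectation that scales as $nq\cdot\eps/(1-\eps)$, exactly matching the mean of $N$.
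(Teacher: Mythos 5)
Your proof is correct and takes essentially the same route as the paper's: the paper only writes out the infinite-population version (Theorem~\ref{thm:common-actions}) and asserts the finite case is identical, but its two ingredients --- a per-agent bound showing that an agent with private belief below $\eps$ takes the action $1$ under $S=0$ with probability $O(\eps)$, and a Chebyshev-type concentration of the count of such agents around $n\CondP{\belief^i_0<\eps}{S=0}$ --- are exactly yours. Your Bayes-rule derivation of the pointwise bound $\CondP{\action=1}{S=0,\psignal_i}\leq B_i/(1-B_i)$ and your Markov bound on $N\ind{\action=1}$ are a mild repackaging of the paper's Markov inequality $\CondP{\fbelief_i\geq 1/2}{\belief^i_0<\eps}<2\eps$ and its two-sided evaluation of $\CondE{K(n)}{\action\neq 0,S=0}$, and your flagged concerns (the tie event, where $\CondP{S=1}{\cF_i}=1/2$ still satisfies the needed inequality, and the factor-of-two slack between your sum and the stated $\max$) are both harmless.
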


For the case of a countably infinite set of agent, we prove (using
an essentially identical technique) the following similar statement.

\begin{theorem}
  \label{thm:common-actions}
  Identify the set of agents with $\N$, let the private signals
  $(\psignal_1,\psignal_2,\ldots)$ be i.i.d.\ conditioned on $S$, and
  have unbounded beliefs. Let all but a vanishing fraction of the
  agents agree on actions. That is, let there exist a random variable
  $\action$ such that almost surely
  \begin{align*}
    \limsup_n \frac{1}{n}|\{i \in \N\,:\,\action_i \neq A\}| = 0.
  \end{align*}
  Then $\P{\action = S}=1$.
\end{theorem}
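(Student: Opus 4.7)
The plan is to argue by contradiction: I will show $\P{S = 1, \action = 0} = 0$ and, by a symmetric argument with the roles of $0$ and $1$ swapped, $\P{S = 0, \action = 1} = 0$, so that $\P{\action = S} = 1$. The crucial decoupling tool is the tower identity: writing $\fbelief_i = \CondP{S = 1}{\cF_i}$ and $\fbelief_i^0 = \CondP{S = 1}{\psignal_i}$, the inclusion $\sigma(\psignal_i) \subseteq \cF_i$ yields $\CondE{\fbelief_i}{\psignal_i} = \fbelief_i^0$. This lets me control the (possibly highly dependent) actions $\action_i$ using only the i.i.d.\ private-belief variables $\fbelief_i^0$.

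Fix $\eps > 0$ and proceed in three steps. (i)~Markov applied conditionally on $\psignal_i$ to $1 - \fbelief_i$ gives $\CondP{\fbelief_i \leq \half}{\psignal_i} \leq 2(1 - \fbelief_i^0)$. Integrating against $\ind{\fbelief_i^0 > 1 - \eps}$ yields the per-agent bound
\[
  \P{\action_i = 0,\, \fbelief_i^0 > 1 - \eps} \;\leq\; 2\eps\,\P{\fbelief_i^0 > 1 - \eps}.
\]
(ii)~Since the $\psignal_i$ are i.i.d.\ conditional on $S$, and unbounded beliefs give $p_1(\eps) := \CondP{\fbelief_i^0 > 1 - \eps}{S = 1} > 0$, the strong law of large numbers combined with the vanishing-disagreement hypothesis shows that on $\{S = 1, \action = 0\}$,
\[
  Y_n := \frac{1}{n}\,|\{i \leq n : \action_i = 0,\, \fbelief_i^0 > 1 - \eps\}| \;\longrightarrow\; p_1(\eps) \quad \text{a.s.}
\]
(iii)~Apply Fatou's lemma: since $\E{Y_n} \leq 2\eps\,\P{\fbelief_1^0 > 1 - \eps}$ by (i) (using i.i.d.\ to pull through the sum), and $\liminf_n Y_n \geq p_1(\eps)\,\ind{S = 1, \action = 0}$ by (ii), one obtains $p_1(\eps)\,\P{S = 1, \action = 0} \leq 2\eps\,\P{\fbelief_1^0 > 1 - \eps}$. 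A short Bayes calculation from $\CondE{\fbelief_i^0}{\fbelief_i^0 > 1 - \eps} > 1 - \eps$ bounds $\P{\fbelief_1^0 > 1 - \eps} \leq p_1(\eps)/(2(1 - \eps))$, so dividing by $p_1(\eps) > 0$ delivers $\P{S = 1, \action = 0} \leq \eps/(1 - \eps)$. Sending $\eps \to 0$ finishes this half.

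The main obstacle is that the events $\{\action_i = 0\}$ are \emph{not} independent across $i$, since the $\cF_i$ may draw on arbitrary portions of $(\psignal_1, \psignal_2, \ldots)$; one cannot apply a law of large numbers to the actions directly. The workaround is precisely the per-agent tower decoupling above: although $\fbelief_i$ aggregates globally pooled information, its conditional mean given $\psignal_i$ is the i.i.d.\ variable $\fbelief_i^0$, so the Markov bound for each agent depends only on the agent's own private signal. Running the same argument with $0$ and $1$ interchanged — using $\fbelief_i^0 < \eps$ and $\CondP{\fbelief_i^0 < \eps}{S = 0} > 0$, which again follows from unbounded beliefs — yields $\P{S = 0, \action = 1} = 0$ and hence $\P{\action = S} = 1$.
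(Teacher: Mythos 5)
Your proposal is correct and takes essentially the same route as the paper's proof: the identical tower-property-plus-Markov per-agent bound $\P{\action_i = 0,\,\fbelief^i_0 \text{ extreme}} \leq 2\eps\,\P{\fbelief^i_0 \text{ extreme}}$, the same use of the conditionally i.i.d.\ private beliefs together with the vanishing-disagreement hypothesis to count agents whose private belief points strongly toward $S$ yet whose action disagrees with it, and the same closing Bayes computation yielding the bound $\eps/(1-\eps)$. The only differences are cosmetic --- you pass to the limit via the strong law of large numbers and Fatou's lemma where the paper uses its conditional Chebyshev inequality (Lemma~\ref{lemma:conditional-chebyshev}), and you should run step (i) with the event $\{\fbelief_i \leq \half\} \supseteq \{\action_i \neq 1\}$ rather than $\{\action_i = 0\}$ so that the indifference value $\action = \{0,1\}$ is also excluded.
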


Recall that $\belief^i_0$ denoted the probability of $S=1$ given agent
$i$'s private signal:
\begin{align*}
  \belief^i_0 = \CondP{S=1}{\psignal_i}.
\end{align*}
The condition of unbounded beliefs can be equivalently formulated to
be that for any $\eps>0$ it holds that $\P{\belief^i_0 < \eps} > 0$ and
$\P{\belief^i_0 > 1- \eps} > 0$.

We shall need two standard lemmas to prove this theorem.
\begin{lemma}
  \label{lemma:bu-eps}
  $\CondP{S=0}{\belief^i_0 < \eps} > 1-\eps$.
\end{lemma}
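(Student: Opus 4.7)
The plan is to reduce this to the tower property together with the definition $\belief^i_0 = \CondP{S=1}{\psignal_i}$. Since $\belief^i_0$ is a measurable function of $\psignal_i$, the event $\{\belief^i_0 < \eps\}$ is $\sigma(\psignal_i)$-measurable, so conditioning on it is a coarsening of conditioning on $\psignal_i$.

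First I would compute $\CondP{S=1}{\belief^i_0 < \eps}$ via the tower property:
\begin{align*}
  \CondP{S=1}{\belief^i_0 < \eps}
  = \CondE{\CondP{S=1}{\psignal_i}}{\belief^i_0 < \eps}
  = \CondE{\belief^i_0}{\belief^i_0 < \eps}.
\end{align*}
On the conditioning event $\{\belief^i_0 < \eps\}$ the random variable $\belief^i_0$ is pointwise strictly less than $\eps$, so the conditional expectation is strictly less than $\eps$ (the conditional probability is well defined precisely because the unbounded beliefs assumption gives $\P{\belief^i_0 < \eps} > 0$, and on a positive-measure event a random variable that is everywhere $<\eps$ has expectation $<\eps$). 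Taking complements yields
\begin{align*}
  \CondP{S=0}{\belief^i_0 < \eps} = 1 - \CondE{\belief^i_0}{\belief^i_0 < \eps} > 1 - \eps,
\end{align*}
which is the claim.

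There is no real obstacle here: the only subtlety is making sure one remembers that $\belief^i_0$ is itself $\sigma(\psignal_i)$-measurable so that the tower step is legitimate, and that the unbounded-beliefs hypothesis (or at least $\P{\belief^i_0<\eps}>0$) is being used implicitly to make the conditional probability meaningful and the inequality strict rather than merely weak.
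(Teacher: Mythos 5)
Your proof is correct and is essentially the paper's argument: the paper likewise uses the tower property to show $\CondP{S=1}{\belief^i_0}=\belief^i_0$ (conditioning on the value of $\belief^i_0$ rather than directly on the event $\{\belief^i_0<\eps\}$, an immaterial difference) and then takes complements and bounds by $\eps$. Your added remarks about $\sigma(\psignal_i)$-measurability and the need for $\P{\belief^i_0<\eps}>0$ to get strictness are accurate and slightly more careful than the paper's own write-up.
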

\begin{proof}
  Since $\belief^i_0$ is a function of $\psignal_i$ then
  \begin{align*}
   \CondP{S=1}{\belief^i_0=b_i}=\CondE{\CondP{S=1}{\psignal_i}}{\belief^i_0(\psignal_i)=b_i}
   = \CondE{\belief^i_0}{\belief^i_0=b_i}= b_i,
  \end{align*}
 and so $\CondP{S=1}{\belief^i_0} = \belief^i_0$. It follows that $\CondP{S=0}{\belief^i_0} =
 1-\belief^i_0$, and so $\CondP{S=0}{\belief^i_0 < \eps} > 1-\eps$.
\end{proof}

Lemma~\ref{lemma:conditional-chebyshev} below is a version of Chebyshev's
inequality, quantifying the idea that the expectation of a random
variable $Z$, conditioned on some event $A$, cannot be much lower than
its unconditional expectation when $A$ has high probability.

{\bf Exercise.} Prove the following lemma.
\begin{lemma}
  \label{lemma:conditional-chebyshev}
  Let $Z$ be a real random variable with finite variance,
  and let $A$ be an event. Then
  \begin{align*}
    \E{Z}-\sqrt{\frac{\Var{Z}}{\P{A}}} \leq \CondE{Z}{A} \leq
    \E{Z}+\sqrt{\frac{\Var{Z}}{\P{A}}}
  \end{align*}
\end{lemma}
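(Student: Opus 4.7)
The plan is to reduce the claim to a direct application of the Cauchy--Schwarz inequality. First I would recenter the random variable by letting $Y = Z - \E{Z}$, so that $\E{Y} = 0$ and $\Var{Y} = \Var{Z}$. The point of recentering is that the quantity we want to bound, namely $\CondE{Z}{A} - \E{Z}$, equals $\CondE{Y}{A}$, and this in turn can be rewritten as $\E{Y \ind{A}}/\P{A}$ by the definition of conditional expectation with respect to an event.

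Next I would apply Cauchy--Schwarz to the numerator:
\begin{align*}
  \left|\E{Y\ind{A}}\right| \leq \sqrt{\E{Y^2}\cdot\E{\ind{A}^2}} = \sqrt{\Var{Z}\cdot\P{A}}.
\end{align*}
Dividing through by $\P{A}$ yields
\begin{align*}
  \left|\CondE{Z}{A} - \E{Z}\right| = \frac{|\E{Y\ind{A}}|}{\P{A}} \leq \sqrt{\frac{\Var{Z}}{\P{A}}},
\end{align*}
which is exactly the two-sided bound in the statement.

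There is no real obstacle here: the only moving part is the algebraic identity $\CondE{Z}{A} - \E{Z} = \E{(Z-\E{Z})\ind{A}}/\P{A}$, which follows immediately from the total expectation decomposition $\E{Z} = \P{A}\CondE{Z}{A} + \P{A^c}\CondE{Z}{A^c}$ (equivalently, from $\E{Y} = 0$ written as $\E{Y\ind{A}} = -\E{Y\ind{A^c}}$). The case $\P{A} = 0$ is vacuous since the conditional expectation is undefined, and the finite-variance hypothesis is exactly what is needed for Cauchy--Schwarz to apply.
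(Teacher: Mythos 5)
Your proof is correct and is essentially identical to the paper's: both apply Cauchy--Schwarz to $\E{(Z-\E{Z})\ind{A}}$ to get the bound $\sqrt{\Var{Z}\P{A}}$ and then divide by $\P{A}$. The only cosmetic difference is that you name the recentered variable $Y$ explicitly.
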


We are now ready to prove Theorem~\ref{thm:common-actions}.
\begin{proof}[Proof of Theorem~\ref{thm:common-actions}]
  Consider a set of agents $\N$ who agree (except for a vanishing
  fraction) on the action.  Assume by contradiction that $q =
  \CondP{\action \neq 0}{S=0} > 0$.

  Recall that $\fbelief_i = \CondP{S=1}{\cF_i}$. Since
  $\CondP{S=1}{\belief^i_0}=\belief^i_0$, 
  \begin{align*}
   \CondE{\fbelief_i}{\belief^i_0}=\CondE{\CondP{S=1}{\cF_i}}{\belief^i_0}=\CondP{S=1}{\belief^i_0}=\belief^i_0.
  \end{align*}

  Applying Markov's inequality to $\fbelief_i$ we have that $\CondP{\fbelief_i
    \geq \half}{\belief^i_0 < \eps} < 2\eps$, and in particular
  \begin{align*}
    \CondP{\action_i \neq 0,S=0}{\belief^i_0 < \eps} =
    \CondP{\fbelief_i \geq \half,S=0}{\belief^i_0 < \eps} < 2\eps
  \end{align*}
  so
  \begin{align*}
    \P{\action_i \neq 0,S=0,\belief^i_0 < \eps} \leq 2 \eps \P{\belief^i_0 < \eps}   
  \end{align*}

  Denote
  \begin{align}
    \label{K-def}
    K(n) = \frac{1}{n}\sum_{i \in [n]}\ind{\belief^i_0 < \eps} = \frac{1}{n}\sum_{i \in [n]}\ind{\belief^i_0 < \eps, \action_i = 0} + 
    \frac{1}{n}\sum_{i \in [n]}\ind{\belief^i_0 < \eps, \action_i \neq 0}
  \end{align}
  
  Let $K_1(n)$ denote the first sum and $K_2(n)$ denote the second
  sum. From our assumption that a vanishing fraction of agents
  disagree it follows that a.s.
  \begin{align*}
    \lefteqn{\limsup \CondE{K_1(n)}{\action \neq 0,S = 0}}\\
    &\leq \frac{1}{q} \limsup \CondE{K_1(n)}{\action \neq 0}\\
    &\leq 
    \frac{1}{q} \limsup \CondE{\frac{1}{n}\sum_{i \in [n]}\ind{\action_i = 0}}{\action \neq 0} = 0.   
  \end{align*}

  It also follows that for all $n$
  \begin{align*}
\CondE{K_2(n)}{\action \neq 0,S = 0} \leq \frac{1}{q} \E{K_2(n), \action \neq 0, S = 0} \leq \frac{2 \eps \P{\belief^i_0 < \eps}}{q}.    
  \end{align*}
  Thus
  \begin{align*}
    \limsup_n \CondE{K(n)}{\action \neq 0,S = 0} \leq \frac{2 \eps
      \P{\belief^i_0 < \eps}}{q}.
  \end{align*}
   
  We hence bound $\CondE{K}{\action \neq 0,S = 0}$ from above. We will
  now bound it from below to obtain a contradiction.

  Applying lemma~\ref{lemma:conditional-chebyshev} to $K$ and the
  event $\{\action \neq 0\}$ (under the conditional measure $S=0$)
  yields that
  \begin{align*}
    \CondE{K(n)}{\action \neq 0,S=0} &\geq \CondE{K(n)}{S=0} -
    \sqrt{\frac{\Var{K(n) | S = 0}}{q}}.
  \end{align*}
  Since the agents' private signals (and hence their private beliefs)
  are independent conditioned on $S=0$, $K$ (conditioned on $S$) is
  the average of $n$ i.i.d.\ variables. Hence $\Var{K(n) | S = 0} =
  n^{-1}\Var{\ind{\belief^i_0 < \eps} | S = 0}$ and $\CondE{K(n)}{S=0} =
  \CondP{\belief^i_0 < \eps}{S = 0}$. Thus we have that
  \begin{align}
    \label{eq:k_n-upper}
    \CondE{K(n)}{\action \neq 0,S=0} &\geq \CondP{\belief^i_0 < \eps}{S = 0} -
    n^{-1/2}\sqrt{\frac{\Var{\ind{\belief^i_0 < \eps} | S = 0}}{q}}.
  \end{align}
  and so
  \[
  \liminf_n \CondE{K(n)}{\action_i \neq 0,S=0} \geq \CondP{\belief^i_0 < \eps}{S = 0}
  \]
  Joining the lower bound with the upper bound we obtain that
  \[
  \CondP{\belief^i_0 < \eps}{S = 0} \leq \frac{2 \eps \P{\belief^i_0 < \eps}}{q},
  \] 
  and applying Bayes rule we obtain 
 \begin{align*}
    q < \frac{\eps}{\CondP{S=0}{\belief^i_0 < \eps}}.
  \end{align*}

  Since by Lemma~\ref{lemma:bu-eps} above we know that
  $\CondP{S=0}{\belief^i_0 < \eps} > 1-\eps$, then
  \begin{align*}
    q < \frac{\eps}{1-\eps}.
  \end{align*}
  Since this holds for all $\eps$, we have shown that $q=0$, which is
  a contradiction.
\end{proof}

\section{Sequential Models}
In this section we consider a classical class of learning models
called {\bf sequential models}. We retain a binary state of the world
$S$ and conditionally i.i.d.\ private signals, but relax two
assumption.
\begin{itemize}
\item We no longer assume that the graph $G$ is strongly connected. In
  fact, we consider the particular case that the set of agents is
  countably infinite, identify it with $\N$, and let $(i,j) \in E$ iff
  $j < i$. That is, the agents are ordered, and each agent observes
  the actions of its predecessors.
\item We assume that each agent acts once, after observing the actions
  of its predecessors. That is, agent $i$ acts only once, at time
  $i$. 
\end{itemize}
In this section, we denote agent $i$'s (single) action by $\action_i$.
Hence agent $i$'s information when taking its action, which we denote
by $\info_i$, is
\begin{align*}
  \info_i = \{\psignal_i,\action_j\,:\,j < i\}.
\end{align*}
We likewise denote agent $i$'s belief at time $i$ by $\belief_i =
\CondP{S=1}{\info_i}$.  We assume discrete utilities, so that
\begin{align*}
  \action_i = \argmax_{s \in \{0,1\}}\CondP{S=s}{\info_i},
\end{align*}
and let $\action_i=1$ when $\CondP{S=1}{\info_i}=1/2$.

Since each agent acts only once, we explore a different notion of
learning in this section. The question we consider is the following:
when is it the case that $\lim_{i \to \infty}\action_i = S$ with
probability one?  Since the graph is fixed, the answer to this
question depends only on the private signal distributions $\mu_0$ and
$\mu_1$.

This model (in a slightly different form) was introduced independently
by Bikhchandani, Hirshleifer and Welch~\cite{BichHirshWelch:92}, and
Banerjee~\cite{Banerjee:92}. A significant later contribution is that
of Smith and S{\o}rensen~\cite{smith2000pathological}.

An interesting phenomenon that arises in this model is that of an {\em
  information cascade}. An information cascade is said to occur if,
given an agent $i$'s predecessor's actions, $i$'s action does not
depend on its private signal. This happens if the previous agents'
actions present such compelling evidence towards the event that (say)
$S=1$, that any realization of the private signal would not change
this conclusion.  Once this occurs - that is, once one agent's action
does not depend on its private signal - then this will also hold for
all the agents who act later.

\subsection{The external observer at infinity}
An important tool in the analysis of this model is the introduction of
an external observer $x$ that observes all the agents' actions but
none of their private signals. We denote by $\info^x_i =
\{\action_j\,:\,j < i\}$ the information available to $x$ at time $i$,
and denote by
\begin{align*}
 \belief^x_i = \CondP{S=1}{\info^x_i} 
\end{align*}
and
\begin{align*}
 \belief^x_\infty = \lim_i\belief^x_i = \CondP{S=1}{\info^x_\infty} 
\end{align*}
the beliefs of $x$ at times $t$ and infinity respectively, where, as
before, $\info^x_\infty = \cup_i\info^x_i$.
The same martingale argument used above can also be used here to show
that the limit $\belief^x_\infty$ indeed exists and satisfies the
equality above.

{\bf Exercise.} Show that the likelihood ratio
\begin{align*}
  L^x_i = \frac{1-\belief^x_i}{\belief^x_i}
\end{align*}
is also a martingale, {\em conditioned on $S=1$}.

The martingale $\{\belief^x_i\}$ converges almost surely to
$\belief^x_\infty$ in $[0,1]$, and conditioned on $S=1$,
$\belief^x_\infty$ has support $\subseteq (0,1]$. The reason that
$\belief^x_\infty \neq 0$ when conditioning on $S=1$, is the fact that
$\CondP{S=1}{\belief^x_\infty}=\belief^x_\infty$, and so
$\CondP{S=1}{\belief^x_\infty=0}=0$.

We also define actions for $x$, given by
\begin{align*}
  \action^x_i = \argmax_{s \in \{0,1\}}\CondP{S=s}{\info^x_i} = \round{\belief^x_i}.
\end{align*}
We again assume that in cases of indifference, the action $1$ is
chosen.
\begin{claim}
  \label{clm:x-copies}
  $\action^x_{i+1} = \action_i$ 
\end{claim}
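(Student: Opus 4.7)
The plan is to relate the observer $x$'s posterior at time $i+1$ to agent $i$'s belief $\belief_i$ via the tower property, and then invoke the monotonicity of the argmax/rounding map to conclude that the two actions coincide.

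First I would record the sigma-algebra inclusion $\sigma(\info^x_{i+1}) = \sigma(\action_1,\ldots,\action_i) \subseteq \sigma(\info_i)$: each $\action_j$ with $j<i$ sits inside $\info_i$ by definition, and $\action_i$ is $\sigma(\info_i)$-measurable because that is exactly how agent $i$ computes it. Applying the tower property to $\belief_i = \CondP{S=1}{\info_i}$ then gives
\begin{align*}
\CondP{S=1}{\info^x_{i+1}} = \CondE{\CondP{S=1}{\info_i}}{\info^x_{i+1}} = \CondE{\belief_i}{\info^x_{i+1}}.
\end{align*}

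Second, I would translate this identity on beliefs into one on actions. The stated tie-breaking convention means $\action_i = 1$ iff $\belief_i \geq \tfrac12$, and $\action_i = 0$ iff $\belief_i < \tfrac12$. Since $\action_i$ is $\info^x_{i+1}$-measurable, on the event $\{\action_i = 1\}$ we have $\belief_i \geq \tfrac12$ a.s., so $\CondE{\belief_i}{\info^x_{i+1}} \geq \tfrac12$ there; symmetrically, on $\{\action_i = 0\}$ the strict inequality $\belief_i < \tfrac12$ forces $\CondE{\belief_i}{\info^x_{i+1}} < \tfrac12$ a.s.\ on that event. Since $\action^x_{i+1}$ is obtained by applying the same tie-breaking rule to $\CondE{\belief_i}{\info^x_{i+1}} = \CondP{S=1}{\info^x_{i+1}}$, we conclude $\action^x_{i+1} = \action_i$.

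The argument is short and I do not anticipate a real obstacle; the only point to watch is the boundary case $\belief_i = \tfrac12$, which is handled cleanly because ties are broken to $1$ on both sides. Conceptually, the claim is simply saying that publicly observing an agent's optimal binary action is, for the purpose of the next observer's decision threshold at $\tfrac12$, exactly as useful as knowing whether that agent's private posterior lay above or below $\tfrac12$.
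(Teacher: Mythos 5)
Your proof is correct and takes essentially the same route as the paper's: the paper disposes of the claim in one line by observing that $\action_i$ is $\sigma(\info_i)$-measurable and $\info^x_{i+1} \subseteq \info_i$, and your argument is exactly the careful expansion of that remark (the tower property $\CondP{S=1}{\info^x_{i+1}} = \CondE{\belief_i}{\info^x_{i+1}}$ plus the threshold check on the events $\{\action_i=1\}$ and $\{\action_i=0\}$, with the tie at $\belief_i=\tfrac12$ handled consistently on both sides). Nothing to fix.
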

That is, the external observer simply copies, at time $t+1$, the
action of agent $t$. This follows immediately from the fact that
$\action_i$ is $\sigma(\info_i)$-measurable, and so $\info^x_{i+1}
\subseteq \info_i$. It follows that $\lim_i\action_i =
\lim_i\action^x_i$, and so we have learning - in the sense we defined
above for this section by $\lim_i\action_i=S$ - iff the external
observer learns in the usual sense of $\lim_i\action^x_i = S$.

\subsection{The agents' calculation}
We write out each agent's calculation of its belief $\belief_i$,
from which follows its action $\action_i$. This is more easily done by
calculating the likelihood ratio
\begin{align*}
  L_i = \frac{1-\belief_i}{\belief_i}.
\end{align*}
By Bayes' law, since $\P{S=1}=\P{S=0}=\half$, and since $\info_i =
(\info^x_i,\psignal_i)$
\begin{align*}
  L_i = \frac{\CondP{S=0}{\info_i}}{\CondP{S=1}{\info_i}}=
  \frac{\CondP{\info_i}{S=0}}{\CondP{\info_i}{S=1}}=
  \frac{\CondP{\info^x_i,\psignal_i}{S=0}}{\CondP{\info^x_i,\psignal_i}{S=1}}.
\end{align*}
Since the private signals are conditionally i.i.d., $\psignal_i$ is
conditionally independent of $\info^x_i$, and so 
\begin{align*}
  L_i &= \frac{\CondP{\info^x_i}{S=0}}{\CondP{\info^x_i}{S=1}}\cdot\frac{\CondP{\psignal_i}{S=0}}{\CondP{\psignal_i}{S=1}}.
\end{align*}
We denote by $P_i$ the {\em private likelihood ratio}
$\CondP{\psignal_i}{S=0} / \CondP{\psignal_i}{S=1}$, so that 
\begin{align}
  \label{eq:L-LxP}
  L_i = L^x_i \cdot P_i.
\end{align}

\subsection{The Markov chain and the martingale}
Another useful observation is that $\{\belief^x_i\}_{i \in \N}$ is not
only a martingale, but also a Markov chain. We denote this Markov
chain on $[0,1]$ by $\cM$. To see this, note that conditioned on $S$,
the private likelihood ratio $P_i$ is independent of $\belief^x_j$,
$j<i$, and so its distribution conditioned on $\belief^x_i =
\CondP{S=1}{\info^x_i}$ is the same as its distribution conditioned on
$(\belief^x_0,\ldots,\belief^x_i)$, which are
$\sigma(\info^x_i)$-measurable.

\subsection{Information cascades, convergence and learning}
An information cascade is the event that, for some $i$, conditioned on
$\info^x_i$, $\action_i$ is independent of $\psignal_i$. That is, an
information cascade is the event that the observer at infinity knows,
at time $i$, which action agent $i$ is going to take, even though it
only knows the actions of $i$'s predecessors and does not know $i$'s
private signal. Equivalently, an information cascade occurs when
$\action_i$ is $\sigma(\info^x_i)$-measurable. It is easy to see that
it follows that $\action_j$ will also be
$\sigma(\info^x_i)$-measurable, for all $j\geq i$.

\begin{claim}
  \label{clm:fixed-point}
  An information cascade is the event that $\belief^x_i$ is a fixed
  point of $\cM$.
\end{claim}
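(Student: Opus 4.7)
The plan is to prove both directions of the equivalence, using the Bayes-rule decomposition $L_i = L^x_i \cdot P_i$ from \eqref{eq:L-LxP} to translate the fixed-point condition into a statement about the private likelihood ratio.

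The easy direction is that a cascade implies a fixed point. By definition of cascade, $\action_i$ is $\sigma(\info^x_i)$-measurable, so $\sigma(\info^x_{i+1}) = \sigma(\info^x_i, \action_i) = \sigma(\info^x_i)$ and hence
\[
\belief^x_{i+1} = \CondP{S=1}{\info^x_{i+1}} = \belief^x_i
\]
almost surely; this is exactly the statement that $\belief^x_i$ is a fixed point of $\cM$.

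Conversely, suppose $\belief^x_i$ is a fixed point, i.e., $\belief^x_{i+1} = \belief^x_i$ a.s. By Bayes' rule, this is equivalent to $\action_i$ being conditionally independent of $S$ given $\info^x_i$, namely $\CondP{\action_i = 1}{S = 0,\info^x_i} = \CondP{\action_i = 1}{S = 1,\info^x_i}$. Combining the optimal decision rule with \eqref{eq:L-LxP} gives $\action_i = 1 \Leftrightarrow P_i \leq c$, where $c := 1/L^x_i$ is $\sigma(\info^x_i)$-measurable. Since $\psignal_i \sim \mu_s$ conditional on $S=s$ and is independent of $\info^x_i$, the conditional-independence equality reduces to $\mu_0(A) = \mu_1(A)$, where $A := \{w \in \Omega : p(w) \leq c\}$ and $p := d\mu_0/d\mu_1$ (with $c$ now treated as a fixed constant).

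The core calculation then uses
\[
\mu_0(A) - \mu_1(A) = \int_A (p - 1)\, d\mu_1
\]
to force $A$ to be trivial. When $c < 1$, we have $p - 1 \leq c - 1 < 0$ on $A$, so the integral vanishes only if $\mu_1(A) = 0$, giving the cascade $\action_i \equiv 0$. When $c > 1$, I rewrite the integral using $\int_\Omega (p - 1)\, d\mu_1 = 0$ as $-\int_{A^c} (p - 1)\, d\mu_1$, where the integrand satisfies $p - 1 > c - 1 > 0$ on $A^c$, forcing $\mu_1(A^c) = 0$ and the cascade $\action_i \equiv 1$. The main subtlety is the boundary case $c = 1$, where the integrand is only non-positive on $A$; here one invokes the standing assumption $\mu_0 \neq \mu_1$ to conclude $\mu_1(\{p < 1\}) > 0$, so the integral is still strictly negative on $A = \{p \leq 1\}$, confirming that $c = 1$ cannot correspond to a fixed point for nontrivial signals.
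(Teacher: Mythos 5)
Your proof is correct, but the converse direction takes a genuinely different route from the paper's. The forward direction (cascade implies fixed point) is the same in both. For the converse, the paper gives a two-line argument that leans on Claim~\ref{clm:x-copies}: if $\belief^x_{i+1}=\belief^x_i$ a.s.\ then $\action^x_{i+1}=\action^x_i$, and since $\action_i=\action^x_{i+1}=\action^x_i$, the action $\action_i$ coincides a.s.\ with a $\sigma(\info^x_i)$-measurable variable, which is the cascade condition. You instead translate the fixed-point condition into conditional independence of $\action_i$ and $S$ given $\info^x_i$, reduce it via Eq.~\ref{eq:L-LxP} to the identity $\mu_0(A)=\mu_1(A)$ for $A=\{p\le c\}$ with $p=d\mu_0/d\mu_1$ and $c=1/L^x_i$, and then force $A$ to be $\mu_1$-null or co-null by a sign argument on $\int_A(p-1)\,d\mu_1$. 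Your route is longer and needs the standing assumption $\mu_0\neq\mu_1$ to dispose of the boundary case $c=1$, but it is self-contained (it does not invoke the imitation claim, whose proof the paper only sketches) and it yields strictly more: it characterizes the fixed points of $\cM$ as exactly those states for which $1/L^x_i$ lies outside the support of the private likelihood ratio $P_i$ --- essentially the computation the paper redoes later when showing that bounded private signals produce cascades --- and it shows as a byproduct that $\belief^x_i=1/2$ is never a fixed point when signals are informative. Both arguments are sound; one small point worth making explicit in yours is that in the case $c<1$ the conclusion $\mu_1(A)=0$ also gives $\mu_0(A)=0$ (by mutual absolute continuity), so the action is a.s.\ constant under both conditional measures, which is what the cascade requires.
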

By ``fixed point of $\cM$'' we mean that a.s.\
$\belief^x_{i+1}=\belief^x_{i}$.
\begin{proof}[Proof of Claim~\ref{clm:fixed-point}]
  If $\action_i$ is $\sigma(\info^x_i)$ measurable then
  $\sigma(\info^x_i)=\sigma(\info^x_i,\action_i)=\sigma(\info^x_{i+1})$. It
  follows that
  \begin{align*}
    \belief^x_i = \CondP{S=1}{\info^x_i} = \CondP{S=1}{\info^x_{i+1}}
    = \belief^x_{i+1}.
  \end{align*}

  Conversely, if $\belief^x_i=\belief^x_{i+1}$ w.p.\ one, then
  $\action^x_i=\action^x_{i+1}$ with probability one, and it follows
  that $\action_i=\action^x_{i+1}$ is $\sigma(\info^x_i)$-measurable.
\end{proof}

\begin{theorem}
  The limit $\lim_i\action_i$ exists almost surely.
\end{theorem}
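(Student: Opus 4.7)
The plan is to reduce convergence of the sequence $\{\action_i\}$ to convergence of the external observer's actions $\{\action^x_i\}$. By Claim~\ref{clm:x-copies}, $\action_i = \action^x_{i+1} = \round{\belief^x_{i+1}}$, so it suffices to show that $\round{\belief^x_i}$ converges almost surely. As noted earlier in this section, $\{\belief^x_i\}$ is a bounded martingale with respect to the filtration $\{\sigma(\info^x_i)\}_{i \in \N}$, and hence by the martingale convergence theorem there is a $[0,1]$-valued random variable $\belief^x_\infty$ such that $\belief^x_i \to \belief^x_\infty$ almost surely.

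Away from the boundary $\{\belief^x_\infty = 1/2\}$, the conclusion is immediate: on $\{\belief^x_\infty > 1/2\}$ there exists a (random) $N$ beyond which $\belief^x_i > 1/2$, so $\action^x_i = 1$ eventually; symmetrically, $\action^x_i = 0$ eventually on $\{\belief^x_\infty < 1/2\}$. The main obstacle is the boundary event $E = \{\belief^x_\infty = 1/2\}$, on which the sequence $\belief^x_i$ could in principle oscillate across $1/2$ and cause $\round{\belief^x_i}$ to flip infinitely often.

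I would resolve this by showing $\P{E} = 0$. On $E$ the martingale increments $\belief^x_{i+1} - \belief^x_i$ must tend to $0$. Using the update rule $L_i = L^x_i \cdot P_i$ together with $L^x_i \to 1$ on $E$, the action $\action_i$ is asymptotically determined by the sign of $\log P_i$, i.e., by whether the private signal favors $S=0$ or $S=1$. Since $\mu_0$ and $\mu_1$ are distinct mutually absolutely continuous measures (the degenerate case $\mu_0 = \mu_1$ is trivial under the tie-breaking convention), the event $\{P_i \leq 1\}$ carries different probabilities under $S=0$ and $S=1$, so the Bayesian update $\belief^x_{i+1} = \CondP{S=1}{\info^x_i, \action_i}$ moves the posterior away from $1/2$ by an amount bounded below by a positive constant $c(\mu_0, \mu_1)$ whenever $\belief^x_i$ is close to $1/2$. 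This contradicts $\belief^x_{i+1} - \belief^x_i \to 0$, forcing $\P{E} = 0$, and combining with the two non-boundary cases yields the claim.
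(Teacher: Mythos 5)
Your proposal is correct and shares its skeleton with the paper's proof: both pass to the external observer via Claim~\ref{clm:x-copies}, use that $\belief^x_i$ is a bounded martingale, and locate the only danger at the boundary value $1/2$. Where you diverge is in how the boundary is handled. The paper only rules out the oscillation event: assuming $\action^x_i$ takes both values infinitely often, it deduces $\belief^x_\infty = 1/2$ there, passes to the limit in $\belief^x_{i+1} = f(\belief^x_i,\action_i)$ along both action-subsequences to get $f(1/2,1)=f(1/2,0)=1/2$, and then propagates this back to time $0$ (where $\belief^x_0 = 1/2$) to conclude the chain is frozen at $1/2$ and all actions equal $1$ by the tie-breaking rule --- a contradiction. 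You instead prove the stronger statement $\P{\belief^x_\infty = 1/2}=0$, using the strict separation $f(1/2,1) > 1/2 > f(1/2,0)$; this is where $\mu_0 \neq \mu_1$ genuinely enters your argument, whereas the paper's version needs only continuity of $f$ and the tie-break convention. Your route buys a cleaner trichotomy and a quantitative drift statement; the paper's makes a weaker demand on the update map. The one step you should not leave implicit is the claim that the update moves the posterior away from $1/2$ by a \emph{uniform} constant for all $\belief^x_i$ in a neighborhood of $1/2$, not just at $1/2$ exactly: this requires continuity of $b \mapsto f(b,a)$ at $b=1/2$, which is slightly delicate because $\belief_i$ may have an atom at $1/2$. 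The paper handles this via the identity $\belief_i = |\belief_i - \half|(2\action_i-1)+\half$, and you would need the same (or an equivalent) device before the contradiction with $\belief^x_{i+1}-\belief^x_i \to 0$ goes through; likewise your heuristic that $\action_i$ is ``asymptotically determined by the sign of $\log P_i$'' should be replaced by the exact computation of $f(1/2,a)$, since $P_i$ may carry mass arbitrarily close to $1$.
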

\begin{proof}
  As noted above, $\action_i = \action^x_{i+1}$.  Assume by
  contradiction that $\action^x_{i+1}$ takes both values infinitely
  often. Since $\action^x_i = \ind{\belief^x_i \geq \half}$, and since
  $\belief^x_i$ converges to $\belief^x_\infty$, it follows that
  $\belief^x_\infty = \half$.

  Note that by the Markov chain nature of $\{\belief^x_i\}$,
  \begin{align}
    \label{eq:B-markov-chain}
    \belief^x_{i+1} = f(\belief^x_i,\action_i)
  \end{align}
  for $f:[0,1]\times\{0,1\} \to [0,1]$ independent of $i$ and given by
  \begin{align*}
    f(b,a) = \CondE{\belief_i}{\belief^x_i=b,\action_i=a}.
  \end{align*}
  Since $\action_i = \ind{\belief_i \geq \half}$, it follows that
  $\belief_i = |\belief_i-\half|(2\action_i-1)+\half$, and so
  \begin{align*}
    f(b,a) =
    \CondE{\left|\belief_i-\half\right|}{\belief^x_i=b,\action_i=a}(2a-1)+1/2.
  \end{align*}
  Hence $f$ is continuous at $(1/2,1)$ and $(1/2,0)$, even if
  $\belief_i=\half$ with positive probability. It follows by taking
  the limit of \eqref{eq:B-markov-chain} that if
  $\lim_i\belief^x_i=1/2$ then $f(1/2,1) = f(1/2,0)$. But then
  $\belief^x_i$ would equal $f(1/2,\cdot)$ for all $i$, since
  $\belief^x_0=1/2$, and $\action^x_i=1$ for all $i$, which is a
  contradiction.
\end{proof}

Since $\lim_i\action_i$ exists almost surely we can define
\begin{align*}
  \action = \lim_i\action_i.
\end{align*}
Since $\action_i \neq \action$ for only a finite number of agents, we
can directly apply Theorem~\ref{thm:common-actions} to arrive at the
following result.
\begin{theorem}
  When private signals are unbounded then $\action=S$ w.p.\ one.
\end{theorem}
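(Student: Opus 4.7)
The plan is to derive the theorem as a direct corollary of Theorem~\ref{thm:common-actions}, using the convergence result $\action = \lim_i \action_i$ established in the preceding theorem. The key observation is that pointwise convergence of a $\{0,1\}$-valued sequence means that all but finitely many terms equal the limit, and a finite set has density zero, so the hypothesis of Theorem~\ref{thm:common-actions} (that all but a vanishing fraction of agents agree on their action) is automatically satisfied.

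More explicitly, first I would verify that the sequential model fits the framework of Theorem~\ref{thm:common-actions}. The set of agents is identified with $\N$, the private signals $(\psignal_1, \psignal_2, \ldots)$ are i.i.d.\ conditioned on $S$ by our standing assumption, and each agent's information set $\cF_i = \sigma(\info_i) = \sigma(\psignal_i, \action_1, \ldots, \action_{i-1})$ contains $\psignal_i$ and is a sub-sigma-algebra of $\sigma(\psignal_1, \ldots, \psignal_n)$ (by induction on $i$, since each $\action_j$ is $\sigma(\psignal_1,\ldots,\psignal_j)$-measurable). The actions $\action_i = \argmax_{s \in \{0,1\}} \CondP{S=s}{\info_i}$ are the MAP estimators, matching the definition in Theorem~\ref{thm:common-actions}. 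Unboundedness of private signals is exactly our hypothesis.

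Next I would verify the agreement hypothesis. By the preceding theorem the limit $\action := \lim_i \action_i$ exists almost surely. Since $\action_i, \action \in \{0,1\}$, on the event of convergence the set $\{i \in \N : \action_i \neq \action\}$ is finite, so
\begin{align*}
  \limsup_n \frac{1}{n}\bigl|\{i \in [n] : \action_i \neq \action\}\bigr| = 0 \quad \text{almost surely}.
\end{align*}
This is precisely the vanishing-fraction-disagreement hypothesis of Theorem~\ref{thm:common-actions}. Applying that theorem gives $\P{\action = S} = 1$, which is the claim.

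There is no real obstacle here; the theorem was deliberately set up so that Theorem~\ref{thm:common-actions} is the right black box. The only thing one should double-check when writing this up is the tie-breaking convention: in the current section $\action_i = 1$ when beliefs are exactly $1/2$, whereas Theorem~\ref{thm:common-actions} took $\action_i = \{0,1\}$ in the indifference case. However, indifference at the limit is a measure-zero issue, and the proof of Theorem~\ref{thm:common-actions} only used Markov's inequality applied to $\fbelief_i$ conditional on a low private belief, which goes through under either convention. Hence the invocation is legitimate and the conclusion follows.
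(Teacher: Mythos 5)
Your proposal is correct and is exactly the paper's argument: the paper defines $\action=\lim_i\action_i$ using the preceding convergence theorem, notes that $\action_i\neq\action$ for only finitely many agents, and directly invokes Theorem~\ref{thm:common-actions}. Your additional checks (that the sequential model fits the hypotheses of that theorem, and that the tie-breaking convention is harmless) are sensible but do not change the route.
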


When private signals are bounded then information cascades occur with
probability one, and $\action$ is no longer almost surely equal to
$S$.
\begin{theorem}
  When private signals are bounded then $\P{\action=S} < 1$.
\end{theorem}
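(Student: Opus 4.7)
\medskip
\noindent\textbf{Proof plan.}
The strategy is to locate a positive-probability event, measurable in finitely many private signals, on which the external observer's belief enters an upper \emph{cascade} interval from which it cannot escape, so that $\action = 1$ is played by every subsequent agent. Mutual absolute continuity of $\mu_0$ and $\mu_1$ then transports this event to one of positive probability conditional on $S=0$, forcing $\P{\action \neq S} > 0$.

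Boundedness of the private signals supplies $\alpha \in (0,1)$ with $P_i \in [\alpha,1/\alpha]$ almost surely. By Eq.~\ref{eq:L-LxP}, agent $i$'s action is $1$ regardless of $\psignal_i$ precisely when $L^x_i \le \alpha$, i.e., $\belief^x_i \ge b_H := 1/(1+\alpha) > \half$; symmetrically, it is $0$ for every signal whenever $\belief^x_i \le b_L := \alpha/(1+\alpha) < \half$. Once $\belief^x_i \in [b_H,1]$ (resp.\ $[0,b_L]$), all subsequent actions are uninformative, so $\belief^x_j = \belief^x_i$ and $\action_j = 1$ (resp.\ $0$) for all $j \ge i$. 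I would then sharpen the continuity argument used to establish existence of $\lim_i \action_i$ to obtain $\belief^x_\infty \notin (b_L,b_H)$ almost surely: for $b \in (b_L,b_H)$, a direct Bayes computation on the threshold set $\{P_i \le 1/L^x_i\}$ yields the strict inequalities $f(b,0) < b < f(b,1)$, while both actions occur with probability bounded away from $0$ uniformly in a neighborhood of $b$. Convergence of $\belief^x_i$ to some $b^* \in (b_L,b_H)$ would then force $f(b^*,0) = b^* = f(b^*,1)$ by continuity of $f$, a contradiction.

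Having established $\belief^x_\infty \in [0,b_L] \cup [b_H,1]$ a.s., the bounded martingale identity $\E{\belief^x_\infty} = \belief^x_0 = \half$ together with $b_L < \half$ rules out $\belief^x_\infty \le b_L$ almost surely, so $\P{\belief^x_\infty \ge b_H} > 0$. Writing $\{\belief^x_\infty \ge b_H\} = \bigcup_k \{\belief^x_k \ge b_H\}$, countable additivity yields a finite $k$ with $\P{\belief^x_k \ge b_H} > 0$. This event lies in $\sigma(\psignal_1,\ldots,\psignal_{k-1})$, and since $\mu_0^{k-1}$ and $\mu_1^{k-1}$ are mutually absolutely continuous, its positive unconditional probability upgrades to positive probability under $S=0$. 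On the joint event $\{\belief^x_k \ge b_H\} \cap \{S=0\}$ the upper cascade is triggered, so $\action = 1 \neq 0 = S$, giving $\P{\action \neq S} > 0$.

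The main obstacle is the second step: showing $\belief^x_\infty$ actually lies in a cascade interval, not merely on one side of $\half$. The previous theorem's proof handles the analogous issue at the single point $\half$; here one needs the strict separation $f(b,0) < b < f(b,1)$ uniformly on compact subintervals of $(b_L,b_H)$, which reduces to a Neyman--Pearson-style check that $\mu_1(\{P \le t\}) > \mu_0(\{P \le t\})$ strictly for $t$ bounded away from the extremes $\alpha$ and $1/\alpha$.
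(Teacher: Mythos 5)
Your overall architecture is coherent and, if completed, would prove something stronger than the theorem (a Smith--S{\o}rensen--style statement that $\belief^x_\infty$ is supported on the cascade set, plus a change of measure putting positive mass on the wrong cascade under $S=0$). But the step you yourself flag as the main obstacle --- that $\belief^x_\infty \notin (b_L,b_H)$ almost surely --- is a genuine gap, and your sketch does not close it. The pointwise strict inequalities $f(b,0)<b<f(b,1)$ do follow from $\CondP{P_i\le t}{S=1}>\CondP{P_i\le t}{S=0}$ for interior $t$, but pointwise strictness does not prevent the martingale from converging to a point of $(b_L,b_H)$: you need either a uniform lower bound on the one-step conditional variance $p(1-p)\left(f(b,1)-f(b,0)\right)^2$ over compact subintervals, or continuity of $f(\cdot,a)$ at the putative limit. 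Neither is automatic: $f(b,a)$ is a conditional expectation over the threshold event $\{P_i\le b/(1-b)\}$, so if the law of $P_i$ has atoms then $f(\cdot,a)$ is discontinuous at the corresponding values of $b$, and $f(b,0)$ can approach $1/2$ there, so $f(b,1)-f(b,0)$ need not be bounded below. There is also a boundary wrinkle in the identity $\{\belief^x_\infty\ge b_H\}=\bigcup_k\{\belief^x_k\ge b_H\}$: a path converging to $b_H$ from below lies in the left-hand set but in none of the sets on the right, so you actually need $\P{\belief^x_\infty>b_H}>0$. None of this is fatal, but these are exactly the delicate points, and they are left open.

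The paper avoids all of this because the theorem only requires $\P{\belief^x_\infty\in(0,1)}>0$: since $\lim_i\P{\action^x_i=S}=\E{\max\{\belief^x_\infty,1-\belief^x_\infty\}}$, it suffices to rule out the boundary values. Conditioned on $S=1$, if $\belief^x_i\to 1$ then $L^x_i$ must at some finite time enter $(0,1/M)$, where every point is a fixed point of $\cM$ because the action is forced; the likelihood ratio then freezes at a strictly positive value and can never reach $0$ --- contradiction. The cascade intervals are used only as an obstruction to reaching the extremes, with no analysis of where inside $[0,1]$ the belief actually lands. If you want to keep your route, the missing ingredient is precisely the convergence-to-the-cascade-set theorem of Smith and S{\o}rensen, which takes real work; otherwise the short reductio above is the recommended path.
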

\begin{proof}
  When private signals are bounded then the convex closure of the
  support of $P_i$ is equal to $[\eps,M]$ for some $\eps,M>0$. It
  follows then from \eqref{eq:L-LxP} that if $L^x_i \leq 1/M$ then
  a.s.\ $L_i \leq 1$, and so $\action_i=1$. Likewise, if $L^x_i >
  1/\eps$ then a.s.\ $\action_i=0$. Hence $[0,1/M]$ and
  $(1/\eps,\infty)$ are all fixed points of $\cM$.

  Note that $\CondP{\action^x_i=S}{\info^x_i} =
  \max\{\belief^x_i,1-\belief^x_i\}$. Hence we can prove the claim by
  showing that $\belief^x_\infty = \lim_i\belief^x_i$ is in $(0,1)$,
  since then it would follow that $\lim_i\P{\action^x_i=S} < 1$, and
  in particular $\P{\lim_i\action_i=S}< 1$.

  Indeed, condition on $S=1$, and assume by contradiction that
  $\lim_i\belief^x_i = 1$. Then $L^x_i$ will equal some $\delta \in
  (0,1/M)$ for $i$ large enough. But $\delta$ is a fixed point of
  $\cM$, and so $L^x_j$ will equal $\delta$ hence and $\belief^x_i$
  will not converge to one. The same argument applies if we condition
  on $S=0$ and argue that $L^x_i$ will equal some $N \in
  (1/\eps,\infty)$ for $i$ large enough.
\end{proof}

\section{Learning from discrete actions on networks}
This section is adapted from Mossel, Sly and
Tamuz~\cite{mossel2012asymptotic}.

In this section we study asymptotic learning on general (undirected)
social networks. We here choose to dive more deeply into the proofs -
as compared to the previous sections of this survey - in order to
showcase the various techniques needed to tackle this problem. These
techniques include graph limits (Section~\ref{sec:graph-limits}), a
notion of $\delta$-independence (Section~\ref{sec:delta-ind}) and
more. Indeed, the proof of the main result of this section,
Theorem~\ref{thm:bounded-learning}, does not (as far as we know) admit
a short intuitive explanation, but rather requires the introduction
and digestion of some abstract ideas, and in particular the topology
on rooted graphs that we define and analyze in
Section~\ref{sec:graph-limits}.

We study general social networks that are {\bf undirected}, and
consider both the finite and the countably infinite case.  We consider
agents who maximize, at each time $t$, the utility function (see
\eqref{eq:discrete-utility})
\begin{align*}
  \util^i_t = \ind{\action^i_t = S}.  
\end{align*}
Hence they choose actions using \eqref{eq:myopic-action-discrete}:
\begin{align*}
  \action^i_t = \argmax_{s \in \{0,1\}}\CondP{S=s}{\info^i_t}.
\end{align*}

We ask the following questions:
\begin{enumerate}
\item {\bf Agreement}. Do the agents reach agreement? In this model we
  say that $i$ and $j$ agree if $\action^i_\infty =
  \action^j_\infty$.
  This happens under a weak condition on the private signals.
\item {\bf Learning.} When the agents do agree on some limit action
  $\action_\infty$, does this action equal $S$? The answer to this
  question depends on the graph, and that for undirected graphs indeed
  $\action_\infty = S$ with high probability (for large finite graphs)
  or with probability one (for infinite graphs).
\end{enumerate}

The condition on private signals that implies agreement on limit
actions is the following. By the definition of beliefs,
$\belief^i_0 = \CondP{S=1}{\psignal_i}$. We say that the private
signals {\em induce non-atomic beliefs} when the distribution of
$\belief^i_0$ is non-atomic.  The rational behind this definition is
that it precludes the possibility of {\em indifference} or {\em ties}.
\begin{theorem}
  \label{thm:unbounded-common-knowledge}
  Let $(\mu_0,\mu_1)$ induce non-atomic beliefs. Then there exists a
  random variable $\action_\infty$ such that almost surely
  $\action^i_\infty=\action_\infty$ for all $i$.
\end{theorem}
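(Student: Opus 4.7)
The plan is to reduce the theorem to a single claim: $\P{\belief^i_\infty = \tfrac12} = 0$ for every $i$. Granted this claim, agreement among neighbors, and hence (by connectedness of $G$) among all agents, will follow cleanly from the Bayesian agreement theorem (Theorem~\ref{thm:bayesian-agreement}) applied to the discrete utility $u(s,a) = \ind{s=a}$.

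First I would carry out the reduction. Fix $(i,j) \in E$ and pick arbitrary $a^i \in \action^i_\infty$ and $a^j \in \action^j_\infty$. Theorem~\ref{thm:bayesian-agreement} gives
\[
\CondP{S=a^i}{\info^i_\infty} = \CondE{u(S,a^i)}{\info^i_\infty} = \CondE{u(S,a^j)}{\info^i_\infty} = \CondP{S=a^j}{\info^i_\infty},
\]
while Claim~\ref{thm:a-infty-opt} says $a^i$ maximizes $s \mapsto \CondP{S=s}{\info^i_\infty}$ on $\{0,1\}$, so $a^j$ must maximize it too. If $\action^i_\infty \neq \action^j_\infty$, then, since both are nonempty subsets of $\{0,1\}$, one can choose $a^i \neq a^j$, which forces both $0$ and $1$ to maximize the posterior, i.e.\ $\belief^i_\infty = \tfrac12$. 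Given the atom-free claim, this will show $\action^i_\infty = \action^j_\infty$ almost surely on every edge; a simple path argument using connectedness of $G$ then extends the common action set to all of $V$ and produces the required $\action_\infty$.

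The main obstacle is ruling out the atom $\P{\belief^i_\infty = \tfrac12} = 0$. My attack would be via the Bayes decomposition
\[
\logit(\belief^i_\infty) = \llr_i + R,
\]
where $\llr_i = \log\bigl(d\mu_1/d\mu_0\bigr)(\psignal_i)$ is agent $i$'s private log-likelihood ratio, whose law is non-atomic since $\belief^i_0$ is and $\logit \colon (0,1) \to \R$ is a homeomorphism, and $R$ is the log-likelihood contribution of everything in $\info^i_\infty$ beyond $\psignal_i$. The event $\{\belief^i_\infty = \tfrac12\}$ is $\{\llr_i = -R\}$. If $\llr_i$ and $R$ were conditionally independent given $S$ (as happens in the sequential model discussed later in this chapter, where the external observer's information is genuinely independent of $\psignal_i$), then conditioning on $R$ and invoking non-atomicity of $\llr_i$ would finish the proof. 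The delicate point is feedback: $R$ depends on the actions of $i$'s neighbors, which in turn depend on $\psignal_i$ through $i$'s own past actions. Disentangling this feedback --- for instance by an inductive argument on $t$ that tracks the conditional distribution of $\llr_i$ given all randomness external to $i$ up to time $t$ and shows it stays non-atomic in the limit --- is where the real work of the proof lies.
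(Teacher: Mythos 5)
Your reduction asks for more than the theorem needs, and the extra strength is exactly what cannot be delivered. You propose to prove $\P{\belief^i_\infty = \tfrac12} = 0$ and acknowledge that ``disentangling the feedback'' is where the real work lies; but that deferred step is not merely hard --- the method you sketch cannot establish it, and the paper deliberately avoids claiming it. The paper's Claim~\ref{clm:llr-decomp} gives the disentanglement you want: $\llr^i_t = \llr^i_0 + Y^i_t$ with $Y^i_t$ depending on $\psignal_i$ only through $i$'s own action sequence $\action^i_{[0,t)}$. But to exploit non-atomicity of $\llr^i_0$ one must first condition on (i.e.\ fix) the action sequence $\bar a$, so that $Y^i_\infty = Y^i_\infty(\psignal_{-i},\bar a)$ becomes external to $\psignal_i$; the event $\{\belief^i_\infty=\tfrac12\}$ then has to be recovered by summing over action sequences, and there are uncountably many of them, so countable additivity fails. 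The paper's Theorem~\ref{thm:mu-half-indecisive} only sums over \emph{convergent} (hence eventually constant, hence countably many) sequences, and accordingly proves only the weaker statement: a.s.\ on $\{\belief^i_\infty=\tfrac12\}$ the agent takes both actions infinitely often. Whether $\P{\belief^i_\infty=\tfrac12}=0$ holds in general is not settled by these techniques.

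The correct completion is therefore not to kill the atom at $\tfrac12$ but to show that the indifference scenario is itself a form of agreement. Your edge-wise argument via Theorem~\ref{thm:bayesian-agreement} is sound and is exactly the paper's Theorem~\ref{thm:imitation}; combining it with Theorem~\ref{thm:mu-half-indecisive} one gets (Corollary~\ref{cor:converge}) that almost surely one of three things happens: all agents' actions converge to $S$, all converge to $1-S$, or all beliefs converge to $\tfrac12$ and every agent takes both actions infinitely often --- the propagation along edges using strong connectivity being the step where non-atomicity enters (an agent observing an oscillating neighbor must have limit belief $\tfrac12$, hence must itself oscillate rather than settle by a tie-break). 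In the third case every $\action^i_\infty$ equals $\{0,1\}$ as a set of accumulation points, so the common $\action_\infty$ exists in all three cases. You should restructure your proof around this trichotomy rather than around the unproven (and possibly false) atom-free claim.
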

We refer the reader to~\cite{mossel2012asymptotic} for a proof of this
Theorem. In Section~\ref{app:example-atomic} we give an example that
shows that this claim indeed does not necessarily hold when private
signals are atomic.

The following theorem states that when such agreement is guaranteed
then the agents learn the state of the world with high probability,
when the number of agents is large. This phenomenon is known as {\em
  asymptotic learning}.
\begin{theorem}[Mossel, Sly and Tamuz]
  \label{thm:bounded-learning}
  Let $\mu_0,\mu_1$ be such that for every connected, undirected graph
  $G$ there exists a random variable $\action_\infty$ such that almost
  surely $\action_\infty^i=\action_\infty$ for all $u \in V$.  Then
  there exists a sequence $q(n)=q(n, \mu_0, \mu_1)$ such that $q(n)
  \to 1$ as $n \to \infty$, and $\P{\action_\infty = S} \geq
  q(n)$, for any choice of undirected, connected graph $G$ with $n$
  agents.
\end{theorem}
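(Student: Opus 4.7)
The strategy is to adapt the Markov-plus-Chebyshev argument used in the proof of Theorem~\ref{thm:common-actions-finite}, exploiting conditional independence of private signals together with Bayesian optimality of actions. I argue by contradiction: suppose there is a sequence of finite connected undirected graphs $G_n$ with $|V(G_n)|=n$ along which $\P{\action_\infty=S}$ does not tend to~$1$. By symmetry between the states, after passing to a subsequence we may assume $q := \CondP{\action_\infty\neq 0}{S=0} \geq q_0 > 0$, and the goal is to derive a contradiction in the limit $n\to\infty$.

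For $\eps>0$ let $K(n)=n^{-1}\sum_{i=1}^{n}\ind{\belief^i_0<\eps}$. I would obtain matching upper and lower bounds on $\CondE{K(n)}{\action_\infty=1,S=0}$ that become inconsistent for large $n$ and small $\eps$. For the upper bound, $\{\belief^i_t\}$ is a martingale and $\belief^i_0$ is $\sigma(\psignal_i)$-measurable, so $\CondE{\belief^i_\infty}{\belief^i_0<\eps}=\CondE{\belief^i_0}{\belief^i_0<\eps}<\eps$, and Markov's inequality applied to $\belief^i_\infty\in[0,1]$ gives $\CondP{\action^i_\infty=1}{\belief^i_0<\eps}\leq 2\eps$. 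Invoking the agreement hypothesis $\action^i_\infty=\action_\infty$, summing over $i\in V(G_n)$, and dividing by $\P{\action_\infty=1,S=0}=q/2$ yields
\begin{align*}
\CondE{K(n)}{\action_\infty=1,S=0} \leq \frac{4\eps\,\P{\belief^i_0<\eps}}{q}.
\end{align*}
For the lower bound, since the $\ind{\belief^i_0<\eps}$ are i.i.d.\ conditional on $S$, Lemma~\ref{lemma:conditional-chebyshev} applied conditionally on $S=0$ to $K(n)$ and the event $\{\action_\infty=1\}$ (of probability $q$) gives $\CondE{K(n)}{\action_\infty=1,S=0} \geq \CondP{\belief^i_0<\eps}{S=0} - (nq)^{-1/2}$.

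Combining these two estimates, passing to $n\to\infty$, and applying Lemma~\ref{lemma:bu-eps} to convert $\CondP{\belief^i_0<\eps}{S=0}$ into $\P{\belief^i_0<\eps}$ yields $q \leq 2\eps/(1-\eps)$ for every $\eps>0$ with $\P{\belief^i_0<\eps}>0$. In the unbounded-belief case one may take $\eps\downarrow 0$ to force $q=0$, contradicting $q\geq q_0$.

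The main obstacle is the bounded-belief case, where $\P{\belief^i_0<\eps}=0$ for $\eps$ below the support of $\belief^i_0$, so the above argument only bounds $q$ by a constant depending on $(\mu_0,\mu_1)$ rather than driving $q$ to~$0$ as $n\to\infty$. To overcome this I would replace single-agent indicators by indicators on disjoint $k$-tuples of agents: by non-atomicity of the pooled belief from $k$ independent signals, for any $\eps>0$ one can choose $k=k(\eps)$ so that the posterior from $k$ conditionally i.i.d.\ signals falls below $\eps$ with positive probability. The undirected graph structure is essential here, since within a $k$-tuple each member can, through the connected graph, eventually condition on the action of every other, so the martingale/Markov step applies to the posterior belief of any single member conditioned on the pooled-signal event being extreme. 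Running the Markov-plus-Chebyshev argument on the $\lfloor n/k\rfloor$ disjoint $k$-tuples and letting $\eps\downarrow 0$ (hence $k\to\infty$) while $n/k\to\infty$ then closes the gap.
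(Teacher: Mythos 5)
Your treatment of the unbounded-beliefs case is essentially the paper's proof of Theorem~\ref{thm:common-actions-finite} and is fine as far as it goes, but the entire content of Theorem~\ref{thm:bounded-learning} lies in the case you defer to your final paragraph, and the fix you propose there has a genuine gap. The Markov step in the single-agent argument rests on the tower property: $\CondE{\belief^i_\infty}{\belief^i_0}=\belief^i_0$ holds because $\sigma(\belief^i_0)\subseteq\sigma(\psignal_i)\subseteq\cF^i_\infty$. If you replace $\belief^i_0$ by the pooled posterior $\belief_T=\CondP{S=1}{\psignal(T)}$ of a $k$-tuple $T$, no single agent's information sigma-algebra contains $\sigma(\psignal(T))$ --- agents observe one another's \emph{actions}, never the signals themselves --- so $\CondE{\belief^i_\infty}{\belief_T}$ need not equal $\belief_T$, and the key inequality $\CondP{\action_\infty\neq 0, S=0}{\belief_T<\eps}\lesssim\eps$ does not follow. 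Concretely, the actions of the other members of $T$ may reach $i$ only in coarse, low-information form, so $\belief^i_\infty$ can remain near $\belief^i_0$, bounded away from $0$, on the event $\{\belief_T<\eps\}$. Example~\ref{ex:senate} is precisely a warning that agreement on actions plus ``each agent conditions on functions of the others' signals'' is compatible with non-learning when beliefs are bounded; your assertion that each member of a $k$-tuple ``can eventually condition on the action of every other'' (itself not literally true, since an agent only sees its neighbors' actions) is not enough to rule this out.

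The paper closes the bounded case by an entirely different mechanism: it reduces to uniformly bounded degrees via Lemma~\ref{thm:large-out-deg-graph}; uses compactness of bounded-degree rooted graphs under local convergence (Lemmas~\ref{lemma:compactness} and~\ref{lemma:inf-graphs-closed}) to extract an infinite pessimal graph $H$ attaining $p(H)=p^*$, the infimum of the learning probability over infinite bounded-degree graphs (Lemma~\ref{lemma:h-exists}); then uses the local-property Lemma~\ref{lemma:local_property} and the $\delta$-independence machinery to manufacture, inductively, $k$ conditionally almost-independent estimators each correct with probability at least $p^*-\eps$ and all available to a single vertex (Lemma~\ref{thm:independent-bits}); and finally boosts three such estimators strictly past $p^*$ (Lemma~\ref{cor:majority}), a contradiction, after which the finite statement follows by another local-limit argument. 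If you want to salvage your approach you would need a substitute for the tower-property step that genuinely exploits repeated interaction on the undirected graph, and at that point you are most of the way to rebuilding the paper's machinery.
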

Informally, when agents agree on limit action sets then they
necessarily learn the correct state of the world, with probability
that approaches one as the number of agents grows. This holds
uniformly over all possible connected and undirected social network
graphs.

The following theorem is a direct consequence of the two theorems
above, since the property proved by
Theorem~\ref{thm:unbounded-common-knowledge} is the condition required
by Theorem~\ref{thm:bounded-learning}.
\begin{theorem}
  \label{thm:non-atomic-learning}
  Let $\mu_0$ and $\mu_1$ induce non-atomic beliefs. Then there exists
  a sequence $q(n)=q(n, \mu_0, \mu_1)$ such that $q(n) \to 1$ as $n
  \to \infty$, and $\P{\action_\infty^i = S} \geq q(n)$, for all agents
  $i$ and for any choice of undirected, connected $G$ with $n$ agents.
\end{theorem}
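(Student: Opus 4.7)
The plan is essentially to chain together the two preceding theorems, since the hypothesis of one exactly produces the hypothesis of the other. First I would invoke Theorem~\ref{thm:unbounded-common-knowledge}: under the assumption that $(\mu_0,\mu_1)$ induces non-atomic beliefs, there exists a (graph-dependent) random variable $\action_\infty$ with $\action^i_\infty = \action_\infty$ almost surely for every agent $i$. Crucially, this holds on \emph{every} undirected connected graph $G$, since Theorem~\ref{thm:unbounded-common-knowledge} makes no restriction on the particular graph beyond the standing assumptions of the framework.

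Next I would feed this conclusion into Theorem~\ref{thm:bounded-learning}. The hypothesis of Theorem~\ref{thm:bounded-learning} is precisely that for every connected undirected graph the limit actions agree almost surely on a common $\action_\infty$; the previous step establishes this hypothesis. Therefore the conclusion of Theorem~\ref{thm:bounded-learning} applies: there exists a sequence $q(n) = q(n,\mu_0,\mu_1)$, depending only on the private signal distributions, with $q(n) \to 1$ and
\begin{align*}
\P{\action_\infty = S} \geq q(n)
\end{align*}
uniformly over all undirected connected graphs on $n$ vertices.

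Finally, to convert this into the statement of Theorem~\ref{thm:non-atomic-learning}, I would use the agreement again: since $\action^i_\infty = \action_\infty$ almost surely for every agent $i$, we have
\begin{align*}
\P{\action^i_\infty = S} = \P{\action_\infty = S} \geq q(n),
\end{align*}
which is the desired bound, valid for every agent $i$ and every undirected connected graph on $n$ agents.

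There is no real obstacle here, since the theorem is stated and advertised as an immediate corollary of the two preceding results; the only point that requires even a moment of care is noting that Theorem~\ref{thm:unbounded-common-knowledge} produces the agreement property uniformly in the choice of graph, which is exactly what Theorem~\ref{thm:bounded-learning} needs as input. The substantive work lies entirely in the proofs of the two theorems that are being combined.
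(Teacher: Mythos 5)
Your proposal is correct and matches the paper's own argument exactly: the paper states that Theorem~\ref{thm:non-atomic-learning} is a direct consequence of Theorem~\ref{thm:unbounded-common-knowledge} supplying the agreement hypothesis required by Theorem~\ref{thm:bounded-learning}. Your additional remark that the agreement $\action^i_\infty = \action_\infty$ lets you transfer the bound from $\action_\infty$ to each individual $\action^i_\infty$ is the right (and only) finishing touch.
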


Before delving into the proof of Theorem~\ref{thm:bounded-learning} we
introduce additional definitions in~\ref{sec:more-defs} and prove some
general lemmas in~\ref{sec:graph-limits},
\ref{sec:isomorphics-balls} and~\ref{sec:delta-ind}.

\subsection{Additional general notation}
\label{sec:more-defs}

We denote the actions of the neighbors of $i$ up to time $t$ by
\begin{align*}
  I^i_t = \{\action^j_{t'}:\:j
  \in \neigh{i},t'<t\},
\end{align*}
and let $I^i_\infty$ denote all the actions of $i$'s neighbors:
\begin{align*}
  I^i_\infty = \{\action^j_{[0,\infty)}:\: j \in \neigh{i}\} = \{\action^j_{t'}:\:j
  \in \neigh{i},t' \geq 0\}.
\end{align*}

We denote the probability that $i$ chooses the correct action at time
$t$ by
\begin{align*}
  p^i_t=\P{\action^i_t=S}.
\end{align*}
and accordingly
\begin{align*}
  p^i_\infty=\lim_{t \to \infty}p^i_t.
\end{align*}

For a set of vertices $U \subseteq V$ we denote by $\psignal(U)$ the
private signals of the agents in $U$.

\subsection{Sequences of rooted graphs and their limits}
\label{sec:graph-limits}
In this section we define a topology on undirected, connected {\em
  rooted graphs}. We call convergence in this topology {\em
  convergence to local limits}, and use it repeatedly in the proof of
Theorem~\ref{thm:bounded-learning}. The core of the proof of
Theorem~\ref{thm:bounded-learning} is the topological
Lemma~\ref{lemma:local_property}, which we prove here. This lemma is a
claim related to {\em local graph properties}, which we also introduce
here.

Let $G=(V,E)$ be an undirected, connected, finite or countably
infinite graph, and let $i \in V$ be a vertex in $G$. We denote by
$(G,i)$ the {\bf rooted graph} $G$ with root $i$.

Let $G=(V,E)$ and $G'=(V',E')$ be graphs. $h : V \to V'$ is a {\bf
  graph isomorphism} between $G$ and $G'$ if $(i,j) \in E
\Leftrightarrow (h(i), h(j)) \in E'$.

Let $(G,i)$ and $(G',i')$ be rooted graphs. Then $h : V \to V'$ is a
{\bf rooted graph isomorphism} between $(G,i)$ and $(G',i')$ if $h$ is
a graph isomorphism and $h(u) = u'$.

We write $(G,i) \cong (G',i')$ whenever there exists a rooted graph
isomorphism between the two rooted graphs.

Given a graph $G=(V,E)$ and two vertices $i, j \in V$, the graph
distance $d(i,j)$ is equal to the length in edges of a shortest
(directed) path between $i$ and $j$.  We denote by $B_r(G, i)$ the
ball of radius $r$ around the vertex $i$ in the graph $G=(V,E)$: Let
$V'$ be the set of vertices $j$ such that $d(i,j)$ is at most $r$. Let
$E' = \{(i,j)\in E:\: i,j \in V'\}$. Then $B_r(G, i)$ is the rooted
graph with vertices $V'$, edges $E'$ and root $i'$.

We next define a topology on (undirected, connected) rooted graphs (or
rather on their isomorphism classes; we shall simply refer to these
classes as graphs).  A natural metric between rooted graphs is the
following (see Benjamini and Schramm~\cite{benjamini2011recurrence},
Aldous and Steele~\cite{aldous2003objective}). Given $(G,i)$ and
$(G',i')$, let
\begin{align*}
  D((G,i),(G',i')) = 2^{-R},
\end{align*}
where
\begin{align*}
  R = \sup \{r : B_r(G,i) \cong B_r(G',i')\}.
\end{align*}
This is indeed a metric: the triangle inequality follows immediately,
and a standard diagonalization argument is needed to show that if
$D((G,i),(G',i'))=0$ then $B_\infty(G,i) \cong B_\infty(G',i')$ and so
$(G,i) \cong (G',i')$.

This metric induces a topology that will be useful to us. As usual,
the basis of this topology is the set of balls of the metric; the ball
of radius $2^{-R}$ around the {\em graph} $(G,i)$ is the set of graphs
$(G',i')$ such that $B_R(G,i) \cong B_R(G',i')$. We refer to
convergence in this topology as convergence to a {\em local limit},
and provide the following equivalent definition for it.

Let $\{(G_r,i_r)\}_{r=1}^\infty$ be a sequence of rooted graphs. We
say that the sequence converges if there exists a rooted graph
$(G',i')$ such that
\begin{align*}
  B_r(G',i') \cong B_r(G_r,i_r),
\end{align*}
for all $r \geq 1$.  We then write
\begin{align*}
  (G',i')=\lim_{r \to \infty}(G_r,i_r),
\end{align*}
and call $(G',i')$ the {\bf local limit} of the sequence
$\{(G_r,i_r)\}_{r=1}^\infty$.

Let $\cG_d$ be the set of rooted graphs with degree at most $d$.

{\bf Exercise.} Show that $\cG_d$ is {\em compact}, and deduce from
that the following lemma:
\begin{lemma}
  \label{lemma:compactness}
  Let $\{(G_r,i_r)\}_{r=1}^\infty$ be a sequence of rooted graphs in
  $\cG_d$. Then there exists a subsequence
  $\{(G_{r_i},i_{r_n})\}_{n=1}^\infty$ with $r_{n+1}>r_n$ for all $n$,
  such that $\lim_{n \to \infty}(G_{r_n},u_{r_n})$ exists.
\end{lemma}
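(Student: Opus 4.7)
The plan is to prove the lemma by a standard diagonalization argument, exploiting the crucial finiteness fact that in $\cG_d$ there are only finitely many isomorphism classes of rooted balls of any fixed radius.

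First I would establish that finiteness. Any ball $B_r(G,i)$ with $(G,i) \in \cG_d$ has at most $1 + d + d^2 + \cdots + d^r$ vertices, so up to rooted isomorphism only finitely many rooted graphs can arise as $B_r(G_s,i_s)$ across $s$. Using this, I would construct nested infinite subsequences $S_1 \supseteq S_2 \supseteq \cdots$ of the index set $\N$ as follows: by pigeonhole, some rooted graph $H_1$ equals $B_1(G_s,i_s)$ for infinitely many $s$; let $S_1$ be this set of indices. Having defined $S_R$ with $B_R(G_s,i_s) \cong H_R$ for all $s \in S_R$, apply pigeonhole again to get an infinite $S_{R+1} \subseteq S_R$ and a rooted graph $H_{R+1}$ with $B_{R+1}(G_s,i_s) \cong H_{R+1}$ for all $s \in S_{R+1}$. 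Let $r_n$ be the $n$-th smallest element of $S_n$; then $r_{n+1} > r_n$ and for every fixed $R$ the tail $r_n$ with $n \geq R$ lies in $S_R$, so $B_R(G_{r_n},i_{r_n}) \cong H_R$ eventually.

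Next I would build the local limit $(G',i')$. By construction, $B_R(H_{R+1},\text{root}) \cong H_R$ for every $R$, so I can fix rooted isomorphisms $\iota_R : H_R \to B_R(H_{R+1}, \text{root}) \subseteq H_{R+1}$, each mapping root to root. Forming the direct limit along the $\iota_R$ yields a countable rooted graph $(G',i')$ whose vertex set is $\bigcup_R H_R$ (under the identifications) with edges inherited from each $H_R$; the maps $H_R \hookrightarrow G'$ are rooted isomorphisms onto $B_R(G',i')$. Because each $H_R$ has maximum degree at most $d$ and each vertex of $G'$ lies in some $H_R$, we get $(G',i') \in \cG_d$; strong connectivity from $i'$ follows because any shortest $B_R$-path from $i_s$ to a vertex $j$ within distance $R$ stays inside $B_R(G_s,i_s)$, a property transported to $H_R$ and hence to $G'$. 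Then $B_R(G',i') \cong H_R \cong B_R(G_{r_n},i_{r_n})$ for all $n \geq R$, so $(G_{r_n},i_{r_n}) \to (G',i')$ in the local metric.

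The main subtle point, which is the only place the argument might be mishandled, is ensuring that the balls $H_R$ fit together into a single coherent infinite graph. The compatibility $B_R(H_{R+1}) \cong H_R$ is automatic from how the subsequences were chosen, but to realize these abstract isomorphism classes as actual subgraphs of one fixed $G'$ one must, as above, commit to a particular choice of rooted embedding $\iota_R$ at each stage and use the direct-limit construction; without this explicit choice the argument would only produce a sequence of balls, not a single limiting graph. Once that is in place, the finite-degree bound and the rooted-to-everywhere reachability in each $H_R$ transfer to $G'$, completing the proof.
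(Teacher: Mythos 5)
The paper does not actually prove this lemma: it invokes ``another standard diagonalization argument'' and points to Benjamini--Schramm and Aldous--Steele, so your write-up supplies the argument the paper omits rather than paralleling one it gives. The skeleton you use --- finitely many isomorphism types of radius-$r$ balls under a degree bound, nested infinite index sets by pigeonhole, a diagonal subsequence, and a direct limit along chosen embeddings $\iota_R$ to assemble the compatible sequence of ball types into a single rooted graph --- is exactly the standard argument being alluded to, and you are right that committing to the embeddings $\iota_R$ is the step one cannot skip. The verifications that $r_{n+1}>r_n$, that $B_R(G',i')\cong H_R$ eventually agrees with $B_R(G_{r_n},i_{r_n})$, and that out-degrees of $G'$ stay bounded by $d$ are all fine.

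The one genuine gap is the sentence claiming that ``strong connectivity from $i'$ follows.'' What your argument actually delivers is a directed path from the root $i'$ to every vertex of $G'$ (the paper's non-standard rootedness requirement), which for the \emph{directed} graphs that $\cG_d$ is defined to contain is strictly weaker than strong connectivity: return paths in $G_{r_n}$ may leave the ball $B_R(G_{r_n},i_{r_n})$ or have unbounded length, so nothing forces them to survive in the limit. Concretely, directed cycles of length $2r+2$ rooted anywhere are strongly connected with bounded out-degree, yet their radius-$r$ balls are directed paths on $r+1$ vertices, so the only candidate limit is the one-way infinite directed path, which is not strongly connected; under the paper's definition of convergence (the limit must itself be a strongly connected rooted graph) this sequence has no convergent subsequence, and the lemma as literally stated for all of $\cG_d$ fails. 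This is a defect of the statement more than of your argument: every application in the paper is to undirected graphs (via $\InfGraphs_d$) or to $L$-locally strongly connected graphs, and in both settings reachability from the root does upgrade to the required connectivity --- trivially in the undirected case, and in the $L$-local case because each edge on a root-to-$j$ path admits a return path of length at most $L$ lying in a slightly larger ball, so the property passes to the limit. With that restriction made explicit, your proof is complete.
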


We next define {\em local properties} of rooted graphs.  Let $P$ be
property of rooted graphs or a Boolean predicate on rooted graphs. We
write $(G,i) \in P$ if $(G,i)$ has the property, and $(G,i) \notin P$
otherwise.

We say that $P$ is a {\bf local property} if, for every $(G,i) \in P$
there exists an $r>0$ such that if $B_r(G,i) \cong B_r(G',i')$, then
$(G',i') \in P$. Let $r$ be such that $B_r(G,i) \cong B_r(G',i')
\Rightarrow (G',i') \in P$. Then we say that {\bf $(G,i)$ has property
  $P$ with radius $r$}, and denote $(G,i) \in P^{(r)}$.  That is, if
$(G,i)$ has a local property $P$ then there is some $r$ such that
knowing the ball of radius $r$ around $i$ in $G$ is sufficient to
decide that $(G,i)$ has the property $P$.

An alternative name for a local property would therefore be a {\em
  locally decidable} property. In our topology, local properties are
nothing but {\em open sets}: the definition above states that if
$(G,i) \in P$ then there exists an element of the basis of the
topology that includes $(G,i)$ and is also in $P$. This is a necessary
and sufficient condition for $P$ to be open.

We use this fact to prove the following lemma.  Let $\InfGraphs_d$ be
the set of infinite, connected, undirected graphs of degree at most
$d$, and let $\InfGraphs_d^r$ be the set of $\InfGraphs_d$-rooted
graphs
\begin{align*}
  \InfGraphs_d^r = \{(G,i) \,:\, G \in \InfGraphs_d, i \in G\}.
\end{align*}
{\bf Exercise.} Prove the following lemma.
\begin{lemma}
  \label{lemma:inf-graphs-closed}
  $\InfGraphs_d^r$ is compact.
\end{lemma}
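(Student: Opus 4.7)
The plan is to realize $\InfGraphs_d^r$ as a closed subset of the compact space $\cG_d$ from Lemma~\ref{lemma:compactness}, so that compactness follows immediately since closed subsets of compact metric spaces are compact. Note first that $\InfGraphs_d^r \subseteq \cG_d$: an undirected connected graph is strongly connected when its edges are viewed symmetrically, and bounded undirected degree implies bounded out-degree.

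To prove closure, I will take a convergent sequence $(G_n,i_n) \to (G,i)$ in $\cG_d$ with each $(G_n,i_n) \in \InfGraphs_d^r$ and verify that the limit $(G,i)$ is again in $\InfGraphs_d^r$, i.e., that $G$ is undirected, has degree at most $d$, and is \emph{infinite}. The first two properties transfer easily through local limits: for every $r$, we have $B_r(G,i) \cong B_r(G_n,i_n)$ for all sufficiently large $n$, and the properties ``every edge has a reverse edge'' and ``every vertex has degree at most $d$'' are visible inside each such ball of $G_n$ (for interior vertices, i.e.\ those at distance strictly less than $r$ from $i_n$). Letting $r\to\infty$ shows that these properties hold at every vertex of $G$.

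The main obstacle, and the only step requiring the full hypothesis, is showing that $G$ is infinite. Here I use that each $G_n$ is connected, infinite, and of bounded degree: for every $r$, since $G_n$ has infinitely many vertices reachable from $i_n$ and the ball of radius $r-1$ is finite (of size at most $1 + d + d^2 + \cdots + d^{r-1}$), there must exist a vertex at distance exactly $r$ from $i_n$. The existence of such a vertex is determined by $B_r(G_n,i_n)$, so the rooted-isomorphism $B_r(G,i) \cong B_r(G_n,i_n)$ forces $G$ to contain a vertex at distance exactly $r$ from $i$. Since this holds for every $r$, the graph $G$ has infinitely many vertices, completing the proof.

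The reason connectedness plus bounded degree is essential is that otherwise one could approach, in the local topology, a finite rooted graph by truncating infinite disconnected pieces further and further from the root; the combination of the two assumptions forces the ball to ``reach'' distance $r$ in every term of the sequence, and this reach is preserved by the local-limit isomorphisms.
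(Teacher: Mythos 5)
Your proof is correct and follows essentially the same route as the paper: both realize $\InfGraphs_d^r$ as a closed subset of the compact space $\cG_d$ from Lemma~\ref{lemma:compactness}, with closedness resting on the observation that finiteness and directedness are locally detectable from balls around the root. The only cosmetic difference is that you argue sequential closedness directly (checking that infiniteness survives the limit via nonempty spheres of every radius), while the paper phrases the same fact as the complement being a union of open local properties.
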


We now state and prove the main lemma of this section.  Note that
the set of graphs $\InfGraphs_d$ satisfies the conditions of this
lemma.
\begin{lemma}
  \label{lemma:local_property}
  Let $\GraphFam$ be a set of infinite, connected graphs, let
  $\GraphFam^r$ be the set of $\GraphFam$-rooted graphs
  \begin{align*}
     \GraphFam^r = \{(G,i) \,:\, G \in \GraphFam, i \in G\},
  \end{align*}
  and assume that $\GraphFam$ is such that $\GraphFam^r$ is compact.

  Let $P$ be a local property such that for each $G \in \GraphFam$
  there exists a vertex $j \in G$ such that $(G,j) \in P$. Then for
  each $G \in \GraphFam$ there exist an $r_0$ and infinitely many
  distinct vertices $\{j_n\}_{n = 1}^\infty$ such that $(G,j_n) \in
  P^{(r_0)}$ for all $n$.
\end{lemma}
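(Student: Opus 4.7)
I would argue by contradiction, letting the compactness hypothesis on $\GraphFam^r$ do the heavy lifting. Fix $G \in \GraphFam$ and assume toward a contradiction that for every $r$ the set $A_r = \{j \in V(G) : (G,j) \in P^{(r)}\}$ is finite.

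\textbf{Step 1: extract a limit rooted graph.} Since $G$ is infinite, choose a sequence of distinct vertices $j_1, j_2, \ldots$ in $V(G)$. By compactness of $\GraphFam^r$, after passing to a subsequence I may assume $(G, j_n) \to (G^*, j^*)$ with $(G^*, j^*) \in \GraphFam^r$. In particular $G^* \in \GraphFam$, so the hypothesis on $P$ applied to $G^*$ produces a vertex $v \in V(G^*)$ with $(G^*, v) \in P$, and locality supplies a finite radius $r'$ such that $(G^*, v) \in P^{(r')}$.

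\textbf{Step 2: transfer $P$ back to $G$.} Let $d = d_{G^*}(j^*, v)$. Local convergence implies that for all sufficiently large $n$ there is a rooted isomorphism $\phi_n : B_{d+r'}(G^*, j^*) \to B_{d+r'}(G, j_n)$. Set $v_n = \phi_n(v)$; restricting $\phi_n$ to the $r'$-ball around $v$ gives $B_{r'}(G^*, v) \cong B_{r'}(G, v_n)$, and by the definition of $P^{(r')}$ we conclude $(G, v_n) \in P^{(r')}$, i.e.\ $v_n \in A_{r'}$. By the contradiction hypothesis $A_{r'}$ is finite, so by pigeonhole some fixed $u$ satisfies $v_n = u$ for infinitely many $n$.

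\textbf{Step 3: close the contradiction.} Each such $v_n$ lies within distance $d$ of $j_n$ in $G$, so the repeated equality $v_n = u$ forces infinitely many of the distinct $j_n$ to lie within distance $d$ of the single vertex $u$. In the undirected setting (the case required for Theorem~\ref{thm:bounded-learning}), distances are symmetric and compactness of $\GraphFam^r$ gives a uniform bound $|B_d(G, u)| \leq C(d)$ depending only on $d$, so only finitely many $j_n$ can fit inside $B_d(G, u)$. This contradicts pigeonhole; hence some $A_{r_0}$ must be infinite, yielding the required radius $r_0$ and infinite family $\{j_n\}$.

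\textbf{Expected obstacle.} The delicate point is Step 3. Compactness of $\GraphFam^r$ readily bounds \emph{out}-balls uniformly, but in a general directed setting it does not obviously bound the number of predecessors of $u$ within $d$ out-steps, so the collision argument above leans on symmetry of distance. To cover directed $\GraphFam$ one would need either an iterated sub-extraction (after each collision, excise a bounded neighborhood around the offending $u$ and take a fresh limit), or to exploit the compactness more carefully to bound such preimages; the undirected case already suffices for the application.
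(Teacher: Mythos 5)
Your argument is correct in the undirected setting and reaches the same conclusion by a genuinely different route. The paper's proof is direct rather than by contradiction: it first chooses the roots $k_r$ so that the balls $B_r(G,k_r)$ are pairwise disjoint, extracts a local limit $(G',i')$ from that pre-separated sequence, finds a witness $j'$ of $P^{(r_0)}$ in $G'$ at distance $d(i',j')$ from the root, and pulls it back to a vertex $j_r \in B_R(G,i_r)$ with $R = d(i',j') + r_0$. Distinctness of the $j_r$ is then automatic, since they live in the disjoint balls $B_R(G,i_r)$ for $r,s > R$ --- no pigeonhole, no counting of a ball around a collision vertex, and no appeal to a uniform bound on $|B_d(G,u)|$. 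What your version buys is that you need no foresight in choosing the sequence (any distinct vertices will do); what the paper's version buys is that the whole burden of ``spreading out'' is paid once, up front, in the choice of the $k_r$, which makes the distinctness step trivial.

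Your flagged obstacle in Step 3 is real: the lemma as stated allows directed $\GraphFam$ (and the paper does invoke it for $L$-locally strongly connected graphs in Section~\ref{sec:locally-connected}), and compactness of $\GraphFam^r$ only forces uniformly bounded \emph{out}-balls, so bounding the number of $j_n$ with $d(j_n,u)\leq d$ requires control of in-balls. But note that the paper's proof quietly leans on the very same thing: the existence of roots $k_r$ with $B_r(G,k_r)$ pairwise disjoint is obtained greedily by avoiding the set of vertices whose $r$-ball meets a fixed finite set, and that avoidance set is only guaranteed finite when in-balls are finite (it can be all of $V$ if some vertex has infinite in-degree). So neither argument covers fully general directed $\GraphFam$ without an extra hypothesis; both are sound for the two applications actually made (undirected bounded-degree graphs, and $L$-locally strongly connected bounded-degree graphs, where the in-ball of radius $d$ is contained in an out-ball of radius $Ld$ and hence is uniformly finite). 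Stating that extra hypothesis explicitly would repair either proof; your ``iterated sub-extraction'' fix is unnecessary once you observe this.
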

\begin{figure}[h]
  \centering
  \includegraphics[scale=1]{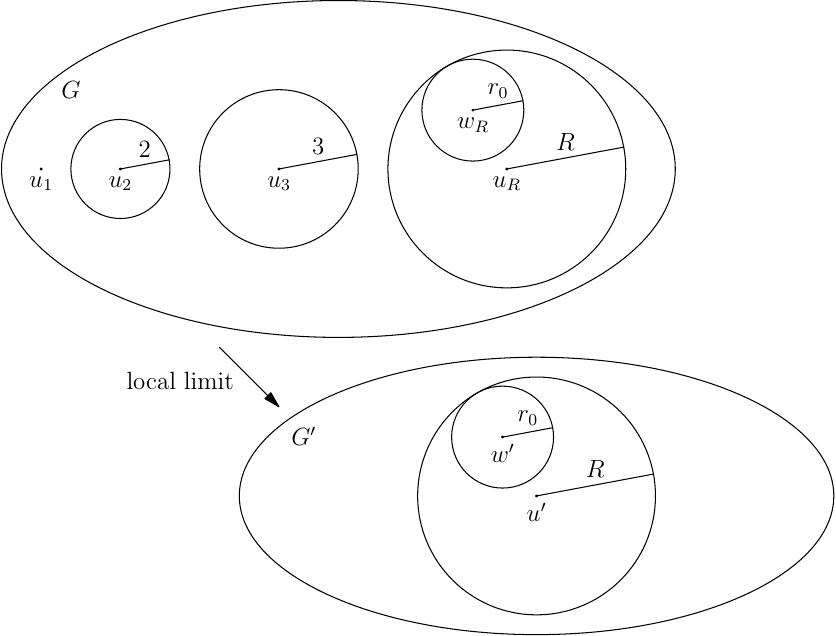}
  \caption{\label{fig:local_property} Schematic diagram of the proof
    of lemma~\ref{lemma:local_property}. The rooted graph $(G',i')$ is
    a local limit of $(G,i_r)$. For $r \geq R$, the ball $B_R(G',i')$
    is isomorphic to the ball $B_R(G,i_r)$, with $w' \in G'$
    corresponding to $j_r \in G$.}
\end{figure}

\begin{proof}
  Let $G$ be an arbitrary graph in $\GraphFam$. Consider a sequence
  $\{k_r\}_{r=1}^\infty$ of vertices in $G$ such that for all $r,s
  \in \N$ the balls $B_r(G,k_r)$ and $B_s(G,k_s)$ are disjoint.

  Since $\GraphFam^r$ is compact, the sequence $\{(G,
  k_r)\}_{r=1}^\infty$ has a converging subsequence $\{(G,
  k_{r_n})\}_{n=1}^\infty$ with $r_{n+1}>r_n$.  Write $i_r = k_{r_n}$,
  and let
  \begin{align*}
    (G',i') = \lim_{r \to \infty}(G, i_r).
  \end{align*}
  Note that since $\GraphFam^r$ is compact, $(G',i') \in \GraphFam^r$
  and in particular $G' \in \GraphFam$ is an infinite,
  connected graph.  Note also that since $r_{n+1}>r_n$, it also holds
  that the balls $B_r(G,i_r)$ and $B_s(G,i_s)$ are disjoint for all
  $r,s \in \N$.

  Since $G' \in \GraphFam$, there exists a vertex $j' \in G'$ such that
  $(G',j') \in P$.  Since $P$ is a local property, $(G',j') \in
  P^{(r_0)}$ for some $r_0$, so that if $B_{r_0}(G',j') \cong
  B_{r_0}(G,j)$ then $(G,j) \in P$.

  Let $R = d(i',j')+r_0$, so that $B_{r_0}(G',j') \subseteq
  B_R(G',i')$. Then, since the sequence $(G,i_r)$ converges to
  $(G',i')$, for all $r \geq R$ it holds that
  $B_R(G,i_r)\cong~B_R(G',i')$. Therefore, for all $r>R$ there exists
  a vertex $j_r \in B_R(G,j_r)$ such that $B_{r_0}(G,j_r) \cong
  B_{r_0}(G',j')$. Hence $(G,j_r) \in P^{(r_0)}$ for all $r>R$ (see
  Fig~\ref{fig:local_property}). Furthermore, for $r,s > R$, the balls
  $B_R(G,i_r)$ and $B_R(G,i_s)$ are disjoint, and so $j_r \neq j_s$.

  We have therefore shown that the vertices $\{j_r\}_{r > R}$ are an
  infinite set of distinct vertices such that $(G,j_r) \in P^{(r_0)}$,
  as required.

\end{proof}

\subsection{Coupling isomorphic balls}
\label{sec:isomorphics-balls}
This section includes technical claims that we will use later. Their
spirit is that everything that happens to an agent up to time $t$
depends only on the state of the world and a ball of radius $t$ around
it. We leave their proofs as an exercise.

\begin{lemma}
  \label{lemma:p-iso}
  Consider two processes with identical private signal distributions
  $(\mu_0,\mu_1)$, on different graphs $G = (V,E)$ and $G' = (V',E')$.

  Let $t \geq 1$, $i \in V$ and $i' \in V'$ be such that there exists
  a rooted graph isomorphism $h:B_t(G,i) \to B_t(G',i')$.

  Let $M$ be a random variable that is measurable in
  $\sigma(\info^i_t)$. Then there exists an $M'$ that is measurable in
  $\info^{i'}_t$ such that the distribution of $(M,S)$ is identical to
  the distribution of $(M',S')$.
\end{lemma}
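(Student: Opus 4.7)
The plan is to leverage Claim \ref{clm:a-from_cf} to express everything agent $i$ sees as a deterministic function of the private signals on $B_t(G,i)$, and then transport this function across the isomorphism $h$.

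First I would observe that $\sigma(\info^i_t)$ is generated by $\psignal_i$ together with $I^i_t$, and by Claim \ref{clm:a-from_cf} the latter is a deterministic function of $\psignal(B_t(G,i))$. Since $i \in B_t(G,i)$, this means $\sigma(\info^i_t) \subseteq \sigma(\psignal(B_t(G,i)))$, so any $M$ measurable in $\sigma(\info^i_t)$ can be written as $M = f(\psignal(B_t(G,i)))$ for some measurable $f$ defined on $\Omega^{B_t(G,i)}$. The analogous statement holds on the primed side: $\info^{i'}_t$ determines, and is determined through a fixed function $f'$, by $\psignal'(B_t(G',i'))$, where $f'$ has the same form as $f$ in that it is built out of the same update rules.

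Next I would use the isomorphism $h : B_t(G,i) \to B_t(G',i')$ to define $M'$. Concretely, I set $M' := f\bigl(\psignal'_{h(j)} : j \in B_t(G,i)\bigr)$, i.e.\ apply $f$ to the primed signals after relabeling vertices by $h^{-1}$. Because $h$ is a rooted graph isomorphism, the inductive construction of actions in Claim \ref{clm:a-from_cf} is preserved: the update rule for each agent depends only on its private signal and on the tuple of neighbors, and $h$ matches neighborhoods. Consequently $M'$ is indeed measurable in $\info^{i'}_t$.

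Finally I would compare the joint distributions. Under $\mathbb{P}_{\mu_0,\mu_1,V}$, conditional on $S$, the signals $\psignal(B_t(G,i))$ are i.i.d.\ $\mu_S$ on a vertex set of size $|B_t(G,i)|$, and likewise on the primed side the signals $\psignal'(B_t(G',i'))$ are i.i.d.\ $\mu_{S'}$ on a set of the same cardinality (since $h$ is a bijection of those sets). Because $S$ and $S'$ each have the same marginal (uniform on $\{0,1\}$) and because $f$ is applied to an i.i.d.\ tuple indexed by the two sides that $h$ identifies, the pair $(\psignal(B_t(G,i)), S)$ has the same law as $(\psignal'(B_t(G',i')) \circ h, S')$. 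Pushing forward by $f$ yields that $(M,S)$ and $(M',S')$ have the same distribution, completing the proof.

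The only delicate point, and the one I would write out most carefully, is the verification that the same function $f$ produced by Claim \ref{clm:a-from_cf} on the $G$-side really does give a $\sigma(\info^{i'}_t)$-measurable random variable when evaluated with signals pulled back through $h$; this is where the rooted nature of the isomorphism (matching not just edges but the root) is essential, so that the inductive construction of $I^i_t$ and $I^{i'}_t$ match step by step.
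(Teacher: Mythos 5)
Your proposal is correct and follows essentially the same route as the paper's proof: both reduce $M$ to a deterministic function of the private signals on $B_t(G,i)$ via Claim~\ref{clm:a-from_cf} and transport that function across the rooted isomorphism $h$, the only cosmetic difference being that the paper phrases the comparison as an explicit coupling ($S=S'$, $\psignal_j=\psignal'_{h(j)}$) while you push the law forward directly. The ``delicate point'' you flag---that the function produced by Claim~\ref{clm:a-from_cf} depends only on the isomorphism class of the rooted ball---is likewise used (implicitly) in the paper's proof when it asserts $I^i_t=I^{i'}_t$ under the identification by $h$.
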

In particular, we use this lemma in the case where $M$ is an estimator
of $S$. Then this lemma implies that the probability that $M=S$ is
equal to the probability that $M'=S'$.

Recall that $p^i_t = \P{\action^i_t = S} = \max_{A \in
  \sigma(\info^i_t)}\P{A=S}$. Hence we can apply this lemma (\ref{lemma:p-iso})
above to $\action^i_t$ and $\action^{i'}_t$:
\begin{corollary}
  \label{cor:p-iso}
  If $B_t(G,i)$ and $B_t(G',i')$ are isomorphic then $p^i_t =
  p_{i'}(t)$.
\end{corollary}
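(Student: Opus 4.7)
The plan is to derive this as an essentially immediate application of Lemma~\ref{lemma:p-iso} with the choice $M = \action^i_t$. First I would note that by definition $\action^i_t$ is $\sigma(\info^i_t)$-measurable (it is obtained by the fixed rule $\argmax_{s}\CondP{S=s}{\info^i_t}$, with the prescribed deterministic tie-breaking), so Lemma~\ref{lemma:p-iso} applies and produces a random variable $M'$ measurable in $\info^{i'}_t$ such that $(M,S)$ and $(M',S')$ have the same joint distribution.

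Next I would observe that the $M'$ produced by the lemma is in fact the action $\action^{i'}_t$ itself. Indeed, the proof of the lemma couples the two processes by setting $S=S'$ and $\psignal_{i'} = \psignal_{h(j)}$ for $h: B_t(G,i) \to B_t(G',i')$; under this coupling Claim~\ref{clm:a-from_cf} gives $I^i_t = I^{i'}_t$ (after identifying vertices via $h$). Since the action $\action^j_t = \argmax_{s \in \{0,1\}}\CondP{S=s}{\info^j_t}$ is computed from $(\psignal_j, I^j_t)$ by the same deterministic rule in both processes, we get $\action^i_t = \action^{i'}_t$ under the coupling. In particular $(\action^i_t, S)$ and $(\action^{i'}_t, S')$ have identical joint distribution.

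Taking the event $\{\action^i_t = S\}$ and its image $\{\action^{i'}_t = S'\}$, equality of joint distributions gives
\[
p^i_t = \P{\action^i_t = S} = \P{\action^{i'}_t = S'} = p_{i'}(t),
\]
which is the desired conclusion. There is no substantive obstacle here; the only point that deserves care is verifying that the tie-breaking rule and the form of the action are the same function of the local information in both processes, so that the $M'$ supplied by Lemma~\ref{lemma:p-iso} may be identified with $\action^{i'}_t$ rather than merely having the same distribution as some other $\sigma(\info^{i'}_t)$-measurable variable.
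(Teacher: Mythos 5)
Your proposal is correct and follows essentially the same route as the paper: the corollary is obtained as an immediate application of Lemma~\ref{lemma:p-iso} to the action at time $t$. The only minor difference is in the final step: the paper invokes the variational characterization $p^i_t = \max_{A \in \sigma(\info^i_t)}\P{A=S}$, so that equality of the two probabilities follows without having to identify the transported estimator $M'$ with $\action^{i'}_t$ itself, whereas you carry out that identification directly --- which is valid, as you note, because the MAP rule and the tie-breaking convention are the same deterministic function of the (identically distributed) local data in both processes.
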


\subsection{$\delta$-independence}
\label{sec:delta-ind}
To prove that agents learn $S$ we will show that the agents must, over
the duration of this process, gain access to a large number of
measurements of $S$ that are {\em almost} independent. To formalize
the notion of almost-independence we define $\delta$-independence and
prove some easy results about it. The proofs in this section are
again left as an exercise to the reader.

Let $\mu$ and $\nu$ be two measures defined on the same space. We
denote the total variation distance between them by
$\dtv(\mu,\nu)$. Let $A$ and $B$ be two random variables with joint
distribution $\mu_{(A,B)}$. Then we denote by $\mu_A$ the marginal
distribution of $A$, $\mu_B$ the marginal distribution of $B$, and
$\mu_A\times\mu_B$ the product distribution of the marginal distributions.

Let $(X_1,X_2,\dots,X_k)$ be random variables.  We refer to them as
{\bf $\delta$-independent} if their joint distribution
$\mu_{(X_1,\ldots,X_k)}$ has total variation distance of at most
$\delta$ from the product of their marginal distributions
$\mu_{X_1}\times\cdots\times\mu_{X_k}$:
\begin{align*}
  \dtv(\mu_{(X_1,\ldots,X_k)}, \mu_{X_1}\times\cdots\times\mu_{X_k})
  \leq \delta.
\end{align*}
Likewise, $(X_1,\ldots,X_l)$ are {\bf $\delta$-dependent} if the
distance between the distributions is more than $\delta$.

\begin{claim}
  \label{clm:delta-independent}
  Let $A$, $B$ and $C$ be random variables such that $\P{A \neq B} \leq
  \delta$ and $(B,C)$ are $\delta'$-independent. Then $(A,C)$ are
  $2\delta+\delta'$-independent.
\end{claim}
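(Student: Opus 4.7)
The plan is to bound $\dtv(\mu_{(A,C)}, \mu_A \times \mu_C)$ by inserting two intermediate distributions and applying the triangle inequality for total variation distance. The natural chain to use is
\[
\mu_{(A,C)} \;\rightsquigarrow\; \mu_{(B,C)} \;\rightsquigarrow\; \mu_B \times \mu_C \;\rightsquigarrow\; \mu_A \times \mu_C,
\]
so the proof reduces to bounding the total variation distance across each of these three links by $\delta$, $\delta'$, and $\delta$ respectively.

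For the first link, I would use the joint distribution of the triple $(A,B,C)$ (which exists because the hypothesis $\P{A \neq B} \leq \delta$ already presupposes a joint law for $A$ and $B$) to define a coupling of $\mu_{(A,C)}$ and $\mu_{(B,C)}$: sample $(A,B,C)$ once and output the pair $((A,C),(B,C))$. The two outputs disagree only when $A \neq B$, so the coupling characterization of total variation gives $\dtv(\mu_{(A,C)}, \mu_{(B,C)}) \leq \P{A \neq B} \leq \delta$. The second link is exactly the hypothesis of $\delta'$-independence of $(B,C)$. For the third link, I would note that the total variation distance between two product measures with a common factor is controlled by the distance between the differing marginals: $\dtv(\mu_B \times \mu_C, \mu_A \times \mu_C) = \dtv(\mu_B, \mu_A)$, and the same coupling $(A,B)$ from above bounds this by $\P{A \neq B} \leq \delta$.

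Adding the three bounds via the triangle inequality $\dtv(\mu,\nu) \leq \dtv(\mu,\gamma_1) + \dtv(\gamma_1,\gamma_2) + \dtv(\gamma_2,\nu)$ yields
\[
\dtv(\mu_{(A,C)}, \mu_A \times \mu_C) \leq \delta + \delta' + \delta = 2\delta + \delta',
\]
which is the desired $(2\delta+\delta')$-independence of $(A,C)$. There is no real obstacle here; the only thing worth being careful about is the third step, where one must remember that tensoring a common factor on both sides does not inflate the total variation distance (this follows immediately from the coupling definition applied factorwise).
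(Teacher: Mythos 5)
Your proof is correct and follows essentially the same route as the paper: the same three-step chain $\mu_{(A,C)} \rightsquigarrow \mu_{(B,C)} \rightsquigarrow \mu_B\times\mu_C \rightsquigarrow \mu_A\times\mu_C$ with the triangle inequality, bounding the first and third links by $\delta$ via the coupling induced by the joint law of $(A,B,C)$ and the middle link by the $\delta'$-independence hypothesis. The paper handles the third link by observing that the same coupling works under the product measure $\mu_{(A,B)}\times\mu_C$, which is just a rephrasing of your ``tensoring a common factor does not inflate total variation'' remark.
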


\begin{claim}
  \label{clm:function-independent}
  Let $(X,Y)$ be $\delta$-independent, and let $Z = f(Y,B)$ for some
  function $f$ and $B$ that is independent of both $X$ and $Y$. Then
  $(X,Z)$ are also $\delta$-independent.
\end{claim}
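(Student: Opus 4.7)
The plan is to reduce the claim to the data-processing (monotonicity) property of the total variation distance: applying the same (possibly randomized) function to two distributions can only decrease their total variation distance. The map of interest will take a pair $(x,y)$ to $(x,f(y,B))$, where $B$ is drawn independently from its fixed distribution.

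First I would define the kernel $T$ on pairs by $T(x,y) := (x, f(y,B))$, with $B$ an independent copy of the given randomizer. Applying $T$ to the joint law $\mu_{(X,Y)}$ yields exactly the law $\mu_{(X,Z)}$, since $B$ is independent of $(X,Y)$ by hypothesis. Applying $T$ to the product law $\mu_X \times \mu_Y$ yields the law of $(X', f(Y',B))$ where $X'$, $Y'$, $B$ are mutually independent with the correct marginals; the marginal law of $f(Y',B)$ coincides with that of $Z = f(Y,B)$, and $X'$ is independent of $(Y',B)$ and therefore of $f(Y',B)$, so this output law is precisely $\mu_X \times \mu_Z$.

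Then I would invoke the contraction property of total variation under a common Markov kernel, which in this coupling formulation is immediate: given any coupling $\nu$ of $\mu_{(X,Y)}$ and $\mu_X \times \mu_Y$ that achieves the variational distance, append a single independent $B$ and push forward by $T$ on both coordinates to obtain a coupling of $\mu_{(X,Z)}$ and $\mu_X \times \mu_Z$ whose disagreement probability is no larger than that of $\nu$. Chaining these observations gives
\begin{equation*}
\dtv{\mu_{(X,Z)}}{\mu_X \times \mu_Z} \;\le\; \dtv{\mu_{(X,Y)}}{\mu_X \times \mu_Y} \;\le\; \delta,
\end{equation*}
which is the desired $\delta$-independence of $(X,Z)$.

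There is no real obstacle here; the only point requiring a modicum of care is ensuring that the randomizer $B$ used in the definition of $Z$ is independent of \emph{both} sides of the comparison, i.e.\ of the joint $(X,Y)$ and of the independent copy $(X',Y')$ used to construct $\mu_X \times \mu_Y$. Since the hypothesis grants $B$ independent of $X$ and of $Y$ (and we are free to take an independent copy for the product construction), this is automatic. The argument is thus essentially a one-line application of the data-processing inequality for total variation, and the writeup can safely be phrased either via couplings or directly via the triangle-inequality manipulation used in the proof of Claim~\ref{clm:delta-independent}.
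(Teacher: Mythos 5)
Your proposal is correct and is essentially the paper's own argument: the paper also tensorizes with an independent copy of $B$, takes a coupling of $\mu_{(X,Y)}\times\mu_B$ and $\mu_X\times\mu_Y\times\mu_B$ achieving disagreement probability at most $\delta$, and pushes it forward through $(x,y,b)\mapsto(x,f(y,b))$, which is precisely your data-processing/contraction step. Your added check that the image of the product measure is exactly $\mu_X\times\mu_Z$ is a point the paper leaves implicit, but the route is the same.
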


\begin{claim}
  \label{clm:delta-ind-additive}
  Let $A=(A_1,\ldots,A_k)$, and $X$ be random variables. Let
  $(A_1,\ldots,A_k)$ be $\delta_1$-independent and let $(A,X)$ be
  $\delta_2$-independent. Then $(A_1,\ldots,A_k,X)$ are
  $(\delta_1+\delta_2)$-independent.
\end{claim}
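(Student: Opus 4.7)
The plan is to use the triangle inequality for total variation distance together with the fact that taking a product with a common marginal preserves $d_{\mathrm{TV}}$.

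First I would introduce an intermediate measure between the joint distribution $\mu_{(A_1,\ldots,A_k,X)}$ and the fully-factored product $\mu_{A_1}\times\cdots\times\mu_{A_k}\times\mu_X$: namely, the ``partially factored'' measure $\mu_{(A_1,\ldots,A_k)}\times\mu_X = \mu_A\times\mu_X$. Applying the triangle inequality gives
\begin{align*}
  \dtv{\mu_{(A_1,\ldots,A_k,X)}}{\mu_{A_1}\times\cdots\times\mu_{A_k}\times\mu_X}
  &\leq \dtv{\mu_{(A,X)}}{\mu_A\times\mu_X} \\
  &\quad + \dtv{\mu_A\times\mu_X}{\mu_{A_1}\times\cdots\times\mu_{A_k}\times\mu_X}.
\end{align*}

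Next I would bound each term by one of the two hypotheses. The first term is precisely $\dtv{\mu_{(A,X)}}{\mu_A\times\mu_X}$, which is at most $\delta_2$ by the assumption that $(A,X)$ is $\delta_2$-independent. For the second term, I would use the standard fact that $\dtv{\nu_1\times\rho}{\nu_2\times\rho} = \dtv{\nu_1}{\nu_2}$ for any probability measure $\rho$: this follows immediately from the coupling definition of total variation, by coupling the $\rho$-components identically on both sides and using the optimal coupling between $\nu_1$ and $\nu_2$. Applying this with $\rho = \mu_X$, $\nu_1 = \mu_A$, and $\nu_2 = \mu_{A_1}\times\cdots\times\mu_{A_k}$ reduces the second term to $\dtv{\mu_A}{\mu_{A_1}\times\cdots\times\mu_{A_k}}$, which is at most $\delta_1$ by the $\delta_1$-independence of $(A_1,\ldots,A_k)$.

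Summing the two bounds yields $\delta_1+\delta_2$, which is the desired conclusion. There is no real obstacle here; the only point worth double-checking is the product-stability of $d_{\mathrm{TV}}$, but that is an elementary consequence of the coupling characterization already used in Claims~\ref{clm:delta-independent} and~\ref{clm:function-independent}. Alternatively, one could give a coupling-based proof parallel to those earlier claims: combine an optimal coupling of $\mu_{(A,X)}$ with $\mu_A\times\mu_X$ (failing with probability at most $\delta_2$) and an optimal coupling of $\mu_A$ with $\mu_{A_1}\times\cdots\times\mu_{A_k}$ (failing with probability at most $\delta_1$), and apply a union bound. Either route gives the same $\delta_1+\delta_2$ bound.
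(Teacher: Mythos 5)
Your proof is correct and follows essentially the same route as the paper's: the same intermediate measure $\mu_A\times\mu_X$, the same triangle inequality, and the same observation that tensoring both sides with $\mu_X$ preserves the total variation bound coming from the $\delta_1$-independence of $(A_1,\ldots,A_k)$. The only difference is that you spell out the product-stability of total variation distance, which the paper leaves implicit.
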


As an application of these claim we state the following lemma. The
proof is again left as a (non-trivial) exercise.
\begin{lemma}
  \label{cor:majority}
  For every $1/2 < p < 1$ there exist $\delta = \delta(p) >0$ and
  $\eta = \eta(p) > 0$  such that if $S$ and
  $(X_1,X_2,X_3)$ are binary random variables with $\P{S=1}=1/2$, $1/2
  < p-\eta \leq \P{X_i=S} < 1$, and $(X_1,X_2,X_3)$ are
  $\delta$-independent conditioned on $S$ then $\P{a(X_1,X_2,X_3) = S}
  > p$, where $a$ is the MAP estimator of $S$ given $(X_1,X_2,X_3)$.
\end{lemma}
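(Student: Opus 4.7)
The plan is to approximate the joint law of $(X_1,X_2,X_3,S)$ by an idealized law under which the $X_i$ are genuinely conditionally independent given $S$, analyze the majority vote in that idealized setting (where it strictly improves on each $X_i$), and transfer the bound back to $X$ at a cost of $\delta$ in total variation. Throughout, write $q_i := \P{X_i = S} \in (p - \eta, 1)$ and $q := \min_i q_i$.

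First I would construct $\tilde{X}_1,\tilde{X}_2,\tilde{X}_3$, coupled to $S$, so that conditioned on $S$ they are mutually independent with the same $S$-conditional marginals as the $X_i$. Averaging the conditional $\delta$-independence hypothesis over the uniform prior on $S$ yields
\[
\dtv{\mu_{(X_1,X_2,X_3,S)}}{\mu_{(\tilde{X}_1,\tilde{X}_2,\tilde{X}_3,S)}} \leq \delta.
\]
Let $a$ and $\tilde{a}$ denote the MAP estimators of $S$ based on $X$ and $\tilde X$ respectively. Applying the TV bound to the event $\{\tilde{a}(\cdot) = S\}$ gives $|\P{\tilde{a}(X)=S} - \P{\tilde{a}(\tilde{X})=S}| \leq \delta$, and optimality of $a$ for $X$ gives $\P{a(X)=S} \geq \P{\tilde{a}(X)=S}$. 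Combining, $\P{a(X)=S} \geq \P{\tilde{a}(\tilde{X})=S} - \delta$.

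Next I would analyze the idealized (conditionally independent) case. Introduce auxiliary i.i.d.-given-$S$ variables $\tilde{Y}_i$ with $\P{\tilde{Y}_i = S} = q$, coupled to $\tilde{X}$ by the standard thinning: conditional on $S$ and independently across $i$, if $\tilde{X}_i = S$ set $\tilde{Y}_i = S$ with probability $q/q_i$ (else $\tilde{Y}_i = 1-S$), and if $\tilde{X}_i \neq S$ set $\tilde{Y}_i = 1-S$ deterministically. This preserves the required marginals while ensuring $\tilde{Y}_i = S \Rightarrow \tilde{X}_i = S$, so majority agreement for $\tilde{Y}$ implies majority agreement for $\tilde{X}$. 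The i.i.d.\ majority success probability is $f(q) := 3q^2 - 2q^3$, and the factorization $f(q) - q = q(2q-1)(1-q)$ is strictly positive for $q \in (\half,1)$; moreover $f$ is strictly increasing on $[\half,1]$. Since $\tilde{a}$ is Bayes-optimal it dominates the majority estimator, so
\[
\P{\tilde{a}(\tilde{X})=S} \;\geq\; \P{\mathrm{maj}(\tilde{X})=S} \;\geq\; \P{\mathrm{maj}(\tilde{Y})=S} \;=\; f(q) \;\geq\; f(p-\eta).
\]

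Putting the two steps together, $\P{a(X)=S} \geq f(p-\eta) - \delta$. Since $f$ is continuous and $f(p) > p$, I would choose $\eta = \eta(p) > 0$ small enough that $f(p-\eta) \geq p + \tfrac{2}{3}(f(p)-p)$ and then $\delta = \delta(p) := \tfrac{1}{3}(f(p)-p)$, yielding $\P{a(X)=S} \geq p + \tfrac{1}{3}(f(p)-p) > p$. The main obstacle, such as it is, is the thinning coupling: one must check that it simultaneously (i) realizes the prescribed marginal success probability $q$ for each $\tilde{Y}_i$, (ii) preserves conditional independence of the $\tilde{Y}_i$'s given $S$, and (iii) enforces the implication $\tilde{Y}_i = S \Rightarrow \tilde{X}_i = S$. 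The rest is a routine assembly of the TV bound, MAP optimality, and the elementary calculus of $f$.
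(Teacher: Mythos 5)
Your first step---the total-variation reduction to exactly conditionally independent bits, plus MAP optimality to pass from $\tilde a$ to $a$---is fine and is the same move the paper makes. The gap is in your analysis of the idealized case: the evaluation $\P{\mathrm{maj}(\tilde Y)=S} = 3q^2-2q^3$ does not follow from your hypotheses. Conditional independence given $S$ together with $\P{\tilde Y_i = S} = q$ does \emph{not} make the agreement indicators $\ind{\tilde Y_i = S}$ unconditionally i.i.d.\ Bernoulli$(q)$: the two conditional success probabilities $\CondP{\tilde Y_i = S}{S=1} = q+\delta_i$ and $\CondP{\tilde Y_i = S}{S=0} = q-\delta_i$ are not pinned down by the unconditional marginal, and when $\delta_i \neq 0$ the agreement events are positively correlated, which degrades the majority. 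This is not a removable technicality. Take $q=0.51$ with $\CondP{\tilde Y_i=S}{S=1}=1$ and $\CondP{\tilde Y_i=S}{S=0}=0.02$ for each $i$: then $\P{\tilde Y_i=S}=0.51$ for each $i$, but $\P{\mathrm{maj}(\tilde Y)=S} = \half\cdot 1 + \half\bigl(3(0.02)^2(0.98)+(0.02)^3\bigr) \approx 0.5006 < 0.51$, i.e.\ the majority is strictly \emph{worse} than a single bit. So the chain $\P{\tilde a(\tilde X)=S} \geq \P{\mathrm{maj}(\tilde Y)=S} \geq f(p-\eta)$ breaks at its last link, and no choice of $\eta,\delta$ rescues it. Your thinning coupling only controls the unconditional marginal of each $\tilde Y_i$, so it cannot exclude this skewed configuration.

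This is exactly the issue that the paper's Lemma~\ref{clm:majority} is built to handle: it parametrizes the skews $\delta_i$, uses the majority vote only when all $|\delta_i|$ are small (where your computation is valid up to a controlled coupling error), and when some pair of skews has non-negligible sum it switches to a different estimator, the product $X_1X_2$, whose success probability is $p + \half(2p-1)(\delta_1+\delta_2)$. Since the MAP estimator dominates whichever estimator is used in each case, the lemma follows. To repair your proof you need to import this case analysis (or some other estimator-switching argument valid for skewed conditional laws); the majority estimator alone does not suffice.
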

In other words, one's odds of guessing $S$ using three conditionally
almost-independent bits are greater than using a single bit.

\subsection{Asymptotic learning}
\label{sec:learning}

In this section we prove Theorem~\ref{thm:bounded-learning}.  To prove
this theorem we will need a number of intermediate results, which are
given over the next few sections.

\subsubsection{Estimating the limiting optimal action set $\action_\infty$}
We would like to show that although the agents have a common optimal
action set $\action_\infty$ only at the limit $t \to \infty$, they can estimate
this set well at a large enough time $t$.

The action $\action^i_t$ is agent $i$'s MAP estimator of $S$ at time
$t$. We likewise define $\estL^i_t$ to be
agent $i$'s MAP estimator of $\action_\infty$, at time $t$:
\begin{align}
  \label{eq:l-i-t}
  \estL^i_t = \argmax_{\estL \in 0,1,\{0,1\}\}}
  \CondP{\action_\infty = \estL}{\info^i_t}.
\end{align}
We show that the sequence of random variables $\estL^i_t$ converges to
$\action_\infty$ for every $i$, or that alternatively
$\estL^i_t=\action_\infty$ for each agent $i$ and $t$ large enough:
\begin{lemma}
  \label{lemma:L-estimate}
  $\P{\lim_{t \to \infty}\estL^i_t = \action_\infty} = 1$ for all $i \in V$.
\end{lemma}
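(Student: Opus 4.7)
The plan is to deduce the result from Doob's martingale convergence theorem, using the fact that under the standing assumption of the theorem, $\action_\infty$ is measurable with respect to the eventual information available to agent $i$.

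First I would verify the measurability claim: $\action_\infty \in \sigma(\info^i_\infty)$ for every $i \in V$, where $\info^i_\infty = \bigcup_t \info^i_t$. By construction, each action $\action^i_t$ is a deterministic function of $\info^i_t$, so the full sequence $(\action^i_0, \action^i_1, \ldots)$ is $\sigma(\info^i_\infty)$-measurable. The accumulation set $\action^i_\infty$ is a deterministic function of this sequence, hence also $\sigma(\info^i_\infty)$-measurable. The hypothesis of Theorem~\ref{thm:bounded-learning} (inherited from Theorem~\ref{thm:unbounded-common-knowledge}) guarantees that almost surely $\action_\infty = \action^i_\infty$, so $\action_\infty$ is $\sigma(\info^i_\infty)$-measurable as well.

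Next, for each possible value $\ell \in \{0, 1, \{0,1\}\}$, define
\[
M^\ell_t := \CondP{\action_\infty = \ell}{\info^i_t}.
\]
Since $\{\sigma(\info^i_t)\}_{t \in \N}$ is a filtration, each $M^\ell_t$ is a bounded martingale, and by Doob's martingale convergence theorem
\[
M^\ell_t \;\longrightarrow\; \CondP{\action_\infty = \ell}{\info^i_\infty} \quad \text{almost surely.}
\]
Combining with the previous paragraph, $\CondP{\action_\infty = \ell}{\info^i_\infty} = \ind{\action_\infty = \ell}$ almost surely, so the limit equals $1$ when $\ell$ is the true realized value of $\action_\infty$ and $0$ otherwise.

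Finally, I would conclude by the definition of $\estL^i_t$ in~(\ref{eq:l-i-t}): on the almost sure event that $M^{\action_\infty}_t \to 1$ and $M^\ell_t \to 0$ for each $\ell \neq \action_\infty$, there exists a (random) time $T$ such that for all $t \geq T$ the unique maximizer over the three possible values is $\action_\infty$ itself, whence $\estL^i_t = \action_\infty$. The main potential obstacle is purely notational, namely handling the ternary value set $\{0,1,\{0,1\}\}$ and confirming measurability when $\action^i_\infty$ is defined as a set of accumulation points rather than a pointwise limit; once the measurability of $\action_\infty$ in $\sigma(\info^i_\infty)$ is granted, the rest is a routine martingale argument that goes through identically for any finite-valued target.
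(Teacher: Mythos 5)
Your proof is correct and follows essentially the same route as the paper: the paper factors the argument through a general Lemma~\ref{lemma:estimation-converges} (MAP estimators of a finite-valued, $\cK_\infty$-measurable random variable converge a.s.), whose proof is exactly your martingale-convergence-to-indicators argument. The only difference is that you inline that lemma and explicitly verify the measurability of $\action_\infty$ in $\sigma(\info^i_\infty)$, a hypothesis the paper leaves implicit.
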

Lemma~\ref{lemma:L-estimate} follows by direct application of the more
general Lemma~\ref{lemma:estimation-converges} which we leave as an
exercise.  Note that a consequence is that
$\lim_{t \to \infty} \P{\estL^i_t = \action_\infty} = 1$.

{\bf Exercise.} Prove the following lemma.
\begin{lemma}
  \label{lemma:estimation-converges}
  Let $\cK_1 \subseteq \cK_2, \ldots$ be a filtration of
  $\sigma$-algebras, and let $\cK_\infty = \cup_t\cK_t$. Let $K$ be a
  random variable that takes a finite number of values and is
  measurable in $\cK_\infty$. Let $M(t)=\argmax_k\CondP{K =
    k}{\cK(t)}$ be the MAP estimator of $K$ given $\cK_t$. Then
  \begin{align*}
    \P{\lim_{t \to \infty} M(t)=K} = 1.
  \end{align*}
\end{lemma}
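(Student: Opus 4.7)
The plan is to reduce this to the standard Lévy upward convergence theorem for Doob martingales. Enumerate the finitely many values of $K$ as $k_1,\ldots,k_m$, and for each $j$ consider the process
\[
 X_j(t) = \CondP{K=k_j}{\cK_t}.
\]
Each $X_j$ is a bounded martingale adapted to the filtration $\{\cK_t\}$, so by Doob's martingale convergence theorem it converges almost surely as $t \to \infty$. The key input is Lévy's upward theorem, which identifies the limit as the conditional expectation with respect to the tail $\sigma$-algebra:
\[
 \lim_{t \to \infty} X_j(t) = \CondP{K=k_j}{\cK_\infty} \quad \text{a.s.}
\]

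Because $K$ is by hypothesis $\cK_\infty$-measurable, $\CondP{K=k_j}{\cK_\infty} = \ind{K=k_j}$ almost surely. Thus on the event $\{K=k_j\}$ we have $X_j(t) \to 1$ and $X_i(t) \to 0$ for every $i \neq j$, all holding outside a single $\mathbb{P}$-null set (obtained as the union of the $m$ null sets coming from the $m$ martingales). In particular, for almost every $\omega$ there exists a (random) time $T(\omega)$ such that for all $t \geq T(\omega)$, $X_{j(\omega)}(t) > 1/2 > X_i(t)$ for every $i \neq j(\omega)$, where $j(\omega)$ is the unique index with $K(\omega) = k_{j(\omega)}$.

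By the definition of $M(t)$ as the argmax of $X_1(t),\ldots,X_m(t)$, this yields $M(t) = k_{j(\omega)} = K(\omega)$ for all $t \geq T(\omega)$, almost surely. This is precisely the statement $\P{\lim_t M(t) = K} = 1$. I do not anticipate serious obstacles; the only delicate point is a tie-breaking convention when two $X_j(t)$ coincide, but this is irrelevant for large $t$ since the limits $0$ and $1$ are separated, so for $t$ beyond $T(\omega)$ the argmax is unambiguous. The application to Lemma~\ref{lemma:L-estimate} is immediate by taking $\cK_t = \sigma(\info^i_t)$ and $K = \action_\infty$, which is measurable in $\cK_\infty$ because it is a function of all actions, hence of all private signals in the connected component of $i$ (which by strong connectedness is everything).
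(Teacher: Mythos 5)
Your proof is correct and follows essentially the same route as the paper's: for each value $k$ the process $\CondP{K=k}{\cK_t}$ is a bounded martingale converging almost surely to $\CondP{K=k}{\cK_\infty}=\ind{K=k}$, and the argmax therefore stabilizes at $K$. The extra care you take with the union of the finitely many null sets and the eventual unambiguity of the argmax only makes explicit what the paper leaves implicit.
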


We would like at this point to provide the reader with some more
intuition on $\action^i_t$, $\estL^i_t$ and the difference between
them. Assuming that $\action_\infty = 1$ then by definition, from some
time $t_0$ on, $\action^i_t=1$, and from Lemma~\ref{lemma:L-estimate},
$\estL^i_t=1$. The same applies when $\action_\infty = 0$. However,
when $\action_\infty = \{0,1\}$ then $\action^i_t$ takes both values 0
and 1 infinitely often, but $\estL^i_t$ will eventually equal
$\{0,1\}$. That is, agent $i$ will realize at some point that,
although it thinks at the moment that 1 is preferable to 0 (for
example), it is in fact the most likely outcome that its belief will
converge to $1/2$. In this case, although it is not optimal, a {\em
  uniformly random} guess of which is the best action may not be so
bad. Our next definition is based on this observation.

Based on $\estL^i_t$, we define a second ``action'' $C^i_t$.  Let
$C^i_t$ be picked uniformly from $\estL^i_t$: if $\estL^i_t = 1$ then
$C^i_t = 1$, if $\estL^i_t = 0$ then $C^i_t = 0$, and if $\estL^i_t =
\{0,1\}$ then $C^i_t$ is picked independently from the uniform
distribution over $\{0,1\}$.

Note that we here extend our probability space by including in
$I^i_t$ (the observations of agent $i$ up to time $t$) an extra
uniform bit that is independent of all else and $S$ in
particular. Hence this does not increase $i$'s ability to estimate
$S$, and if we can show that in this setting $i$ learns $S$ then $i$
can also learn $S$ without this bit.  In fact, we show that
asymptotically it is as good an estimate for $S$ as the best estimate
$\action^i_t$:
\begin{claim}
  \label{claim:b-a-equiv}
  \begin{align*}
   \lim_{t \to \infty} \P{C^i_t=S} = \lim_{t \to \infty} \P{\action^i_t=S}
  = p 
  \end{align*}
  for all $i$.
\end{claim}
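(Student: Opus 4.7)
The plan is to show (i) that the limit of $\P{\action^i_t = S}$ exists and is the same for all agents, (ii) that this limit dominates $\P{C^i_t = S}$ for trivial reasons, and (iii) that the two limits actually coincide by splitting on the event of asymptotic indifference.

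First I will verify that the limit $p := \lim_t \P{\action^i_t = S}$ exists and is independent of $i$. Since $\action^i_t$ is the MAP estimator of $S$ given $\info^i_t$ and the filtration $\sigma(\info^i_t)$ is increasing, the sequence $\P{\action^i_t = S} = \E{\max\{\belief^i_t, 1-\belief^i_t\}}$ is non-decreasing, hence convergent. That the limit is common to all $i$ follows from Corollary~\ref{cor:equal-utils} applied to the discrete utility $u(S,a) = \ind{a = S}$, together with strong connectedness of $G$. Since $C^i_t$ is a (possibly randomised) $\sigma(\info^i_t)$-measurable estimator of $S$, optimality of the MAP gives $\P{C^i_t = S} \leq \P{\action^i_t = S}$, so $\limsup_t \P{C^i_t = S} \leq p$.

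Next I split the probability along $E = \{\action_\infty = \{0,1\}\}$ and $E^c$. On $E^c$, the limit action set is the singleton $\action_\infty \in \{0,1\}$, so by definition $\action^i_t \to \action_\infty$, and by Lemma~\ref{lemma:L-estimate} $\estL^i_t \to \action_\infty$, whence $C^i_t = \estL^i_t = \action^i_t$ for all sufficiently large $t$. Dominated convergence then yields
\[
\lim_t \P{\action^i_t = S,\, E^c} = \P{\action_\infty = S,\, E^c} = \lim_t \P{C^i_t = S,\, E^c}.
\]
On $E$, Claim~\ref{thm:a-infty-opt} gives asymptotic indifference, i.e., $\CondE{\ind{0=S}}{\info^i_\infty} = \CondE{\ind{1=S}}{\info^i_\infty}$, which forces $\belief^i_\infty = \CondP{S=1}{\info^i_\infty} = 1/2$ almost surely on $E$ (note that $E$ is $\sigma(\info^i_\infty)$-measurable by the agreement assumption). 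Since $\belief^i_t \to \belief^i_\infty$ by martingale convergence, dominated convergence gives
\[
\lim_t \P{\action^i_t = S,\, E} = \E{\ind{E}\,\max\{\belief^i_\infty,1-\belief^i_\infty\}} = \tfrac12 \P{E}.
\]
For $C^i_t$ on $E$, Lemma~\ref{lemma:L-estimate} ensures that eventually $\estL^i_t = \{0,1\}$, so $C^i_t$ becomes an independent fair coin flip; dominated convergence again gives $\lim_t \P{C^i_t = S,\, E} = \tfrac12 \P{E}$.

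Summing the two pieces yields $\lim_t \P{\action^i_t = S} = \lim_t \P{C^i_t = S} = \P{\action_\infty = S, E^c} + \tfrac12 \P{E} = p$, completing the proof. I expect the main subtlety to be justifying that $\belief^i_\infty = 1/2$ almost surely on the indifference event $E$ — this requires tying together Claim~\ref{thm:a-infty-opt} with the specific form of the $0$--$1$ utility and the agreement assumption that makes $E$ measurable in $\sigma(\info^i_\infty)$. Everything else is a routine dominated convergence argument combined with the monotonicity of the MAP estimator.
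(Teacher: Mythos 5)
Your proof is correct and follows essentially the same route as the paper's: both split on the event that the limit action set is $\{0,1\}$ versus a singleton, use Lemma~\ref{lemma:L-estimate} to conclude that $C^i_t$ eventually coincides with $\action^i_t$ off the indifference event, and show that on the indifference event $\belief^i_\infty = 1/2$ so that both estimators succeed with asymptotic probability $\half$. Your additional remarks (monotonicity giving existence of $p$, the MAP domination bound, and deriving $\belief^i_\infty=1/2$ from Claim~\ref{thm:a-infty-opt}) are correct refinements of steps the paper treats more tersely, not a different argument.
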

{\bf Exercise.} Prove Claim~\ref{claim:b-a-equiv}.

\subsubsection{The probability of getting it right}
Recall that $p^i_t = \P{\action^i_t=S}$ and $p^i_\infty = \lim_{t \to
  \infty}p^i_t$ (i.e., $p^i_t$ is the probability that agent $i$ takes
the right action at time $t$). We state here a few easy related claims
that will later be useful to us. The next claim is a rephrasing of the
first part of Claim~\ref{eq:utils-monotone}.
\begin{claim}
  \label{clm:pMonotone}
  $p^i_{t+1} \geq p^i_t$.
\end{claim}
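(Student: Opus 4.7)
The plan is to observe that $p^i_t = \P{\action^i_t = S} = \E{\ind{\action^i_t = S}} = \E{\util^i_t}$ under the discrete utility function $\util^i_t = \ind{\action^i_t = S}$. The claim therefore is exactly the first part of Claim~\ref{eq:utils-monotone} applied to this particular utility function, so in principle one sentence suffices. Still, it is worth spelling out the underlying one-line argument since it is the cleanest derivation.

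First I would note that $\action^i_t$, being a deterministic function of $\info^i_t$, is $\sigma(\info^i_{t+1})$-measurable because $\sigma(\info^i_t) \subseteq \sigma(\info^i_{t+1})$ (the information available to $i$ at time $t{+}1$ strictly contains that available at time $t$). Hence $\action^i_t$ is one candidate estimator inside the feasible set over which $\action^i_{t+1}$ is defined via Eq.~\ref{eq:action-max-util2}.

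Next I would use the pointwise optimality of the MAP estimator: since $\action^i_{t+1} = \argmax_{s \in \{0,1\}} \CondP{S=s}{\info^i_{t+1}}$, we have the almost-sure inequality
\[
\CondP{S=\action^i_{t+1}}{\info^i_{t+1}} \geq \CondP{S=\action^i_t}{\info^i_{t+1}}.
\]
Taking expectations of both sides and using the tower property yields
\[
p^i_{t+1} = \E{\CondP{S=\action^i_{t+1}}{\info^i_{t+1}}} \geq \E{\CondP{S=\action^i_t}{\info^i_{t+1}}} = \P{\action^i_t = S} = p^i_t.
\]

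There is no real obstacle here; the only subtlety worth mentioning is to be precise that measurability of $\action^i_t$ with respect to the larger sigma-algebra is what permits it to serve as a feasible competitor inside the argmax defining $\action^i_{t+1}$. Everything else is the standard fact that conditioning on more information can only (weakly) improve the success probability of a Bayes-optimal binary estimator.
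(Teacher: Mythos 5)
Your proposal is correct and matches the paper exactly: the paper also notes that this claim is simply a rephrasing of the first part of Claim~\ref{eq:utils-monotone} (since $p^i_t = \E{\util^i_t}$ for the discrete utility), whose proof is the same nested-sigma-algebra argument that $\action^i_t$ remains a feasible competitor in the maximization defining $\action^i_{t+1}$. Your explicit tower-property computation is a slightly more detailed writeup of the same one-line idea.
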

The following claim is a rephrasing of
Corollary~\ref{cor:equal-utils}.
\begin{claim}
  \label{clm:pG}
  There exists a $p \in [0,1]$ such that $p^i_\infty=p$ for all $i$.
\end{claim}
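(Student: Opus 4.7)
The plan is to derive this claim as an immediate specialization of Corollary~\ref{cor:equal-utils} (equivalently, a direct consequence of Claim~\ref{eq:utils-monotone}) to the discrete utility function $\util^i_t = \ind{\action^i_t = S}$. Under this choice of utility we have $\E{\util^i_t} = \P{\action^i_t = S} = p^i_t$, and by Claim~\ref{clm:pMonotone} the sequence $\{p^i_t\}_t$ is monotone non-decreasing and bounded above by $1$, so $p^i_\infty := \lim_t p^i_t$ exists. It then remains only to verify that this limit is independent of $i$.

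For adjacent agents $(i,j) \in E$, Claim~\ref{eq:utils-monotone}(2) gives $\E{\util^i_{t+1}} \geq \E{\util^j_t}$, i.e. $p^i_{t+1} \geq p^j_t$. Taking $t \to \infty$ yields $p^i_\infty \geq p^j_\infty$. Since the social network in this section is undirected, swapping the roles of $i$ and $j$ gives the reverse inequality, so $p^i_\infty = p^j_\infty$ whenever $(i,j) \in E$. For arbitrary $i,j \in V$, strong connectivity (guaranteed here by connectedness of the undirected graph $G$) lets us chain this equality along a path from $i$ to $j$, yielding $p^i_\infty = p^j_\infty$. Hence there is a single value $p \in [0,1]$ with $p^i_\infty = p$ for all $i \in V$.

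There is essentially no obstacle: the monotonicity bit and the inequality between neighbors were already established in Claim~\ref{eq:utils-monotone}, and the rest is a two-line unpacking. The only thing worth being careful about is making sure the specialization of Corollary~\ref{cor:equal-utils} to our setting is legitimate — i.e., that the utility function $u(S,a) = \ind{a = S}$ with actions in $\{0,1\}$ is admissible under the corollary's hypotheses. It is, since on the finite set $\{0,1\} \times \{0,1\}$ every function is trivially continuous, and the corollary's derivation used nothing about $u$ beyond the two monotonicity statements of Claim~\ref{eq:utils-monotone}.
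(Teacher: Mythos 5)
Your proof is correct and matches the paper's approach exactly: the paper simply declares Claim~\ref{clm:pG} to be a rephrasing of Corollary~\ref{cor:equal-utils} (whose proof is left as an exercise), and your argument is precisely the intended unpacking --- specialize to $u(S,a)=\ind{a=S}$ so that $\E{\util^i_t}=p^i_t$, get existence of the limit from monotonicity, and get independence of $i$ from the neighbor inequality $p^i_{t+1}\geq p^j_t$ chained along paths. The only cosmetic remark is that undirectedness is not needed for the final step: strong connectivity alone lets you chain the inequality around a directed cycle through $i$ and $j$ to force equality.
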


We make the following definition in the spirit of these claims:
\begin{align*}
  p = \lim_{t \to \infty} \P{\action^i_t=S}.  
\end{align*}
In the context of a specific social network graph $G$ we may denote
this quantity as $p(G)$.

For time $t=1$ the next standard claim follows from the fact that the
agents' signals are informative.
\begin{claim}
  \label{clm:pGreaterThanHalf}
  $p^i_t>1/2$ for all $i$ and $t$.
\end{claim}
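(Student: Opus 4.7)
The plan is to reduce the claim to the base case $t=0$ via monotonicity and then establish the base case by a direct Neyman--Pearson style computation. First, Claim~\ref{clm:pMonotone} gives $p^i_t \geq p^i_{t-1} \geq \cdots \geq p^i_0$ for every $i$, so the entire statement follows at once if we can show $p^i_0 > 1/2$.

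For the base case, agent $i$ at time $0$ has only $\psignal_i$ in hand and chooses $\action^i_0 = \argmax_{s}\CondP{S=s}{\psignal_i}$. Writing $\belief^i_0 = \CondP{S=1}{\psignal_i}$ and using $\P{S=0}=\P{S=1}=1/2$, a one-line calculation gives
\begin{align*}
p^i_0 \;=\; \E{\max(\belief^i_0,\, 1-\belief^i_0)} \;=\; \tfrac{1}{2} + \E{\bigl|\belief^i_0 - \tfrac{1}{2}\bigr|}.
\end{align*}
So the whole task reduces to showing that $\belief^i_0$ is not almost surely equal to $1/2$.

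This last point is where the slightly slippery ``informativeness'' hypothesis enters and is the only genuine obstacle. The general framework only assumed that $\mu_0$ and $\mu_1$ are mutually absolutely continuous, not that they differ; the claim implicitly invokes the additional assumption $\mu_0 \neq \mu_1$ (``the agents' signals are informative''). Under that assumption, $\belief^i_0 \equiv 1/2$ $\P$-a.s.\ would force $\tfrac{d\mu_1}{d\mu_0} \equiv 1$ on a set of full $\mu_0$-measure, contradicting $\mu_0 \neq \mu_1$. Hence $\E{|\belief^i_0 - 1/2|} > 0$, giving $p^i_0 > 1/2$ strictly, and then monotonicity finishes the job. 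The monotonicity step is essentially free from Claim~\ref{clm:pMonotone}, the computation is a single identity, and the only thing one really has to flag is the use of the informativeness hypothesis.
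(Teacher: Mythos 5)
Your proposal is correct and follows essentially the same route as the paper: reduce to $t=0$ via Claim~\ref{clm:pMonotone}, then compute $p^i_0 = \E{\max\{\belief^i_0,1-\belief^i_0\}} = \half + \E{|\belief^i_0-\half|}$ and invoke informativeness. The paper identifies $2\,\E{|\belief^i_0-\half|}$ as $\dtv{\mu_0}{\mu_1}$ and concludes from $\mu_0\neq\mu_1$, which is the same positivity fact you establish via the Radon--Nikodym derivative; your explicit flagging of the implicit hypothesis $\mu_0\neq\mu_1$ is a fair observation but not a departure.
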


Recall that $|\neigh{i}|$ is the out-degree of $i$, or the number of
neighbors that $i$ observes. The next lemma states that an agent with
many neighbors will have a good estimate of $S$ already at the second
round, after observing the first action of its neighbors. 
\begin{lemma}
  \label{thm:large-out-deg}
  There exist constants $C_1=C_1(\mu_0,\mu_1)$ and
  $C_2=C_2(\mu_0,\mu_1)$ such that for any agent $i$ it holds that
  \begin{align*}
    p^i_1 \geq 1-C_1 e^{-C_2 \cdot |\neigh{i}|}.
  \end{align*}
\end{lemma}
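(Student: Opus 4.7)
The plan is to bound $p^i_1$ from below by the success probability of a simple estimator of $S$ that uses only the time-$0$ actions of $i$'s neighbors, and then to invoke a Chernoff/Hoeffding-type concentration bound.

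First I would observe that at time $t=1$, the information $\info^i_1$ contains $\psignal_i$ together with $\{\action^j_0 : j \in \neigh{i}\}$. Since $\action^i_1$ is, by definition, the $\sigma(\info^i_1)$-measurable estimator of $S$ with highest success probability, $p^i_1$ is at least the success probability of any other estimator that is a function of these observations. Hence it suffices to exhibit an estimator depending only on $\{\action^j_0 : j \in \neigh{i}\}$ whose error probability is at most $C_1 e^{-C_2 |\neigh{i}|}$.

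Next I would exploit the fact that the private signals $\psignal_j$ are i.i.d.\ conditional on $S$, and that $\action^j_0$ is a deterministic function of $\psignal_j$. Thus conditional on $S$, the bits $\{\action^j_0 : j \in \neigh{i}\}$ are i.i.d.\ Bernoulli variables with $\CondP{\action^j_0=1}{S=s} =: a_s$. The computation in Claim~\ref{clm:pGreaterThanHalf} (which identifies $\P{\action^j_0=S}-\tfrac12$ with $\tfrac12 D_{TV}(\mu_0,\mu_1)$) together with $\P{\action^j_0 = S} = \tfrac12(a_1 + 1 - a_0)$ shows that $\Delta := a_1 - a_0 = D_{TV}(\mu_0,\mu_1) > 0$, since $\mu_0 \neq \mu_1$.

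Finally I would apply Hoeffding's inequality to the estimator $\hat S = \mathbbm{1}\{\bar X \geq (a_0+a_1)/2\}$, where $\bar X = |\neigh{i}|^{-1} \sum_{j \in \neigh{i}} \action^j_0$. Under either value of $S$, the relevant deviation from the conditional mean is at least $\Delta/2$, so
\begin{align*}
\CondP{\hat S \neq S}{S = s} \;\leq\; \exp\!\left(-\tfrac{1}{2}|\neigh{i}|\,\Delta^2\right),
\end{align*}
which yields the conclusion with $C_1 = 1$ and $C_2 = \tfrac{1}{2}D_{TV}(\mu_0,\mu_1)^2$. There is no real obstacle: the only subtlety is noting that $p^i_1$ dominates the success probability of the sub-estimator that ignores $\psignal_i$ and all other coordinates of $\info^i_1$ beyond the neighbors' initial actions, which is immediate from the MAP characterization \eqref{eq:action-max-util2}.
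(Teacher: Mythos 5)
Your proof is correct and follows essentially the same route as the paper's: both reduce $p^i_1$ to the success probability of an estimator built from the conditionally i.i.d.\ initial actions of the neighbors, whose informativeness is guaranteed by Claim~\ref{clm:pGreaterThanHalf}. The only difference is that the paper cites standard exponential error bounds for hypothesis testing with $n$ i.i.d.\ informative signals, whereas you make the estimator explicit (thresholding the empirical mean at $(a_0+a_1)/2$) and apply Hoeffding directly, which yields the same conclusion with explicit constants.
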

Intuitively, this follows from the fact that $i$'s neighbors will
provide him with $|\neigh{i}|$ independent signals. We leave the proof
as an exercise.

The following claim is a direct consequence of the previous lemmas of
this section.
\begin{claim}
  \label{thm:large-out-deg-graph}
  Let $d(G) = \sup\{|\neigh{i}|\}$ be the out-degree of the graph $G$;
  note that for infinite graphs it may be that $d(G)=\infty$. Then
  there exist constants $C_1=C_1(\mu_0,\mu_1)$ and
  $C_2=C_2(\mu_0,\mu_1)$ such that
  \begin{align*}
    p(G) \geq 1-C_1 e^{-C_2 \cdot d(G)}.
  \end{align*}
\end{claim}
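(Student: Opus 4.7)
The plan is to derive the bound directly by chaining three previously established facts: monotonicity of $p^i_t$ in $t$ (Claim~\ref{clm:pMonotone}), equality of the limits across agents (Claim~\ref{clm:pG}), and the single-step estimate from observing many neighbors (Lemma~\ref{thm:large-out-deg}). The content of the claim is really just the observation that $p(G)$ is at least as good as the probability that \emph{any one} high-degree agent correctly guesses $S$ after a single round.

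First I would fix constants $C_1, C_2$ as provided by Lemma~\ref{thm:large-out-deg}. By Claim~\ref{clm:pG}, $p(G) = p^j_\infty$ for every agent $j$, and by Claim~\ref{clm:pMonotone} the sequence $p^j_t$ is non-decreasing, so
\begin{align*}
  p(G) \;=\; p^j_\infty \;\geq\; p^j_1 \;\geq\; 1 - C_1 e^{-C_2 \cdot |\neigh{j}|}
\end{align*}
for every agent $j \in V$. Thus
\begin{align*}
  p(G) \;\geq\; \sup_{j \in V}\bigl(1 - C_1 e^{-C_2 \cdot |\neigh{j}|}\bigr) \;=\; 1 - C_1 \inf_{j \in V} e^{-C_2 \cdot |\neigh{j}|}.
\end{align*}

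The remaining step is to relate this supremum to $d(G) = \sup_j |\neigh{j}|$. Since out-degrees are nonnegative integers, if $d(G) < \infty$ then the supremum is attained by some agent $j^*$ with $|\neigh{j^*}| = d(G)$, giving $p(G) \geq 1 - C_1 e^{-C_2 d(G)}$ directly. If $d(G) = \infty$, then for every integer $N$ there exists an agent $j_N$ with $|\neigh{j_N}| \geq N$, yielding $p(G) \geq 1 - C_1 e^{-C_2 N}$ for all $N$, whence $p(G) = 1$; this is consistent with the convention $e^{-\infty} = 0$ in the stated bound. There is no real obstacle here: the only subtlety is the infinite-degree case, which is handled by the same monotonicity argument taken to the limit.
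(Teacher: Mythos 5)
Your proposal is correct and follows essentially the same route as the paper: apply Lemma~\ref{thm:large-out-deg} to get $p^j_1 \geq 1 - C_1 e^{-C_2 |\neigh{j}|}$, pass to the limit via monotonicity (Claim~\ref{clm:pMonotone}) and the common limit $p(G) = p^j_\infty$ (Claim~\ref{clm:pG}), and take the supremum over agents. Your explicit handling of whether the supremum of the out-degrees is attained (and the $d(G)=\infty$ case) is slightly more careful than the paper's one-line conclusion, but it is the same argument.
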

\begin{proof}
  Let $i$ be an arbitrary vertex in $G$. Then by
  Lemma~\ref{thm:large-out-deg} it holds that
  \begin{align*}
    p^i_1 \geq 1-C_1 e^{-C_2 \cdot \neigh{i}},
  \end{align*}
  for some constants $C_1$ and $C_2$. By Lemma~\ref{clm:pMonotone} we
  have that $p^i_{t+1} \geq p^i_t$, and therefore
  \begin{align*}
    p^i_\infty  = \lim_{n \to \infty}p^i_t \geq 1-C_1 e^{-C_2 \cdot \neigh{i}}.
  \end{align*}
  Finally, $p(G) = p^i_\infty$ by Lemma~\ref{clm:pG}, and so
  \begin{align*}
    p^i_\infty  \geq 1-C_1 e^{-C_2 \cdot \neigh{i}}.
  \end{align*}
  Since this holds for an arbitrary vertex $i$, the claim follows.
\end{proof}

\subsubsection{Local limits and pessimal graphs}
We now turn to apply local limits to our process. We consider here and
henceforth the same model as applied, with the same private signals,
to different graphs. We write $p(G)$ for the value of $p$ on the
process on $G$, $\action_\infty(G)$ for the value of $\action_\infty$
on $G$, etc.

\begin{lemma}
  \label{lemma:limit-leq-p}
  Let $(G,i) = \lim_{r \to \infty}(G_r,i_r)$. Then $p(G) \leq \lim
  \inf_r p(G_r)$.
\end{lemma}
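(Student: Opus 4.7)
The plan is to combine the isomorphism coupling from Corollary~\ref{cor:p-iso} with the monotonicity of $t \mapsto p^i_t$ from Claim~\ref{clm:pMonotone}, and then take limits in the right order. The key idea is that $p(G)$ is an increasing limit in $t$, so to bound it from above by $\liminf_r p(G_r)$ it suffices to bound each finite-time value $p^i_t(G)$, which only depends on a bounded-radius ball.

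First, fix an arbitrary $t \geq 1$. Since $(G,i) = \lim_{r\to\infty}(G_r,i_r)$ in the local topology, by definition there exists $R(t)$ such that for every $r \geq R(t)$ we have $B_t(G_r,i_r) \cong B_t(G,i)$ as rooted graphs. By Corollary~\ref{cor:p-iso}, this implies
\[
p^{i_r}_t(G_r) \;=\; p^{i}_t(G) \qquad \text{for all } r \geq R(t).
\]
By Claim~\ref{clm:pMonotone}, $p^{i_r}_{t'}(G_r)$ is nondecreasing in $t'$, so $p(G_r) = \lim_{t'\to\infty} p^{i_r}_{t'}(G_r) \geq p^{i_r}_t(G_r)$. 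Combining, $p(G_r) \geq p^{i}_t(G)$ for all $r \geq R(t)$, hence
\[
\liminf_{r\to\infty} p(G_r) \;\geq\; p^{i}_t(G).
\]

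Finally, let $t \to \infty$. Since $p^{i}_t(G) \nearrow p(G)$ (monotonicity plus the definition $p(G) = \lim_t p^i_t$ from Claim~\ref{clm:pG}), we conclude
\[
\liminf_{r\to\infty} p(G_r) \;\geq\; \lim_{t\to\infty} p^{i}_t(G) \;=\; p(G),
\]
which is the desired inequality. There is no real obstacle here: the only thing one must check is that Corollary~\ref{cor:p-iso} really gives an equality of the probabilities $p^i_t$ (not just a distributional coupling of actions), which follows from Lemma~\ref{lemma:p-iso} applied to the estimator $\action^i_t$. Note also that the reverse inequality need not hold in general, because local limits can destroy long-range information-aggregation that $G_r$ enjoys for each finite $r$.
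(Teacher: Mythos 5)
Your proof is correct and follows essentially the same route as the paper's: both combine Corollary~\ref{cor:p-iso} (isomorphic balls give equal $p^i_t$) with the monotonicity of Claim~\ref{clm:pMonotone} and then pass to the limit. The only cosmetic difference is that the paper takes the diagonal $t=r$ directly (using that $B_r(G,i)\cong B_r(G_r,i_r)$ for all $r$ by the definition of the local limit), whereas you fix $t$ first and let $r\to\infty$ before sending $t\to\infty$; the two orderings yield the same conclusion.
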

\begin{proof}
  Since $B_r(G_r,i_r) \cong B_r(G,i)$, by Lemma~\ref{cor:p-iso} we
  have that $p^i{r} = p^{i_r}_r$. By Claim~\ref{clm:pMonotone}
  $p^{i_r}_r \leq p(G_r)$, and therefore $p^i_r \leq p(G_r)$. The
  claim follows by taking the limit inferior of both sides.
\end{proof}

Recall that $\InfGraphs_d$ denotes the set of infinite, connected,
undirected graphs of degree at most $d$. Let
\begin{align*}
  \InfGraphs = \bigcup_d \InfGraphs_d.
\end{align*}
Let
\begin{align*}
  p^* = p^*(\mu_0,\mu_1) = \inf_{G \in \InfGraphs}p(G)
\end{align*}
be the probability of learning in the pessimal graph.

Note that by Claim~\ref{clm:pGreaterThanHalf} we have that $p^*>1/2$.
We show that this infimum is in fact attained by some graph:
\begin{lemma}
  \label{lemma:h-exists}
  There exists a graph $H \in \InfGraphs$ such that $p(H)=p^*$.
\end{lemma}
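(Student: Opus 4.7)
The plan is to take a minimizing sequence $\{G_n\} \subset \InfGraphs$ with $p(G_n) \to p^*$, argue that after passing to a subsequence the degrees are uniformly bounded, and then extract a local limit using the compactness provided by Lemma~\ref{lemma:inf-graphs-closed}. The limit $H$ will lie in $\InfGraphs$, and Lemma~\ref{lemma:limit-leq-p} will force $p(H) \leq p^*$, matching the trivial lower bound $p(H) \geq p^*$.

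First I would handle the trivial case $p^* = 1$ separately: by definition of the infimum $p(G) = 1$ for every $G \in \InfGraphs$, so any graph serves as $H$. Assume therefore that $p^* < 1$. The key point is that the minimizing sequence has uniformly bounded degree. Indeed, by Claim~\ref{thm:large-out-deg-graph} there are constants $C_1, C_2 > 0$ with
\begin{align*}
p(G) \geq 1 - C_1 e^{-C_2 d(G)}
\end{align*}
for every $G$. After discarding finitely many terms we may assume $p(G_n) \leq \frac{1 + p^*}{2} < 1$, which forces $d(G_n) \leq d^*$ for some finite $d^* = d^*(p^*, \mu_0, \mu_1)$. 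Hence $G_n \in \InfGraphs_{d^*}$ for all $n$.

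Next I would pick, for each $n$, an arbitrary root $i_n \in G_n$ (by Claim~\ref{clm:pG}, the value $p^j_\infty$ is independent of the choice of vertex $j \in G_n$, so any selection works). The sequence $\{(G_n, i_n)\}$ lies in $\InfGraphs_{d^*}^r$, which is compact by Lemma~\ref{lemma:inf-graphs-closed}. Extracting a convergent subsequence yields a local limit
\begin{align*}
(H, i) = \lim_{k \to \infty} (G_{n_k}, i_{n_k}) \in \InfGraphs_{d^*}^r,
\end{align*}
so in particular $H \in \InfGraphs_{d^*} \subseteq \InfGraphs$.

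Finally, Lemma~\ref{lemma:limit-leq-p} gives $p(H) \leq \liminf_k p(G_{n_k}) = p^*$, while $H \in \InfGraphs$ gives $p(H) \geq p^*$ by definition of the infimum, so $p(H) = p^*$ as required. There is no real obstacle here beyond remembering that compactness of $\InfGraphs_d^r$ requires a uniform degree bound; the role of Claim~\ref{thm:large-out-deg-graph} is precisely to rule out degree blow-up along the minimizing sequence, turning what would otherwise be a merely infimal value into an attained minimum.
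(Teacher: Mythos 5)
Your proof is correct and follows essentially the same route as the paper: take a minimizing sequence, use the exponential bound of Claim~\ref{thm:large-out-deg-graph} to force a uniform degree bound (since otherwise $p(G_n)$ would approach $1$), extract a local limit by compactness of $\InfGraphs_d^r$, and conclude with Lemma~\ref{lemma:limit-leq-p} plus the infimum lower bound. The only cosmetic difference is that you dispose of the $p^*=1$ case up front, whereas the paper notes it parenthetically.
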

\begin{proof}
  Let $\{G_r = (V_r,E_r)\}_{r=1}^{\infty}$ be a series of graphs in
  $\InfGraphs$ such that $\lim_{r \to \infty} p(G_r) = p^*$. Note that
  $\{G_r\}$ must all be in $\InfGraphs_d$ for some $d$ (i.e., have
  uniformly bounded degrees), since otherwise the sequence $p(G_r)$
  would have values arbitrarily close to $1$ and its limit could not
  be $p^*$ (unless indeed $p^* = 1$, in which case our main
  Theorem~\ref{thm:bounded-learning} is proved). This follows from
  Lemma~\ref{thm:large-out-deg}.

  We now arbitrarily mark a vertex $i_r$ in each graph, so that $i_r
  \in V_r$, and let $(H,i)$ be the limit of some
  subsequence of $\{G_r,i_r\}_{r=1}^\infty$. Since $\InfGraphs_d$ is
  compact (Lemma~\ref{lemma:inf-graphs-closed}), $(H,i)$ is guaranteed
  to exist, and $H \in \InfGraphs_d$.

  By Lemma~\ref{lemma:limit-leq-p} we have that $p(H) \leq \liminf_r
  p(G_r) = p^*$. But since $H \in \InfGraphs$, $p(H)$ cannot be less
  than $p^*$, and the claim is proved.
\end{proof}

\subsubsection{Independent bits}
We now show that on infinite graphs, the private signals in the
neighborhood of agents that are ``far enough away'' are (conditioned
on $S$) almost independent of $\action_\infty$ (the final consensus estimate
of $S$).

\begin{lemma}
  \label{lemma:r-independent}
  Let $G$ be an infinite graph. Fix a vertex $i_0$ in $G$.  Then for
  every $\delta>0$ there exists an $r_\delta$ such that for every
  $r\geq r_\delta$ and every vertex $i$ with $d(i_0,i)>2r$ it holds
  that $\psignal(B_r(G,i))$, the private signals in $B_r(G,i)$, are
  $\delta$-independent of $\action_\infty$, conditioned on $S$.
\end{lemma}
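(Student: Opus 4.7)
The plan is to approximate $\action_\infty$ by a random variable that is measurable in the private signals in a ball around the fixed vertex $i_0$, and then exploit the conditionally i.i.d.\ structure of the signals together with disjointness of balls.

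First I would apply Lemma~\ref{lemma:L-estimate} at $i_0$: this gives $\estL^{i_0}_t \to \action_\infty$ almost surely, so by bounded convergence $\P{\estL^{i_0}_t \neq \action_\infty} \to 0$. Pick $r_\delta$ large enough that $\P{\estL^{i_0}_{r_\delta} \neq \action_\infty} < \delta/4$; since $S$ is uniform on $\{0,1\}$, this translates to $\CondP{\estL^{i_0}_{r_\delta} \neq \action_\infty}{S = s} \leq \delta/2$ for both $s \in \{0,1\}$. By Claim~\ref{clm:a-from_cf} together with the definition $\info^{i_0}_{r_\delta} = \{\psignal_{i_0}, I^{i_0}_{r_\delta}\}$, the random variable $\estL^{i_0}_{r_\delta}$ (being the deterministic MAP estimator applied to $\info^{i_0}_{r_\delta}$) is a deterministic function of $\psignal(B_{r_\delta}(G, i_0))$.

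Now fix any $r \geq r_\delta$ and any vertex $i$ with $d(i_0, i) > 2r$. The balls $B_{r_\delta}(G, i_0)$ and $B_r(G, i)$ are disjoint, since any common vertex would give $d(i_0, i) \leq r_\delta + r \leq 2r$, contradicting our assumption. Because the private signals are i.i.d.\ conditioned on $S$, the collections $\psignal(B_{r_\delta}(G, i_0))$ and $\psignal(B_r(G, i))$ are independent conditioned on $S$, and the same holds for the pair $(\estL^{i_0}_{r_\delta}, \psignal(B_r(G, i)))$.

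Finally, I would apply Claim~\ref{clm:delta-independent} under the measure $\P{\cdot \,|\, S = s}$ with $A = \action_\infty$, $B = \estL^{i_0}_{r_\delta}$, $C = \psignal(B_r(G, i))$: we have $\CondP{A \neq B}{S = s} \leq \delta/2$ and $(B, C)$ are independent conditional on $S = s$ (so the $\delta'$ in the claim is $0$), hence $(A, C)$ are $\delta$-independent conditional on $S = s$. This holds for both values of $s$, which gives the lemma. The main technicality is tracking the total variation bounds under the conditional measure $\P{\cdot \,|\, S}$; this is routine but worth doing explicitly, since the paper's definition of $\delta$-independence is phrased in terms of unconditional total variation distance, and one must verify that the claim (and its proof by a coupling and the triangle inequality) transports verbatim to the conditional setting.
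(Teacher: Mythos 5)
Your proposal is correct and follows essentially the same route as the paper: approximate $\action_\infty$ by the estimator $\estL^{i_0}_r$ from Lemma~\ref{lemma:L-estimate}, observe that this estimator is measurable in the signals of a ball around $i_0$ disjoint from $B_r(G,i)$ and hence conditionally independent of $\psignal(B_r(G,i))$, and conclude via Claim~\ref{clm:delta-independent}. Your version is in fact slightly more careful than the paper's on two points it glosses over --- transporting the unconditional probability bound to the conditional measure given $S$, and freezing the estimator at radius $r_\delta$ rather than letting it track $r$ --- but these are refinements of the same argument, not a different one.
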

Here we denote graph distance by $d(\cdot,\cdot)$.
\begin{proof}
  Fix $i_0$, and let $i$ be such that $d(i_0,u) > 2r$. Then
  $B_r(G,i_0)$ and $B_r(G,i)$ are disjoint, and hence independent
  conditioned on $S$. Hence $\estL^{i_0}_r$ is independent of
  $\psignal(B_r(G,i))$, conditioned on $S$.

  Lemma~\ref{lemma:L-estimate} states that $\P{\lim_{r \to
      \infty}\estL^{i_0}_r = \action_\infty} = 1$, and so there
  exists an $r_\delta$ such that for every $r \geq r_\delta$ it holds
  that $\P{\estL^{i_0}_r = \action_\infty} > 1-\half\delta$.

  Recall Claim~\ref{clm:delta-independent}: for any $A,B,C$, if
  $\P{A=B} = 1-\half\delta$ and $B$ is independent of $C$, then
  $(A,C)$ are $\delta$-independent.

  Applying Claim~\ref{clm:delta-independent} to $\action_\infty$,
  $\estL^{i_0}_r$ and $\psignal(B_r(G,i))$ we get that for any $r$
  greater than $r_\delta$ it holds that $\psignal(B_r(G,i))$ is
  $\delta$-independent of $\action_\infty$, conditioned on $S$.
\end{proof}

We will now show, in the lemmas below, that in infinite graphs each
agent has access to any number of ``good estimators'':
$\delta$-independent measurements of $S$ that are each almost as
likely to equal $S$ as $p^*$, the minimal probability of estimating
$S$ on any infinite graph.

We say that agent $i \in G$ has {\bf $k$ $(\delta,\eps)$-good
  estimators} if there exists a time $t$ and estimators
$M_1,\ldots,M_k$ such that $(M_1,\ldots,M_k) \in \info^i_t$ and
\begin{enumerate}
\item $\P{M_i = S} > p^* - \eps$ for $1 \leq i \leq k$.
\item $(M_1, \ldots, M_k)$ are $\delta$-independent, conditioned on
  $S$.
\end{enumerate}

The proof of the next claim is straightforward.
\begin{claim}
  \label{clm:good-est-local}
  Let $P$ denote the property of having $k$ $(\delta,\eps)$-good
  estimators. Then $P$ is a {\em local property} of the rooted graph
  $(G,i)$. Furthermore, if $u \in G$ has $k$ $(\delta,\eps)$-good
  estimators measurable in $\info^i_t$ then $(G,i) \in P^{(t)}$, i.e.,
  $(G,i)$ has property $P$ with radius $t$.
\end{claim}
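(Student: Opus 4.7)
The plan is to reduce everything to the coupling statement of Lemma~\ref{lemma:p-iso} together with Claim~\ref{clm:a-from_cf}. Recall that Claim~\ref{clm:a-from_cf} shows $I^i_t$ is a deterministic function of $\psignal(B_t(G,i))$; consequently, anything measurable in $\info^i_t$ is a function of $\psignal_i$ together with $\psignal(B_t(G,i))$, i.e., of the private signals inside $B_t(G,i)$. This is what makes the ball of radius $t$ the ``right'' radius for witnessing property $P$.

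Concretely, suppose agent $i$ in $(G,i)$ has $k$ $(\delta,\eps)$-good estimators $M_1,\ldots,M_k$ that are measurable in $\info^i_t$. I would then let $(G',i')$ be any rooted graph with $B_t(G,i) \cong B_t(G',i')$ via an isomorphism $h$, and construct $M'_1,\ldots,M'_k$ in $\info^{i'}_t$ as follows. Each $M_j$ has the form $M_j = f_j(\psignal_i, I^i_t)$ for some measurable function $f_j$, and by Claim~\ref{clm:a-from_cf} we may rewrite it as $M_j = g_j(\psignal(B_t(G,i)))$. Define $M'_j := g_j(\psignal(B_t(G',i')))$ using the vertex identification induced by $h$. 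Then apply the coupling in the proof of Lemma~\ref{lemma:p-iso}: setting $S = S'$ and $\psignal_{h(j)} = \psignal_j$ for all $j \in B_t(G,i)$ produces a joint distribution under which $(M_1,\ldots,M_k,S) = (M'_1,\ldots,M'_k,S')$ pointwise, hence the joint laws coincide.

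From equality of joint laws, both defining properties of a good estimator transfer from $M_\bullet$ to $M'_\bullet$ automatically: first, $\P{M'_j = S'} = \P{M_j = S} > p^* - \eps$ for each $j$; second, the total variation distance between the joint conditional law of $(M'_1,\ldots,M'_k)$ given $S'$ and the product of the marginal conditional laws equals the corresponding quantity for $(M_1,\ldots,M_k)$ given $S$, which is $\le \delta$. Therefore $(M'_1,\ldots,M'_k)$ are $\delta$-independent conditioned on $S'$, so $(G',i')$ also has $k$ $(\delta,\eps)$-good estimators, witnessed in $\info^{i'}_t$. This shows $(G,i) \in P^{(t)}$, and in particular that $P$ is a local property with radius at most $t$.

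I do not anticipate a real obstacle: the entire statement is a bookkeeping consequence of the isomorphism coupling already developed in Section~\ref{sec:isomorphics-balls}. The only subtlety worth being careful about is to write $M_j$ explicitly as a function of $\psignal(B_t(G,i))$ (rather than just of $\info^i_t$) before transporting it across $h$, so that the definition of $M'_j$ is unambiguous and the coupling genuinely identifies the two tuples; once that is done, $\delta$-independence is preserved because it is purely a property of the joint distribution, which the coupling matches exactly.
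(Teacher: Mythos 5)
Your proposal is correct and follows essentially the same route as the paper, which simply invokes Lemma~\ref{lemma:p-iso} (applied to the tuple of estimators) to transport $(M_1,\ldots,M_k)$ across the ball isomorphism and notes that both defining conditions of good estimators depend only on the joint law with $S$. Your extra care in writing each $M_j$ as a function of $\psignal(B_t(G,i))$ via Claim~\ref{clm:a-from_cf} is just an unpacking of the proof of that lemma, not a different argument.
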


We are now ready to prove the main lemma of this subsection:
\begin{lemma}
  \label{thm:independent-bits}
  For every $d \geq 2$, $G \in \InfGraphs_d$, $\eps, \delta > 0$ and
  $k \geq 0$ there exists a vertex $i$, such that $i$ has $k$
  $(\delta,\eps)$-good estimators.
\end{lemma}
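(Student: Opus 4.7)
The plan is to argue by induction on $k$, using the local-property machinery of Lemma~\ref{lemma:local_property} to upgrade a single good vertex into infinitely many well-separated good vertices, and then splicing in one extra estimator that comes from an approximation of $\action_\infty$. For the base case $k=1$, I would take any vertex $i$: since $p^i_t \to p(G) \geq p^*$ by Claim~\ref{clm:pG} and Lemma~\ref{lemma:h-exists}, for $t$ large enough $\action^i_t$ is a single $(0,\eps)$-good estimator, and $\delta$-independence is vacuous for a single variable.

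For the inductive step I will assume that for any $\delta',\eps' > 0$ every graph in $\InfGraphs_d$ has a vertex with $k$ $(\delta',\eps')$-good estimators. By Claim~\ref{clm:good-est-local} this is a local property, so Lemma~\ref{lemma:local_property} (applied to $\InfGraphs_d$, which is compact by Lemma~\ref{lemma:inf-graphs-closed}) yields, inside our given $G$, infinitely many vertices $\{j_n\}$ each admitting $k$ $(\delta/2,\eps)$-good estimators $M_1^{(n)},\ldots,M_k^{(n)} \in \sigma(\info^{j_n}_{r_0})$ for some common local radius $r_0$. I then fix a reference vertex $i_0 \in G$ and let $r_\delta$ come from Lemma~\ref{lemma:r-independent} with independence parameter $\delta/4$. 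Setting $R = \max(r_0,r_\delta)$, I pick $j_n$ with graph distance $d(i_0,j_n) > 2R$ and rename this vertex $i$. Because $M_1,\ldots,M_k$ are measurable in $\psignal(B_R(G,i))$, and because Lemma~\ref{lemma:r-independent} says $\psignal(B_R(G,i))$ is $\delta/4$-independent of $\action_\infty$ given $S$, the data-processing property of total variation makes the tuple $(M_1,\ldots,M_k)$ itself $\delta/4$-independent of $\action_\infty$ given $S$.

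To manufacture the $(k+1)$-st estimator I would take $\estL^i_t$, the MAP estimator of $\action_\infty$ from equation~\eqref{eq:l-i-t}. Lemma~\ref{lemma:L-estimate} gives $\estL^i_t \to \action_\infty$ almost surely, so one can choose $t \geq r_0$ large enough that $\CondP{\estL^i_t \neq \action_\infty}{S=s} < \min(\delta/8,\eps/2)$ for $s \in \{0,1\}$. Applying Claim~\ref{clm:delta-independent} conditionally on $S$ with $A = \estL^i_t$, $B = \action_\infty$, and $C = (M_1,\ldots,M_k)$ shows that $(M_1,\ldots,M_k)$ and $\estL^i_t$ are $\left(\delta/4 + 2\cdot\delta/8\right) = \delta/2$-independent given $S$; combining this with the inductive $\delta/2$-independence of $(M_1,\ldots,M_k)$ via Claim~\ref{clm:delta-ind-additive} produces $\delta$-independence of the full $(k+1)$-tuple given $S$. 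Finally $\P{\estL^i_t = S} \geq \P{\action_\infty = S} - \eps/2 = p(G) - \eps/2 \geq p^* - \eps$, so $\estL^i_t$ is itself $(0,\eps)$-good; since all $k+1$ variables are measurable in $\info^i_t$, this closes the induction.

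The main obstacle is the apparent tension between the two requirements on $\estL^i_t$: it must be measurable in the same $\info^i_t$ as the $M_\ell$'s (which are in turn functions of $\psignal(B_R(G,i))$), yet be almost conditionally independent of them given $S$. My resolution is that $\estL^i_t$ is really a proxy for the global random variable $\action_\infty$, and Lemma~\ref{lemma:r-independent} guarantees that $\action_\infty$ carries almost no information about any ball $B_R(G,i)$ sitting far from the fixed reference vertex $i_0$. Careful parameter bookkeeping through Claims~\ref{clm:delta-independent} and~\ref{clm:delta-ind-additive} then closes the gap, while the existence of the far-away vertex $i$ is exactly what Lemma~\ref{lemma:local_property} supplies.
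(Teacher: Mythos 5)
Your overall architecture is the paper's own: induction on $k$, Claim~\ref{clm:good-est-local} plus Lemma~\ref{lemma:local_property} to produce infinitely many well-separated vertices carrying $k$ good estimators at a common radius, Lemma~\ref{lemma:r-independent} to decouple a far-away ball from $\action_\infty$, and Claims~\ref{clm:delta-independent}, \ref{clm:function-independent}, \ref{clm:delta-ind-additive} for the bookkeeping. That part, including your resolution of the measurability-versus-independence tension, is correct and matches the paper.

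There is, however, a genuine gap in your choice of the $(k+1)$-st estimator. You take $\estL^i_t$, the MAP estimator of $\action_\infty$, and assert $\P{\estL^i_t = S} \geq \P{\action_\infty = S} - \eps/2 = p(G) - \eps/2 \geq p^* - \eps$. But $\action_\infty$ is an action \emph{set} and may equal $\{0,1\}$ with positive probability (this cannot be ruled out at this stage -- proving it has probability zero is essentially the point of Theorem~\ref{thm:bounded}). On the event $\action_\infty=\{0,1\}$ the estimator $\estL^i_t$ eventually equals $\{0,1\}$ and so is never equal to $S$, whereas the agent's actual actions still match $S$ half the time; consequently
\begin{align*}
p(G) = \P{\action_\infty = S,\ \action_\infty \neq \{0,1\}} + \tfrac12\P{\action_\infty = \{0,1\}} > \P{\action_\infty = S}
\end{align*}
whenever $\P{\action_\infty=\{0,1\}}>0$, and your claimed equality $\P{\action_\infty = S} = p(G)$ fails. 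Your estimator then only achieves success probability about $p^* - \tfrac12\P{\action_\infty=\{0,1\}} - \eps/2$, which can fall below $p^*-\eps$. The paper repairs exactly this by replacing $\estL^j_{t}$ with the randomized action $C^j_{t}$, which picks a uniform bit when $\estL^j_t=\{0,1\}$; Claim~\ref{claim:b-a-equiv} then gives $\lim_t\P{C^j_t=S}=p\geq p^*$, and Claim~\ref{clm:function-independent} shows the randomization does not damage the $\delta$-independence you have already established. With that one substitution your argument closes.
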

Informally, this lemma states that if $G$ is an infinite graph with
bounded degrees, then there exists an agent that eventually has $k$
almost-independent estimates of $S$ with quality close to $p^*$, the
minimal probability of learning.

\begin{proof}
  In this proof we use the term ``independent'' to mean ``independent
  conditioned on $S$''.

  We choose an arbitrary $d$ and prove by induction on $k$. The basis
  $k = 0$ is trivial.  Assume the claim holds for $k$, any $G \in
  \InfGraphs_d$ and all $\eps,\delta>0$. We shall show that it holds
  for $k+1$, any $G \in \InfGraphs_d$ and any $\delta,\eps>0$.

  By the inductive hypothesis for every $G \in \InfGraphs_d$ there
  exists a vertex in $G$ that has $k$ $(\delta/100,\eps)$-good
  estimators $(M_1, \ldots, M_k)$.

  Now, having $k$ $(\delta/100,\eps)$-good estimators is a {\em local
    property} (Claim~\ref{clm:good-est-local}).  We now therefore
  apply Lemma~\ref{lemma:local_property}: since every graph $G \in
  \InfGraphs_d$ has a vertex with $k$ $(\delta/100,\eps)$-good
  estimators, any graph $G \in \InfGraphs_d$ has a time $t_k$ for
  which infinitely many distinct vertices $\{j_r\}$ have $k$
  $(\delta/100,\eps)$-good estimators measurable at time $t_k$.

  In particular, if we fix an arbitrary $i_0 \in G$ then for every $r$
  there exists a vertex $j \in G$ that has $k$
  $(\delta/100,\eps)$-good estimators and whose distance $d(i_0,j)$
  from $i_0$ is larger than $r$.

  We shall prove the lemma by showing that for a vertex $j$ that is
  far enough from $i_0$ which has $(\delta/100,\eps)$-good estimators
  $(M_1,\ldots,M_k)$, it holds that for a time $t_{k+1}$ large enough
  $(M_1,\ldots,M_k,C^j_{t_{k+1}})$ are $(\delta,\eps)$-good estimators.

  By Lemma~\ref{lemma:r-independent} there exists an $r_\delta$ such
  that if $r>r_\delta$ and $d(i_0,j) > 2r$ then $\psignal(B_r(G,j))$
  is $\delta/100$-independent of $\action_\infty$.  Let $r^* =
  \max\{r_\delta,t_k\}$, where $t_k$ is such that there are infinitely
  many vertices in $G$ with $k$ good estimators measurable at time
  $t_k$.

  Let $j$ be a vertex with $k$ $(\delta/100,\eps)$-good estimators
  $(M_1,\ldots,M_k)$ at time $t_k$, such that $d(i_0,j) > 2r^*$.
  Denote
  \begin{align*}
    \bar{M}=(M_1, \ldots, M_k).
  \end{align*}
  Since $d(i_0,j) > 2r_\delta$, $\psignal(B_{r^*}(G,j))$ is
  $\delta/100$-independent of $\action_\infty$, and since
  $B_{t_k}(G,j) \subseteq B_{r^*}(G,j)$, $\psignal(B_{t_k}(G,j))$ is
  $\delta/100$-independent of $\action_\infty$. Finally, since
  $\bar{M} \in \sigma(\info^j_{t_k})$, $\bar{M}$ is a function of
  $\psignal(B_{t_k}(G,j))$, and so by
  Claim~\ref{clm:function-independent} we have that $\bar{M}$ is also
  $\delta/100$-independent of $\action_\infty$.

  For $t_{k+1}$ large enough it holds that
  \begin{itemize}
  \item $\estL^j_{t_{k+1}}$ is equal to $\action_\infty$ with
    probability at least $1-\delta/100$, since
    \begin{align*}
      \lim_{t \to \infty} \P{\estL^j_t=\action_\infty} = 1,
    \end{align*}
     by Claim~\ref{lemma:L-estimate}.
  \item Additionally, $\P{C^j_{t_{k+1}}=S} > p^*-\eps$, since
    \begin{align*}
      \lim_{t \to \infty}\P{C^j_t=S} = p \geq p^*,
    \end{align*}
    by Claim~\ref{claim:b-a-equiv}.
  \end{itemize}

  We have then that $(\bar{M},\action_\infty)$ are $\delta/100$-independent and
  $\P{\estL^j_{t_{k+1}} \neq \action_\infty} \leq \delta/100$.
  Claim~\ref{clm:delta-independent} states that if $(A,B)$ are
  $\delta$-independent $\P{B \neq C} \leq \delta'$ then $(A,C)$ are
  $\delta+2\delta'$-independent. Applying this here we get that
  $(\bar{M},\estL^j_{t_{k+1}})$ are $\delta/25$-independent.

  It follows by application of Claim~\ref{clm:delta-ind-additive} that
  $(M_1,\ldots,M_k,\estL^j{t_{k+1}})$ are $\delta$-independent.  Since
  $C^j_{t_{k+1}}$ is a function of $\estL^j_{t_{k+1}}$ and an
  independent bit, it follows by another application of
  Claim~\ref{clm:function-independent} that $(M_1, \ldots, M_k,
  C^j_{t_{k+1}})$ are also $\delta$-independent.

  Finally, since $\P{C^j_{t_{k+1}}=S} > p^*-\eps$, $j$ has the $k+1$
  $(\delta,\eps)$-good estimators $(M_1,\ldots,C^j_{t_{k+1}})$ and the
  proof is concluded.

\end{proof}
\subsubsection{Asymptotic learning}
As a tool in the analysis of finite graphs, we would like to prove
that in infinite graphs the agents learn the correct state of the
world almost surely.

\begin{theorem}
  \label{thm:bounded}
  Let $G=(V,E)$ be an infinite, connected undirected graph with
  bounded degrees (i.e., $G$ is a general graph in $\InfGraphs$). Then
  $p(G)=1$.
\end{theorem}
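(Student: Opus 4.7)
The strategy is proof by contradiction, combining the pessimal-graph lemma (Lemma~\ref{lemma:h-exists}), the many-independent-estimators lemma (Lemma~\ref{thm:independent-bits}), and the three-bit majority amplification lemma (Lemma~\ref{cor:majority}). Suppose for contradiction that $p^* = p^*(\mu_0,\mu_1) < 1$. By Lemma~\ref{lemma:h-exists}, there exists $H \in \InfGraphs$ attaining $p(H) = p^*$; since $p^i_0 = \tfrac12 + \tfrac12 D_{TV}(\mu_0,\mu_1)$ is a fixed positive bias depending only on the signals, Claim~\ref{clm:pGreaterThanHalf} gives the uniform bound $p^* > \half$. Thus $p^* \in (\half,1)$, which is exactly the regime in which Lemma~\ref{cor:majority} applies.

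Invoking Lemma~\ref{cor:majority} with $p = p^*$ produces constants $\delta = \delta(p^*) > 0$ and $\eta = \eta(p^*) > 0$ with the promised boosting property. Apply Lemma~\ref{thm:independent-bits} to $H$ with these $\delta$, with $k=3$, and with $\eps = \eta/2$: we obtain a vertex $i \in H$, a time $t$, and random variables $M_1,M_2,M_3 \in \sigma(\info^i_t)$ such that $(M_1,M_2,M_3)$ are $\delta$-independent conditioned on $S$ and $\P{M_j = S} > p^* - \eta/2$ for each $j$. (If some $\P{M_j = S} = 1$ already, then $p^i_t = 1$ and hence $p^* = p(H) = 1$, contradicting the hypothesis.) These $M_j$ therefore satisfy the hypotheses of Lemma~\ref{cor:majority}, so the MAP estimator $a(M_1,M_2,M_3)$ of $S$ based on the three bits satisfies
\[
\P{a(M_1,M_2,M_3) = S} > p^*.
\]

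Now $\action^i_t$ is by definition the MAP estimator of $S$ given the full information set $\info^i_t$, and $(M_1,M_2,M_3) \in \sigma(\info^i_t)$, so $a(M_1,M_2,M_3)$ is one $\sigma(\info^i_t)$-measurable estimator among those that $\action^i_t$ dominates. Hence $p^i_t = \P{\action^i_t = S} \geq \P{a(M_1,M_2,M_3)=S} > p^*$. On the other hand, Claim~\ref{clm:pMonotone} and Claim~\ref{clm:pG} give $p^i_t \leq p^i_\infty = p(H) = p^*$, a contradiction. Therefore $p^* = 1$, and by definition of $p^*$ this forces $p(G) = 1$ for every $G \in \InfGraphs$, including the $G$ in the statement.

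The main obstacle is conceptual rather than computational: one must correctly line up the quantifiers, choosing $\delta$ and $\eta$ from Lemma~\ref{cor:majority} \emph{first} (at the specific value $p = p^*$), and only then feeding those same parameters back into Lemma~\ref{thm:independent-bits} to produce estimators precise enough and independent enough to trigger the majority boost. Once this matching is made, the whole proof reduces to observing that an agent on the worst infinite graph could strictly beat $p^*$ by running majority on three good estimators available to it, which is absurd.
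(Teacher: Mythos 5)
Your proposal is correct and follows essentially the same route as the paper's proof: contradiction via the pessimal graph $H$ from Lemma~\ref{lemma:h-exists}, three almost-independent good estimators from Lemma~\ref{thm:independent-bits}, and the majority boost of Lemma~\ref{cor:majority} to beat $p^*$, contradicting $p(H)=p^*$. Your version is in fact slightly more careful than the paper's, since you make the order of quantifiers explicit (fixing $\delta,\eta$ at $p=p^*$ before invoking the independent-bits lemma) and handle the edge case $\P{M_j=S}=1$, where the paper only writes ``for the appropriate choice of low enough $\eps,\delta$.''
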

Note that an alternative phrasing of this theorem is that $p^*=1$.
\begin{proof}
  Assume the contrary, i.e. $p^*<1$. Let $H$ be an infinite, connected
  graph with bounded degrees such that $p(H) = p^*$, such as we have
  shown exists in Lemma~\ref{lemma:h-exists}.

  By Lemma~\ref{thm:independent-bits} there exists for arbitrarily
  small $\eps,\delta >0 $ a vertex $w \in H$ that has access at some
  time $T$ to three $\delta$-independent estimators (conditioned on
  $S$), each of which is equal to $S$ with probability at least
  $p^*-\eps$. By Claims~\ref{cor:majority}
  and~\ref{clm:pGreaterThanHalf}, the MAP estimator of $S$ using these
  estimators equals $S$ with probability higher than $p^*$, for the
  appropriate choice of low enough $\eps,\delta$. Therefore, since
  $j$'s action $\action^j_t$ is the MAP estimator of $S$, its
  probability of equaling $S$ is $\P{\action^j_t=S} > p^*$ as well,
  and so $p(H) > p^*$ - contradiction.
\end{proof}

Using Theorem~\ref{thm:bounded} we can now prove
Theorem~\ref{thm:bounded-learning}, which is the corresponding
theorem for finite graphs:
\begin{proof}[Proof of Theorem~\ref{thm:bounded-learning}]
  Assume the contrary. Then there exists a series of graphs $\{G_r\}$
  with $r$ agents such that $\lim_{r \to \infty}\P{\action_\infty(G_r)=S} < 1$,
  and so also $\lim_{r \to \infty}p(G_r) < 1$.

  By the same argument of Theorem~\ref{thm:bounded} these graphs must
  all be in $\InfGraphs_d$ for some $d$, since otherwise, by
  Lemma~\ref{thm:large-out-deg-graph}, there would exist a subsequence
  of graphs $\{G_{r_d}\}$ with degree at least $d$ and $\lim_{d \to
    \infty}p(G_{r_d}) = 1$. Since $\InfGraphs_d$ is compact
  (Lemma~\ref{lemma:inf-graphs-closed}), there exists a graph $(G,i)
  \in \InfGraphs_d$ that is the limit of a subsequence of $\{(G_r,
  i_r)\}_{r=1}^\infty$.

  Since $G$ is infinite and of bounded degree, it follows by
  Theorem~\ref{thm:bounded} that $p(G) = 1$, and in particular
  $\lim_{r \to \infty}p^i_\infty(r) = 1$. As before, $p_{i_r}(r) =
  p^i_\infty(r)$, and therefore $\lim_{r \to \infty}p_{i_r}(r) =
  1$. Since $p(G_r) \geq p_{i_r}(r)$, $\lim_{r \to \infty}p(G_r) = 1$,
  which is a contradiction.
\end{proof}

\subsection{Example of Non-atomic private beliefs leading to
  non-learning}
\label{app:example-atomic}

We sketch an example in which private beliefs are atomic and
asymptotic learning does not occur.

\begin{example}
  \label{example:non-learning}
  Let the graph $G$ be the undirected chain of length $n$, so that
  $V=\{1, \ldots, n\}$ and $(i,j)$ is an edge if $|i-j| = 1$. Let the
  private signals be bits that are each independently equal to $S$
  with probability $2/3$. We choose here the tie breaking rule under
  which agents defer to their original signals\footnote{We conjecture
    that changing the tie-breaking rule does not produce asymptotic
    learning, even for randomized tie-breaking.}.
\end{example}

We leave the following claim as an exercise to the reader.
\begin{claim}
  If an agent $i$ has at least one neighbor with the same private
  signal (i.e., $\psignal_i=\psignal_j$ for $j$ a neighbor of $i$)
  then $i$ will always take the same action $\action^i_t =
  \psignal_i$.
\end{claim}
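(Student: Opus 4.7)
The plan is to prove the claim by strong induction on $t$, simultaneously establishing that every agent $i'$ having a neighbor $j'$ with $\psignal_{i'} = \psignal_{j'}$ satisfies $\action^{i'}_{t'} = \psignal_{i'}$ for every $t' \leq t$. The base case $t = 0$ is immediate from the definition $\action^{i'}_0 = \psignal_{i'}$.

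For the inductive step at time $t$, fix such a ``matched'' agent $i$ with matched neighbor $j$, and without loss of generality take $\psignal_i = \psignal_j = 1$. I work with the log-likelihood ratio $\llr^i_t = \llr^i_0 + Y^i_t$ given by Claim~\ref{clm:llr-decomp}. Under the Bernoulli signal distribution with $\delta = 1/6$, we have $\llr^i_0 = \log 2$, so it suffices to show that $Y^i_t \geq -\log 2$; any tie then resolves to $\psignal_i = 1$ by the chosen tie-breaking rule.

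The core mechanism is the following blocking property: by the inductive hypothesis applied to $j$ (which is matched to $i$), $\action^j_{t'} = 1$ for all $t' < t$. Hence $\action^j_0 = 1$ reveals $\psignal_j = 1$ and contributes $+\log 2$ to $Y^i_t$, while the subsequent deterministic observations $\action^j_{t'} = 1$ for $t' \geq 1$ are forced by the inductive hypothesis and contribute nothing new to agent $i$'s posterior. For the other neighbor $k$ of $i$ (if any), the analogous analysis applies whenever $k$ is itself matched; only an unmatched neighbor $k$ can relay information from signals further down the chain, and even then only up to the first matched pair on $k$'s side, past which the same blocking property halts propagation.

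The main obstacle is to bound the cumulative contribution to $Y^i_t$ of observations coming through an unmatched neighbor $k$. I would address this via a secondary inductive analysis along the chain of consecutive unmatched agents on $k$'s side, exploiting two facts: first, consecutive unmatched positions in a chain force alternating signals, so the Bernoulli $\pm \log 2$ log-likelihood contributions from those positions telescope; and second, the $+2\log 2$ buffer from $\psignal_i$ and $\psignal_j$, combined with the defer-to-signal tie-breaking rule, absorbs the residual imbalance. Making this bookkeeping precise---including the careful handling of equalities to maintain consistency between the tie-breaking convention used by intermediate agents and the main inductive hypothesis---is the delicate part of the proof.
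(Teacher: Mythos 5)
The paper does not actually prove this claim---it is explicitly left as an exercise---so there is no official argument to compare against; I evaluate your proposal on its own terms. Your skeleton is the right one, and your quantitative accounting is correct: with $\delta=1/6$ each revealed signal contributes $\pm\log 2$ to the log-likelihood ratio, the matched pair $(\psignal_i,\psignal_j)$ supplies a $+2\log 2$ buffer, the signals on the unmatched side are forced to alternate until the first matched pair there, and in the worst case (e.g.\ $\psignal_{i-1}=\psignal_{i-2}=1-\psignal_i$, or a longer alternating stretch terminating in a matched pair of the wrong sign) the total log-likelihood ratio is exactly $0$, so the defer-to-signal tie-breaking rule is genuinely load-bearing. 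All of that is correct and is the right picture.

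However, the proposal stops short of a proof precisely at the step where the difficulty lives, and I want to flag two concrete gaps in what you describe as ``the delicate part.'' First, your primary induction hypothesis only constrains the actions of \emph{matched} agents, but to bound $Y^i_t$ you must control what agent $i$ learns through an \emph{unmatched} neighbor, whose actions are MAP estimates built from its own evolving information set. The statement you actually need is a measurability claim of the form $\info^i_t\subseteq\sigma\bigl(\psignal_j : j \in D_i\bigr)$, where $D_i$ is the interval running out to the first matched pair on each side; establishing this requires a simultaneous induction over \emph{all} agents and times (matched and unmatched), and is a strictly larger claim than the one being proved---your ``secondary inductive analysis'' is really the main induction. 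Second, even granting that measurability, $\info^i_t$ is in general a strict coarsening of $\sigma(\psignal_j: j\in D_i)$, so the bound ``signed count of revealed signals $\geq 0$'' must be transferred to $\belief^i_t$ via $\belief^i_t=\CondE{\CondP{S=1}{\psignal_j : j\in D_i}}{\info^i_t}$; this works because the interval $D_i$ is itself determined by the signals it contains and the pointwise bound holds on every realization consistent with $i$ being matched, but none of this is automatic from the raw decomposition $\llr^i_t=\llr^i_0+Y^i_t$. Because the worst case is an exact tie, there is no slack to absorb imprecision at either of these steps, so the bookkeeping you defer is not routine verification but the substance of the argument.
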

Since this happens with probability that is independent of $n$, with
probability bounded away from zero an agent will always take the wrong
action, and so asymptotic learning does not occur. It is also clear
that optimal action sets do not become common knowledge, and these
fact are indeed related.

\bibliographystyle{abbrv} \bibliography{survey}

\end{document}